\renewcommand{\baselinestretch}{1.3}
\newcommand{\bqa}{\begin{eqnarray*}}
\newcommand{\eqa}{\end{eqnarray*}}
\newcommand{\bqan}{\begin{eqnarray}}
\newcommand{\eqan}{\end{eqnarray}}
\newcommand{\bqt}{\begin{quote}}
\newcommand{\eqt}{\end{quote}}
\newcommand{\bt}{\begin{tabbing}}
\newcommand{\et}{\end{tabbing}}
\newcommand{\bit}{\begin{itemize}}
\newcommand{\eit}{\end{itemize}}
\newcommand{\ben}{\begin{enumerate}}
\newcommand{\een}{\end{enumerate}}
\newcommand{\beq}{\begin{equation}}
\newcommand{\eeq}{\end{equation}}
\newcommand{\bdefi}{\begin{definition}}
\newcommand{\edefi}{\end{definition}}
\newcommand{\bpro}{\begin{proposition}}
\newcommand{\epro}{\end{proposition}}
\newcommand{\blem}{\begin{lemma}}
\newcommand{\elem}{\end{lemma}}
\newcommand{\bth}{\begin{theorem}}
\newcommand{\bco}{\begin{corollary}}
\newcommand{\eco}{\end{corollary}}
\newcommand{\bdes}{\begin{description}}
\newcommand{\edes}{\end{description}}
\newcommand{\bre}{\begin{remark}}
\newcommand{\ere}{\end{remark}}
\newtheorem{definition}{Definition}
\newtheorem{proposition}{Proposition}
\newtheorem{lemma}{Lemma}
\newtheorem{theorem}{Theorem}
\newtheorem{corollary}{Corollary}
\newtheorem{remark}{Remark}
\newcommand{\ep}{\varepsilon}
\newcommand{\sumi}{\sum_{i=1}^n}
\newcommand{\sumj}{\sum_{j=1}^n}
\newcommand{\IR}{\mbox{$I\!\!R$}}
\renewenvironment{proof}[1][Proof]{\noindent\textbf{#1.} }{\qed}
\renewcommand{\baselinestretch}{1.62}
\begin{document}

\def\baselinestretch{1.2}

\addtocounter{footnote}{3}
\author{ Enno Mammen\thanks{%
Institut f\"ur Angewandte Mathematik,
Universit\"at Heidelberg,
Im Neuenheimer Feld 205,
69120 Heidelberg,
Germany. E-mail address: \texttt{mammen@math.uni-heidelberg.de}.} \\
Heidelberg University, Germany 
 \and
 \addtocounter{footnote}{2}
 Ingrid Van Keilegom
 \thanks{
ORSTAT, KU Leuven, Naamsestraat 69, 3000 Leuven, Belgium. E-mail address: \texttt{%
ingrid.vankeilegom@kuleuven.be}. } \\
KU Leuven, Belgium
 \and
  \addtocounter{footnote}{1}
 Kyusang Yu
 \thanks{
Department of Applied Statistics, Konkuk University, Seoul 143-701, Korea, E-mail address: \texttt{
kyusangu@konkuk.ac.kr
 }.
 }\\
Konkuk University, Seoul, Korea
 }
\title{\textbf{EXPANSION FOR MOMENTS OF REGRESSION QUANTILES WITH APPLICATIONS TO NONPARAMETRIC TESTING }}
\maketitle

\begin{abstract}
We discuss nonparametric tests for parametric specifications of regression quantiles. The test is based on the comparison of parametric and nonparametric fits of these quantiles.  The nonparametric fit is a Nadaraya-Watson quantile smoothing estimator.

An asymptotic treatment of the test statistic requires the development of new mathematical arguments. An approach that makes only use of plugging in a Bahadur expansion of the nonparametric estimator is not satisfactory. It requires too strong conditions on the dimension and the choice of the bandwidth.

Our alternative mathematical approach requires the calculation of moments of Nadaraya-Watson quantile regression estimators. This calculation is done by application of higher order Edgeworth expansions. 

\end{abstract}

\noindent \textit{AMS 1991 subject classifications.} primary 62G07,
secondary 62G20

\noindent\textit{Journal of Economic Literature Classification}:\ C14

\noindent \noindent \textit{Keywords and phrases.} Nonparametric Regression; Quantiles; Bahadur Expansions; Kernel Smoothing; Nonparametric Testing; Goodness-of-fit tests.%
\vspace{0.1in}

\def\baselinestretch{1.3}

\section{Introduction}

Consider a data set of $n$ i.i.d.\ tuples $(X_i,Y_i)$, where $Y_i$ is a one-dimensional response variable and $X_i$ is a $d$-dimensional covariate. For $0<\alpha<1$ we denote the conditional $\alpha$-quantile of $Y_i$ given $X_i=x$ by  $m_\alpha(x)$.
Thus we can write
\begin{eqnarray}\label{model}
Y_i = m_\alpha(X_i) + \ep_{i,\alpha} \hspace*{.5cm} (i=1,\ldots,n),
\end{eqnarray}
with error variables $\ep_{i,\alpha}$ that fulfill $q_\alpha(\ep_{i,\alpha}|X_i) = 0$.  Here, $q_\alpha(\ep_{i,\alpha}|X_i)$ is the $\alpha$-quantile of the conditional distribution of $\ep_{i,\alpha}$ given $X_i$. Consider the null hypothesis
\begin{eqnarray} \label{H0}
H_0 : \mbox{For all } \alpha \in A \mbox{ there exists a } \theta(\alpha) \in \Theta, \mbox{ such that } m_\alpha = m_{\alpha,\theta(\alpha)},
\end{eqnarray}
where $\{m_{\alpha,\theta} : \theta \in \Theta\}$ is a parametric class of regression quantiles, $\Theta$ is a compact subset of $\IR^k$ and $A \subset (0,1)$.  The set $A$ can be a singleton $A=\{\alpha\}$, but can also be a closed subset of $(0,1)$ if a set of quantile functions is checked.

In this paper we aim at studying a test statistic for $H_0$, and to study its asymptotic properties under the null and the alternative.  We will see that this problem is an example of a quantile model where the asymptotics cannot be developed by standard tools of quantile regression. In particular, a direct application of Bahadur expansions requires assumptions that are too restrictive.

Our test statistic is based on kernel smoothing. Let $K(u_1,\ldots,u_d) = \prod_{j=1}^d k(u_j)$, where $k$ is a one-dimensional density function defined on $[-1,1]$, and let $h=(h_1,\ldots,h_d)$ be a $d$-dimensional bandwidth parameter.  We assume that all bandwidths $ h_1,\ldots,h_d$ are of the same order. For simplicity of notation we further assume that they are identical and by abuse of notation we write $h=  h_1= \ldots =h_d$. For any $0<\alpha<1$ and any $x$ in the support $R_X$ of $X$, let $F_{\varepsilon_\alpha|X}(\cdot|x) $ be the conditional distribution function of $\varepsilon_\alpha=Y-m_\alpha(X)$, given $X=x$, and let $r_{\alpha,\theta(\alpha)}(x)$ be the $\alpha$-quantile of $Y-m_{\alpha,\theta(\alpha)}(X)$ given that $X=x$. Define
$$ \widehat r_{\alpha}(x) = \arg \min_r \sum_{i=1}^n K\left ( {x-X_i \over h}\right ) \tau_{\alpha} (Y_i-m_{\alpha,\widehat \theta(\alpha)}(X_i)-r), $$
where $\tau_{\alpha} (u) = \alpha u_+ - (1-\alpha) u_-$, $u_+ = uI(u>0)$ and $u_- = uI(u<0)$ and where $\widehat \theta(\alpha)$ is an estimator of $\theta(\alpha)$. 

Note that instead of estimating the conditional quantile $r_{\alpha,\theta(\alpha)}(x)$ by the above estimator, we could have considered the alternative estimator
$$ \widehat r_{\alpha}^{alt}(x) = \arg \min_m \sum_{i=1}^n K\left ( {x-X_i \over h}\right ) \tau_{\alpha} (Y_i-m) - m_{\alpha,\widehat \theta(\alpha)}(x). $$
However, the latter estimator has the important drawback that the consideration of responses $Y_i$ in a neighborhood of $x$ induces a smoothing bias, whereas $\widehat r_{\alpha}(x)$ has no smoothing related bias, since it is based on the errors $Y_i-m_{\alpha,\theta(\alpha)}(X_i)$, whose conditional quantile of order $\alpha$ is exactly zero under $H_0$ for all $X_i$.


We suppose that $A$ is a closed subinterval of $(0,1)$. We define the following test statistic :
\begin{eqnarray} \label{TA}
&& \widehat T_A = \int_A \int_{R_X} \widehat r_{\alpha}(x)^2 w(x,\alpha) dx d\alpha,
\end{eqnarray}
for some weight function $w(x,\alpha)$.
For the case that $A$ contains only one value $\alpha$ we use
\begin{eqnarray} \label{TA2}
&&\widehat T_\alpha = \int _{R_X}\widehat r_{\alpha}(x)^2 w(x) dx \end{eqnarray}
for some weight function  $w(x)$. One could also generalize our results to the case that $A$ is a finite set. To keep notation simple we omit this case in our mathematical analysis.

Our test is an omnibus test that has power against all types of alternatives. It is based on the comparison of a kernel quantile estimator with the parametric fit. We will show that the test statistic is asymptotically equivalent to a weighted $L_2$-distance between the nonparametric and the parametric estimator. Similar tests have been used in a series of papers for mean regression. Early references are H\"ardle and Mammen (1993), Gonz\'alez-Manteiga and Cao-Abad (1993), Hjellvik, Yao and Tj\o stheim (1998), Zheng (1996) and Fan,  Zhang and Zhang (2001). Furthermore recent references are  Dette and Sprekelsen (2004),   Kreiss, Neumann and Yao (2008), Haag (2008), Leucht (2012), Gao and Hong (2008) and Ait-Sahalia, Fan and Peng (2009). Most of the more recent work concentrates on time series data.

The classical way to carry over results from parametric and nonparametric mean regression to quantile regression is the use of Bahadur expansions. The main point is that asymptotically quantile regression is equivalent to weighted mean regression. This approach has been used in Chaudhuri (1991), Truong (1989), He and Ng (1999), He, Ng and Portnoy (1998) and more recently in Hoderlein and Mammen (2009), Hong (2003), Kong, Linton and  Xia (2010), Lee and Lee (2008), El Ghouch and Van Keilegom (2009), Li and Racine (2008), and De Backer, El Ghouch and Van Keilegom (2017). A detailed review of quantile regression can be found in the book by Koenker (2005). Testing procedures in quantile regression were considered in Zheng (1998), Koenker and Machado (1999), Bierens and Ginther (2001), Horowitz and Spokoiny (2002), Koenker and Xiao (2002), and He and Zhu (2003), among others.  They all considered tests for the parametric form of the quantile function.
More recently, Rothe and Wied (2013) proposed a test statistic for the hypothesis that the conditional distribution belongs to a certain parametric class. 
Tests based on quantiles of the errors have also been considered in Su and White (2012) in the context of testing conditional independence.
Other recent papers are the ones by Volgushev et al.\ (2013) and Conde-Amboage, S\'anchez-Sellero and Gonz\'alez-Manteiga (2015), who considered significance tests in quantile regression and developed a test statistic based on marked empirical processes.

In this paper we will discuss how results from mean regression carry over to our case. Whereas elsewhere a first attempt could be based on the application of a Bahadur expansion, we will see that in our setting the accuracy of a direct application of Bahadur expansions is too poor. We will shortly explain this here for the testing problem where $A$ contains only one value $\alpha$.
Suppose for simplicity at this stage that the parametric model contains only one value $\theta_0=\theta_0(\alpha)$ and that
$\widehat \theta = \widehat \theta(\alpha) = \theta_0$. The Bahadur expansion of $\widehat r_{\alpha}(x)$ is given by
\begin{equation} \label{addhd} \widetilde r_{\alpha}(x)= -{\sumi K\Big(\frac{x-X_i}{h}\Big) \{I(\varepsilon_{i,\alpha}\leq 0) - \alpha\} \over \sumi K\Big(\frac{x-X_i}{h}\Big) f_{\varepsilon_\alpha|X}(0|X_i)}, \end{equation} where $f_{\varepsilon_\alpha|X}$ is the conditional density of $\varepsilon_\alpha$ given $X$.
This gives the following approximation for $\widehat T_\alpha$:
$$\widetilde T_\alpha = \int _{R_X}\widetilde r_{\alpha}(x)^2 w(x) dx.$$
One can show that up to a logarithmic factor $\sup_x |\widehat r_{\alpha}(x)-\widetilde r_{\alpha}(x)|$ and $\sup_x |\widehat r_{\alpha}(x)|$ are of order $(nh^d)^{-3/4}$ and $(nh^d)^{-1/2}$, respectively. This implies that up to a logarithmic factor, the difference  $\widehat T_\alpha - \widetilde T_\alpha $ is of order $(nh^d)^{-5/4}$. On the other hand as it is also the case in mean regression $\widetilde T_\alpha$ is equal to the sum of a deterministic term and a random term of order $n^{-1} h^{-d/2}$. Thus the above approximation only helps if $(nh^d)^{-5/4} << n^{-1} h^{-d/2}$ or equivalently if $nh^{3d} \to \infty$ for sample size $n$ going to $\infty$.  E.g.\ if one applies a bandwidth $h \sim n^{-1/(4+d)}$ that leads to rate optimal estimation of twice differentiable functions this assumption would allow only a one-dimensional setting $d=1$. Also in the case of minimax optimal testing with twice differentiable functions under the alternative (see Ingster (1993) and Guerre and Lavergne (2002)), the optimal bandwidth $h \sim n^{-2/(8+d)}$ is only allowed for dimension $d=1$. In this paper we develop an asymptotic theory for $L_2$-type quantile tests that works under the assumption that $nh^{3d/2} \to \infty$. In the above examples this allows dimensions $d\leq 7$ and $d\leq 3$. Furthermore, for our asymptotic discussion of the distribution of the test statistic on the hypothesis we only need the assumption that $nh^d \to \infty$. Thus on the hypothesis, our basic assumptions coincide with conditions needed for the asymptotics  of mean regression. We conjecture that also for the alternative the assumption $nh^{3d/2} \to \infty$ could be weakened but that then the asymptotic mean of the test statistic changes. We will comment on this after the statement of Theorem \ref{test}.

In our approach we will make use of the fact that Bahadur expansions of  kernel quantile estimators calculated at two different points are asymptotically independent if they are calculated at points that are such that the supports of the kernels do not overlap.  
Thus the variance of an integral over a Bahadur expansion should be of smaller order than the variance of the Bahadur expansion at a fixed point. The main technical difficulty that will come up when applying this idea  is the need to calculate moments of the kernel regression quantiles. We will introduce a method for the expansion of such moments that is based on Edgeworth expansions in a related problem. Our main result gives a bound between the moments of kernel regression quantiles and the moments of its Bahadur approximation.

The paper is organized as follows. In the next section we will state our result on moments of kernel regression quantiles. Our main result on the asymptotics of $L_2$-type quantile tests is given in Section 3.  We will also introduce some kind of wild bootstrap procedure adapted to quantile regression and give a theoretical result on its consistency. 
In Section 4 we present the results of a simulation study, and we analyze data on Engel curves. The proofs are postponed to the last three sections.

\section{Asymptotic moments}
In this section we will present an asymptotic result on higher order moments of kernel regression quantiles. This result will be our most important ingredient for getting our result on the asymptotic distribution of our test statistic. In our result the moments of kernel regression quantiles are compared with the moments of their Bahadur approximations. Recall that we are interested in the null hypothesis $H_0$ defined in (\ref{H0}).   We suppose that for all $\alpha \in A$,
\begin{eqnarray} \label{expm}
m_\alpha (\cdot) = m_{\alpha,\theta_0(\alpha)}(\cdot) + n^{-1/2} h^{-d/4} \Delta_\alpha(\cdot).
\end{eqnarray}
For the case $\Delta_\alpha\equiv 0$ the function $m_\alpha$ lies on the hypothesis. In order to develop our asymptotic theory, we need to work under the following assumptions. In the formulation of the assumptions and in the proofs we use the convention that $C, C_1, C_2, ... $ are generic strictly positive constants that are chosen large  enough, that $c, c_1, c_2, ... $ are generic strictly positive constants that are chosen small enough, and that $C^*, C^*_1, C^*_2, ... $ are generic strictly positive constants that are arbitrarily chosen. Using this convention we write $L_n= (\log n) ^C$ for a sequence with $C>0$ large enough and $L^*_n= (\log n) ^{C^*}$ for a sequence with an arbitrarily chosen constant $C^*>0$. All these variable names are used for different constants and sequences, even in the same equation.

We will make use of the following assumptions.

\begin{itemize}

\item[(B1)] The support $R_X$ of $X$ is a compact convex subset of $\IR^d$. The density $f_X$ of $X$ is bounded and bounded away from zero on $R_X$. The function $\Delta_\alpha$ is uniformly absolutely bounded for $\alpha \in A$.

\item[(B2)] The conditional distribution of $\varepsilon_{\alpha}$ given $X=x$ allows a density $f_{\varepsilon_{\alpha}|X}(e|x)$ that is twice differentiable with respect to $e$. For this derivative it holds that $|f^{\prime \prime} _{\varepsilon_{\alpha}|X}(e|x)| \leq C$ for $|e| \leq c$,  $x \in R_X$, and  $\alpha \in A$.  The density $f_{\varepsilon_{\alpha}|X}(e|x)$ also satisfies $f_{\varepsilon_{\alpha}|X}(e|x) > 0$ and 
$|f_{\varepsilon_{\alpha}|X}(e^{\prime}|x^{\prime})-f_{\varepsilon_{\alpha}|X}(e|x)|  \leq C (\|x^{\prime}-x\| + |e^{\prime}-e|)$ for $x,x^{\prime}\in R_X$ and $e,e^{\prime} \in \mathbb{R}$, where $\|\cdot\|$ is the Euclidean norm.  Moreover, the functions $f_X(x)$, $m_\alpha(x)$ and $\Delta_\alpha(x)$ are continuously differentiable with respect to $x$.  

\item[(B3)] The bandwidth $h$ satisfies $h=o(1)$ and $nh^{d}/L_n ^* \rightarrow \infty$. The kernel $k$ is a symmetric, continuously differentiable probability density function with compact support, $[-1,1]$, say. It fulfills a Lipschitz condition and it is monotone strictly increasing on $[-1,0]$. It holds that $k^\prime (k^{-1} (u)) \geq \min c \{u^{\kappa}, (k(0) - u)^{\kappa}\}$ for some $0 \leq \kappa < 1$ where $k^{-1}: [0, k(0)] \to [-1,0]$ denotes the inverse of $k:[-1,0] \to [0, k(0)]$.
\end{itemize}

%
%
%
%
%

In our asymptotics, the density $f_X$ and the functions $\Delta_\alpha$ are fixed and do not depend on $n$. The cumulative distribution function $F(\cdot|x)$ of $Y$ given $X=x$ may depend on $n$. We do not indicate this in our notation.

Assumptions (B1)--(B3) are standard assumptions for the study of smoothing estimators, with the exception of the last assumption in (B3). We now shortly explain why this assumption is needed here.  For fixed $u$ and $x=(x_1,\ldots,x_d)^\intercal$, define the random vector $V_n= \sum_{j=1}^n \Big(k(\frac{x_1-X_{1,j}}{h}), ...,k(\frac{x_d-X_{d,j}}{h})\Big)^\intercal  \Big\{I(\varepsilon^\Delta_{j,\alpha} \le   \Delta_\alpha^h(x)  +u(nh^d)^{-1/2}) - \alpha\Big\}$ with $\varepsilon_{j,\alpha}^\Delta= \varepsilon_{j,\alpha} + n^{-1/2} h^{-d/4} \Delta_\alpha(X_j)$, and where $\Delta_\alpha^h(x)$ is defined in (\ref{Deltah}) below.  In the proof of the following Theorem  \ref{theolem4}
 we will develop Edgeworth expansions for the distribution of $V_n$. Typically, the summands of $V_n$ do not fullfil non-lattice type assumptions that are needed for the verification of Edgeworth expansions. But under (B3) a non-lattice assumption can be verified for the conditional distribution of a finite sum of summands of $V_n$. 
For more details we refer to the proof of Theorem \ref{theolem4}. The last assumption in (B3) can be easily verified. It just puts a simple bound on the derivative of $k^{-1}$. E.g., it can be easily checked for the triangle kernel and for all kernels of the form $k\left(z\right)=\mathbf{1}\left(\left|z\right|\leq1\right)c_r\left(1-z^2\right)^r$ with $r \geq 1$. In case that $k^\prime $ is bounded away from zero on bounded intervals of $(-1,0)$ the assumption follows if for some $l,l^*\in \mathbb N$, 
 it holds that $k^\prime(x)= (x+1)^l + o((x+1)^l)$ for $x\geq -1$ and $x+1$ small enough and that $k^\prime(x)=  - x^{2l^*+1} + o(x^{2l^*})$ for $x$ in a neighborhood of $0$. 

We put
\begin{eqnarray}  \label{rtildeadd}
 \widehat r^\Delta_{\alpha}(x) &= & \arg \min_r \sum_{i=1}^n K\left ( {x-X_i \over h}\right ) \tau_{\alpha} (\varepsilon^\Delta_{i.\alpha} -r),\\ \label{rtildeadd2}  \overline r_{\alpha}^{\Delta}(x) &= & \left\{
	\begin{array}{ll}
	\widehat r^\Delta_\alpha(x) & \mbox{ if } |\widehat r^\Delta_\alpha(x)| \le L_n (nh^d)^{-1/2} \\
	0 & \mbox{ otherwise}.
	\end{array}
	\right. \end{eqnarray}

	In the main result of this section we will consider conditional moments of the truncated kernel smoothing quantiles $ \overline r_{\alpha}^{\Delta}$, conditioned on the number of covariables falling into local neighborhoods. Note that, with positive probability, kernel smoothing quantiles are not defined because there is no covariable in the support of the kernel, with positive probability. Thus unconditional  moments are not defined. In the following theorem we will condition on local neighborhoods ${\cal N}^{-} (x) = \{ u: x_j-h \leq u_j \leq x_j +h \mbox{ for all } j=1,\ldots,d \}$ that are designed such that the result can be easily used for the asymptotic analysis of our test statistic in the next section. Note that $ {\cal N}^{-} (x)$ is the support of the kernel $h^{-d} K(h^{-1} [x- \cdot])$.  The theorem could also easily be stated with other local neighborhoods. The conditional moments of the truncated kernel smoothing quantiles $ \overline r_{\alpha}^{\Delta}$ will be compared with the conditional moments of the following modified Bahadur expansion, denoted by $ \widetilde r^\Delta_{\alpha}(x)$ :
	\begin{eqnarray} \label{rtilde} \widetilde r^\Delta_{\alpha}(x)&=& \widetilde r^{\Delta,-}_{\alpha}(x)
	+\Delta_\alpha^h(x), \end{eqnarray}
where $ \Delta_\alpha^h(x)$ is defined such that
\begin{eqnarray} \label{Deltah}
&& E^j_x \Big [K\Big(\frac{x-X_j}{h}\Big) \Big\{I(\varepsilon^\Delta_{j,\alpha} \le  \Delta_\alpha^h(x) ) - \alpha\Big\}\Big]=0.
\end{eqnarray}
Here $E_x^j$ denotes the conditional expectation, given that $j \in {\cal N^{-}}(x)$.	
Furthermore $\widetilde r^{\Delta,-}_{\alpha}(x)$ is defined as
	\begin{eqnarray*} \widetilde r^{\Delta,-}_{\alpha}(x)&=& -{\sumi K\Big(\frac{x-X_i}{h}\Big) \{I(\varepsilon^\Delta_{i,\alpha}\leq \Delta_\alpha^h(x)) - \alpha\} \over \sumi K\Big(\frac{x-X_i}{h}\Big) f_{\varepsilon_\alpha|X}( \Delta_\alpha^h(x,X_i) |X_i)}\\
	&=& -{\sumi K\Big(\frac{x-X_i}{h}\Big) \{I(\varepsilon_{i,\alpha}\leq \Delta_\alpha^h(x,X_i)) - \alpha\} \over \sumi K\Big(\frac{x-X_i}{h}\Big) f_{\varepsilon_\alpha|X}( \Delta_\alpha^h(x,X_i) |X_i)},  \end{eqnarray*}
	where $$
 \Delta_\alpha^h(x,X_j) =\Delta_\alpha^h(x) - n^{-1/2} h^{-d/4} \Delta_\alpha(X_j).$$
We have the following asymptotic result for the moments of kernel quantile estimators and their Bahadur approximations.
\medskip

	\begin{theorem} \label{theolem4}
Assume (B1)--(B3). Then, for natural numbers $l \geq 1$,
\begin{eqnarray}  \label{July3} E\Big\{\overline r_{\alpha}^{\Delta,-}(x)^{2l} - \widetilde r_{\alpha}^{\Delta,-}(x)^{2l} \Big| N^{-}(x)=m \Big\} = O(L_n (nh^d)^{-l-1}  ) ,\\
\label{July3add1} E\Big\{\overline r_{\alpha}^{\Delta,-}(x)^{2l-1} - \widetilde r_{\alpha}^{\Delta,-}(x)^{2l-1} \Big| N^{-}(x)=m \Big\} = O(L_n (nh^d)^{-l}  ) ,\end {eqnarray}
uniformly in $x \in R_X$, $\alpha \in A$ and $C_1^* nh^d\leq m \leq C_2^* nh^d$ where $N^{-}(x)$ is the random number of $X_i$'s that lie in ${\cal N}^{-} (x)$, and where
$ \overline r^{\Delta,-}_\alpha(x) =  \overline r^\Delta_\alpha(x) - \Delta_\alpha^h(x)$. For the second moments of  the uncentered estimators $\overline r_{\alpha}^{\Delta}$ and $ \widetilde r_{\alpha}^{\Delta}$  we have that
\begin{eqnarray}  \label{July3add} E\Big\{\overline r_{\alpha}^{\Delta}(x)^{2} - \widetilde r_{\alpha}^{\Delta}(x)^{2} \Big| N^{-}(x)=m \Big\} = O(L_n n^{-3/2}h^{-5d/4}   ).
\end {eqnarray}
Under the additional assumption that $\Delta_\alpha \equiv 0$ we get that 
\begin{eqnarray}  \label{July3addqq} E\Big\{\overline r_{\alpha}^{\Delta}(x)^{2} - \widetilde r_{\alpha}^{\Delta}(x)^{2} \Big| N^{-}(x)=m \Big\} = O(L_n (nh^d)^{-2}  ).
\end {eqnarray}
 \end{theorem}
	
We can apply the theorem when $\Delta_\alpha \equiv 0$, in which case $\Delta_\alpha^h \equiv 0$ and $\widetilde r_\alpha^\Delta(x) = \widetilde r_\alpha^{\Delta,-}(x)$ and $\overline r_\alpha^\Delta(x) = \overline r_\alpha^{\Delta,-}(x)$.  Hence, \eqref{July3} and \eqref{July3add1} hold with $\overline r_\alpha^{\Delta,-}(x)$ and $\widetilde r_\alpha^{\Delta,-}(x)$ replaced by $\overline r_\alpha^{\Delta}(x)$ and $\widetilde r_\alpha^{\Delta}(x)$.  In particular, for $l=1$ \eqref{July3addqq} follows directly from \eqref{July3}.

\section{Asymptotic theory}

 We suppose that there exists an estimator $\widehat \theta(\alpha)$ that converges to $\theta_0(\alpha)$. Hence, on the hypothesis the true value of $\theta(\alpha)$ is equal to $\theta_0(\alpha)$. On the alternative, $\theta_0(\alpha)$ may  depend on the chosen estimator $\widehat \theta(\alpha)$.

In order to develop the asymptotic distribution of $\widehat T_A$ and $\widehat T_\alpha$, we need the following additional assumptions.
\begin{itemize}
\item[(B4)] We assume that $$\sup_{ x \in R_X, \alpha \in A} | m_{\alpha,\widehat \theta(\alpha)}(x)  - m_{\alpha,\theta_0(\alpha)}(x)  - (\widehat \theta(\alpha) - \theta_0(\alpha)) ^\top \gamma_\alpha (x) | = O_P( n^{-{1\over 2}-c} )$$ for some function  $\gamma_\alpha (x)$. The  function $w(x)$ is continuous, and the functions $w(x,\alpha)$, $\gamma_\alpha (x)$ and $ \Delta_\alpha(x)$ are continuous with respect to $(\alpha,x)$. For $g(x) = w(x,\alpha)$ and $g(x) = w(x)$ it holds that $|g(x^{\prime})-g(x)|  \leq  C \|x^{\prime}-x\| $, and $|\gamma_\alpha(x^\prime)-\gamma_\alpha(x)| \le C \|x^{\prime}-x\|^{\delta} $ for some $0<\delta<1$ and for all $x,x^\prime \in R_X$ and all $\alpha \in A$.

\item[(B5)]  For some $\rho>0$ it holds that $$\sup_{\alpha \in A} \|\widehat \theta(\alpha) - \theta_0(\alpha)\| = O_P\left (  (n^{-1/2}h^{-d/4})/L_n^* \wedge (h^{-\delta} n^{-{1\over 2}-\rho} ) \wedge  (n^{-{1\over 4}}h^{d\over 4}/L_n^* )\right ).$$
\end{itemize}

The first assumption in (B4) can be shown under smoothness conditions on the relation $\theta \to m_{\alpha, \theta}(x) $. For the case that $A$ contains only one single element, this assumption in (B4) would  directly follow from (B5) and the assumption that $\theta \to m_{\alpha, \theta}(x) $ has a derivative that is continuous in  $x$. Assumption (B5) states that $\widehat \theta(\alpha)$ achieves at least a nearly parametric rate. In the case of linear quantile regression (i.e.\ $m_\alpha(X)=\theta(\alpha)^\top X$), such an assumption has been shown in Angrist et al (2006). Note that 
$\sup_{\alpha \in A} \|\widehat \theta(\alpha) - \theta_0(\alpha)\| = O_P\left ( n^{-1/2}\right )$ implies  (B5) if $\rho $ is chosen such that $h^{\delta} n^{\rho}\to 0$. Note that  $n^{-{1\over 2}} = o(n^{-{1\over 4}}h^{d\over 4}/L_n^*)$ because of 
$nh^d /L_n^*\to \infty$.

We now state our main result on the asymptotic distribution of our test statistics.

\begin{theorem} \label{test}
Assume (B1)-(B5). For the case that $\Delta_\alpha \not \equiv 0$ make the additional assumption that  $nh^{3d/2}/L_n ^* \rightarrow \infty$. Then,
\begin{eqnarray*} &&n h^{d/2} \widehat T_A - b_{h,A} \stackrel{d}{\rightarrow} N(D_A,V_A), \\
&&n h^{d/2} \widehat T_\alpha - b_{h,\alpha} \stackrel{d}{\rightarrow} N(D_\alpha,V_\alpha),
\end{eqnarray*}
where
\begin{eqnarray*}
D_A&=&\int_A \int_{R_X} \Delta_\alpha(x)^2 w(x,\alpha) \, dx \, d\alpha,\\
b_{h,A} &=& h^{-d/2} K^{(2)}(0)  \int_A \alpha(1-\alpha) \int_{R_X} \frac{w(x,\alpha)}{f_X(x) f^2_{\varepsilon_\alpha|X}(0|x)} dx \, d\alpha, \\
V_A &=& 4 K^{(4)}(0)  \int_{\alpha, \beta \in A,  \alpha < \beta} \alpha^2 (1-\beta)^2  \int _{R_X} \frac{w(x,\alpha) w(x,\beta)}{f_X^2(x) f_{\varepsilon_\alpha|X}^4(0|x)} dx \, d\alpha\ d\beta,
\end{eqnarray*}
\begin{eqnarray*}
D_\alpha&=& \int_{R_X} \Delta_\alpha(x)^2 w(x) \, dx ,\\
b_{h,\alpha} &=& h^{-d/2} K^{(2)}(0)   \alpha(1-\alpha) \int_{R_X} \frac{w(x)}{f_X(x) f^2_{\varepsilon_\alpha|X}(0|x)} dx, \\
V_{\alpha} &=& 4 K^{(4)}(0)  \alpha^2 (1-\alpha)^2  \int _{R_X} \frac{w^2(x)}{f_X^2(x) f_{\varepsilon_\alpha|X}^4(0|x)} dx,
\end{eqnarray*}
and where for any $j$, $K^{(j)}(0)$ denotes the $j$-times convolution product of $K$ at $0$.
\end{theorem}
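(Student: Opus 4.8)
The plan is to begin with a Bahadur-type linearization of $\widehat r_\alpha(x)$ and then to analyze the quadratic functional $\widehat T_A$ term by term, keeping careful track of the exact order of each piece. Writing the fitted residuals via (\ref{expm}) and (B4) as
\[
Y_i-m_{\alpha,\widehat\theta(\alpha)}(X_i)=\varepsilon_{i,\alpha}+n^{-1/2}h^{-d/4}\Delta_\alpha(X_i)-(\widehat\theta(\alpha)-\theta_0(\alpha))^\top\gamma_\alpha(X_i)+O_P(n^{-1/2-c}),
\]
the first-order condition for the local minimization defining $\widehat r_\alpha(x)$ gives
\[
\widehat r_\alpha(x)=n^{-1/2}h^{-d/4}\Delta_\alpha(x)-(\widehat\theta(\alpha)-\theta_0(\alpha))^\top\gamma_\alpha(x)+Z_\alpha(x)+R_\alpha(x),
\]
where, with $\psi_{i,\alpha}=\alpha-I(\varepsilon_{i,\alpha}<0)$,
\[
Z_\alpha(x)=\frac{1}{f_X(x)f_{\varepsilon_\alpha|X}(0|x)}\,\frac{1}{nh^d}\sum_{i=1}^n K\Big(\frac{x-X_i}{h}\Big)\psi_{i,\alpha}
\]
is the leading stochastic term and $R_\alpha(x)$ is the Bahadur remainder. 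Since the conditional $\alpha$-quantile of $\varepsilon_{i,\alpha}$ is $0$, we have $E[\psi_{i,\alpha}\mid X_i]=0$, so $Z_\alpha$ is conditionally centered; pointwise $Z_\alpha(x)=O_P((nh^d)^{-1/2})$, but $\int_{R_X}Z_\alpha(x)g(x)\,dx=O_P(n^{-1/2})$ for any fixed smooth $g$, because integrating against a smooth weight averages out the kernel localization. These two different rates drive everything that follows.

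Squaring and multiplying by $nh^{d/2}$ then separates the contributions by scale. The deterministic square produces the limiting mean: $nh^{d/2}\int_A\int_{R_X}(n^{-1/2}h^{-d/4}\Delta_\alpha)^2 w\,dx\,d\alpha=\int_A\int_{R_X}\Delta_\alpha^2 w\,dx\,d\alpha=D_A$. The pure parametric square, and every cross term pairing $Z_\alpha$ with either $\Delta_\alpha$ or $(\widehat\theta-\theta_0)^\top\gamma_\alpha$, are $o_P(1)$ for $c$ small enough, precisely because $\int Z_\alpha g\,dx=O_P(n^{-1/2})$ rather than the slower pointwise rate; here I would use (B5) for the root-$n$ size of $\widehat\theta(\alpha)-\theta_0(\alpha)$. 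Everything thus reduces to $S_A:=nh^{d/2}\int_A\int_{R_X}Z_\alpha(x)^2 w\,dx\,d\alpha$ (and the remainder, treated last). Expanding $Z_\alpha(x)^2$ as a double sum over $(i,j)$, I split it into the diagonal $i=j$ and the off-diagonal $i\neq j$. Using $E[\psi_{i,\alpha}^2\mid X_i]=\alpha(1-\alpha)$ and $\int K^2((x-u)/h)f_X(u)\,du\approx h^d K^{(2)}(0)f_X(x)$, the expectation of the diagonal part equals exactly the centering $b_{h,A}$, and a direct variance bound (of order $(nh^d)^{-1}$) shows the diagonal part concentrates at this mean.

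The off-diagonal part is a degenerate $U$-statistic $\sum_{i\neq j}H_{ij}$ with a symmetric, conditionally centered kernel, and it carries the asymptotic law. I would compute its variance using the conditional covariance
\[
E\big[\psi_{i,\alpha}\,\psi_{i,\beta}\mid X_i\big]=\min(\alpha,\beta)-\alpha\beta,
\]
which follows from the nestedness of the events $\{Y_i<m_\gamma(X_i)\}$ in $\gamma$, together with the localization $\int K((x-u)/h)K((x'-u)/h)\,du\approx h^d(K*K)((x-x')/h)$; the inner $x'$-integration then contributes $h^dK^{(4)}(0)$ through $\int (K*K)^2=K^{(4)}(0)$. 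After the symmetrization $\int_{A\times A}=2\int_{\alpha<\beta}$ this reproduces exactly $V_A$. Asymptotic normality follows from a martingale central limit theorem for degenerate $U$-statistics, once the usual negligibility of the conditional fourth moment is verified. The statement for $\widehat T_\alpha$ is the specialization with no outer $\alpha$-integration, in which the covariance collapses to the single value $\alpha(1-\alpha)$ and yields $b_{h,\alpha}$ and $V_\alpha$.

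The main obstacle, and the point that forces the new machinery announced in the introduction, is the control of the remainder contributions $nh^{d/2}\int_A\int_{R_X}(2Z_\alpha R_\alpha+R_\alpha^2)w\,dx\,d\alpha$ (and the analogous $\Delta_\alpha R_\alpha$ cross term). The naive pointwise Bahadur bound $R_\alpha(x)=O_P((nh^d)^{-3/4}L_n)$ makes the cross term $\int Z_\alpha R_\alpha w$ of order $n^{-1/4}h^{-3d/4}L_n$, which is $o_P(1)$ only under $nh^{3d}\to\infty$ --- exactly the condition we wish to avoid. To reach the weaker assumption $nh^{3d/2}\to\infty$ of (B3), I would not bound $R_\alpha$ pointwise but instead estimate the moments of the \emph{integral} $\int_{R_X}Z_\alpha(x)R_\alpha(x)w(x,\alpha)\,dx$ directly, exploiting that Bahadur expansions evaluated at points more than $2h$ apart are asymptotically independent, so that integration produces additional cancellation and a genuinely smaller order than the product of the pointwise rates. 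Making this precise is where the higher-order Edgeworth expansions enter: one inverts the defining quantile relation (expressing deviations of $\widehat r_\alpha(x)$ through a smoothed count of the $\varepsilon_{i,\alpha}$) and expands the relevant probabilities to the order needed to bound $E[(\int Z_\alpha R_\alpha w\,dx)^2]$ and the corresponding moment of $\int R_\alpha^2 w\,dx$. This moment-based estimation of the Bahadur remainder, rather than any single step of the $U$-statistic central limit argument, is the true crux of the proof.
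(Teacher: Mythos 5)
Your overall architecture matches the paper's: the main quadratic term in the linearization is handled exactly as in the paper's Lemma \ref{lem9} (diagonal terms give $b_{h,A}$, the off-diagonal degenerate $U$-statistic gives the normal limit via de Jong's CLT, with the covariance $\alpha(1-\beta)$ producing $V_A$), the parametric and $\Delta_\alpha$ cross terms are killed by the $O_P(n^{-1/2})$ rate of $\int Z_\alpha g$, and you correctly identify that the crux is the remainder and that it must be attacked through moments of integrated quantities, using Edgeworth expansions of the inverted quantile relation together with independence of blocks separated by more than $2h$. All of that is faithful to the paper.

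There is, however, a genuine gap in how you organize the moment computation for the remainder. You propose to bound $E\bigl[\bigl(\int Z_\alpha R_\alpha w\,dx\bigr)^2\bigr]$ and ``the corresponding moment of $\int R_\alpha^2 w\,dx$'' separately. By the Bahadur--Kiefer phenomenon the pointwise rate $(nh^d)^{-3/4}$ for $R_\alpha$ is sharp, so $E[R_\alpha(x)^2]\asymp (nh^d)^{-3/2}$ and $nh^{d/2}\int E[R_\alpha^2]w\,dx\asymp n^{-1/2}h^{-d}$, which under (B3) (only $nh^{3d/2}\to\infty$) need not vanish; correspondingly $E[Z_\alpha(x)R_\alpha(x)]$ is of the same order $(nh^d)^{-3/2}$ with a negative sign, so the far-off-diagonal part of $E[(\int Z_\alpha R_\alpha w)^2]$ is also too large. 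The two contributions only become small enough \emph{together}: $2E[Z_\alpha R_\alpha]+E[R_\alpha^2]=E[\widehat r_\alpha^2]-E[Z_\alpha^2]$ (up to the deterministic shifts), and it is this packaged difference of two \emph{marginal} second moments that the Edgeworth expansion of $P(\widehat r_\alpha^-(x)\le um_0^{-1/2}\,|\,N^-(x))$ can actually deliver --- note that the inversion (\ref{start}) gives you the law of $\widehat r$, not the joint law of $Z_\alpha$ and $R_\alpha$, so the cross moment $E[Z_\alpha R_\alpha]$ is in any case only accessible through this difference-of-squares identity. This is precisely the paper's Lemma \ref{lem4}: the conditional mean of $\widehat r_\alpha^{\star}(x)^2-\widetilde r_\alpha(x)^2$ given the local sample size is shown to be $O(L_n n^{-3/2}h^{-5d/4})=o(n^{-1}h^{-d/2})$ under $nh^{3d/2}\to\infty$, while the fluctuation of the integral of this quantity around its conditional mean is controlled by the $2h$-block independence (Lemma \ref{lem5}). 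As stated, your separate bounds would at best yield the theorem under $nh^{2d}\to\infty$. A secondary omission: the Edgeworth expansion only controls moderate deviations, so the estimator must be truncated at $L_n(nh^d)^{-1/2}$ before taking moments (the paper's $\widehat r_\alpha^\star$ and the term $T_{n1}$); without this the second-moment calculation is not justified.
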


In our theorem for the alternative we make the additional assumption that $nh^{3d/2}/L^*_n$ converges to $\infty$. This assumption is used in  the proof for the treatment of the deterministic  term $T_{n,2}$, see Lemma  \ref{lem4}. The assumption $nh^{3d/2}/L^*_n\to \infty$ can be weakened but with another limit for $T_{n,2}$. This would result in a limit theorem for the test statistic with a mean that differs from $b_{h,\alpha}$. We have added a short discussion of this point after the statement of Lemma  \ref{lem4}.

We expect that Theorem \ref{test} cannot be used for an accurate calculation of critical values.  The asymptotic normality result of
Theorem \ref{test} is based on the fact that kernel smoothers are asymptotically independent if they are calculated at points that differ more than $2h$. Thus the convergence is comparable to the convergence of the sum of $h^{-d}$ independent summands. This would motivate a rate of convergence of order $h^{-d/2}$. As has been suggested for  other goodness-of-fit tests in the literature, also here a way out is to use a bootstrap procedure. We will introduce some kind of wild  bootstrap for quantiles in which the Bahadur expansion $\widetilde r_{\alpha}$ of $\widehat r_{\alpha}$ is resampled. For the definition of $\widetilde r_{\alpha}$ see (\ref{addhd}) in Section 2. For the bootstrap, we define
 $$\widetilde r^*_{\alpha}(x)= -{\sumi K\Big(\frac{x-X_i}{h}\Big) \{I(U_{i}\leq \alpha) - \alpha\} \over \sumi K\Big(\frac{x-X_i}{h}\Big) \widehat f_{\varepsilon_\alpha|X}(0|X_i)},$$
where $\widehat f_{\varepsilon_\alpha|X}$ is an estimator of $f_{\varepsilon_\alpha|X}$ and $U_i$ are independent random variables with uniform distribution on $[0,1]$ that are independent of the sample. The bootstrap test statistics are defined as:
$$ \widehat T^{*}_A = \int_A \int_{R_X} \widetilde r_{\alpha}^{*}(x)^2 w(x,\alpha) dx\  d\alpha $$
and
$$ \widehat T^{*}_\alpha = \int _{R_X}\widetilde r_{\alpha}^{*}(x)^2 w(x) dx. $$
For proving the consistency of this bootstrap procedure, we do not specify the choice of the estimator $ \widehat f_{\varepsilon_\alpha|X}$ that is used in the construction of the bootstrap procedure. We only assume that the estimator is consistent:

\begin{itemize}
\item[(B6)] It holds that
$$\sup _{\alpha\in A,\ x \in R_X} \left | \widehat f_{\varepsilon_\alpha|X}(0|x) -f_{\varepsilon_\alpha|X}(0|x)\right | \rightarrow 0, $$
in probability.
\end{itemize}

The next theorem shows the consistency of the above bootstrap approach.

\begin{theorem} \label{testboot1}
Assume (B1)-(B6). Then,
\begin{eqnarray*} && d_K({\cal L}^*(n h^{d/2} \widehat T^*_{A} - b_{h,A}) , N(D_A,V_{A})) \stackrel{p}{\rightarrow} 0, \\
&& d_K({\cal L}^*(n h^{d/2} \widehat T^*_{\alpha} - b_{h,\alpha}) , N(D_\alpha,V_{\alpha}))\stackrel{p}{\rightarrow} 0,
\end{eqnarray*}
where ${\cal L}^*(...)$ denotes the conditional distribution, given the sample. Furthermore, $d_K$ is the Kolmogorov distance, i.e.\ the sup norm of the difference between the corresponding distribution functions.
\end{theorem}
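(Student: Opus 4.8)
The plan is to argue entirely conditionally on the sample $\{(X_i,Y_i)\}_{i=1}^n$, so that the only remaining randomness is in the i.i.d.\ uniform variables $U_1,\dots,U_n$. The decisive simplification relative to Theorem~\ref{test} is that $\widetilde r^*_\alpha$ is, by construction, already in the linear Bahadur form: writing $\xi_{i,\alpha}=I(U_i\le\alpha)-\alpha$ and $D^*_\alpha(x)=\sumi K((x-X_i)/h)f^*_{\varepsilon^*_\alpha|X^*}(0|X_i)$ (which is non-random given the sample), we have $\widetilde r^*_\alpha(x)=-D^*_\alpha(x)^{-1}\sumi K((x-X_i)/h)\xi_{i,\alpha}$. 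Hence no Bahadur remainder has to be controlled and the delicate moment/Edgeworth machinery behind Theorem~\ref{test} is not needed; the conditional moments of $\widehat T^*_A$ can be computed exactly. The conditional covariance structure of the driving variables is $\Cov^*(\xi_{i,\alpha},\xi_{i,\beta})=\min(\alpha,\beta)-\alpha\beta$, which is exactly the feature that will reproduce the region $\alpha<\beta$ and the factor $\alpha^2(1-\beta)^2$ appearing in $V_A$.

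First I would substitute the linear form into $\widehat T^*_A=\int_A\int_{R_X}\widetilde r^*_\alpha(x)^2 w(x,\alpha)\,dx\,d\alpha$ and split the resulting quadratic form in $\{\xi_{i,\alpha}\}$ into its diagonal part ($i=j$) and its off-diagonal part ($i\ne j$). For the diagonal part I would compute the conditional mean, which equals $\int_A\int_{R_X}\alpha(1-\alpha)\,\frac{\sumi K^2((x-X_i)/h)}{D^*_\alpha(x)^2}\,w(x,\alpha)\,dx\,d\alpha$; replacing the kernel sums by their smoothed limits ($\sumi K^2((x-X_i)/h)\approx n h^d K^{(2)}(0)f_X(x)$ and $D^*_\alpha(x)\approx n h^d f_X(x)f_{\varepsilon_\alpha|X}(0|x)$, the latter using (B6) to replace $f^*$ by $f$) shows that $n h^{d/2}$ times this mean equals $b_{h,A}+o_P(1)$, uniformly in $\alpha$. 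I would also check that the conditional variance of the diagonal part is of smaller order, so that after centering the diagonal part contributes only $b_{h,A}$. Here (B6) is the essential ingredient: $f^*$ enters solely through the conditionally deterministic normalizing denominator, so its estimation error produces a multiplicative $1+o_P(1)$ factor in the centering, and the uniformity over $\alpha\in A$ and $x\in R_X$ in (B6) is what makes the resulting perturbation of $b_{h,A}$ negligible after integration.

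The heart of the argument is the off-diagonal part, a degenerate quadratic form $\sum_{i\ne j}W_{ij}$ with conditionally independent contributions, where $W_{ij}$ already contains the integral over $A$ and $R_X$. I would first show that its conditional variance converges in probability to $V_A$. Expanding the square introduces a second quantile level $\beta$; using $\Cov^*(\xi_{i,\alpha},\xi_{i,\beta})=\alpha(1-\beta)$ for $\alpha<\beta$, the product of four kernels that localizes $X_i$ and $X_j$ within $2h$ of a common point, and the convolution identity that turns the resulting kernel sums into $K^{(4)}(0)$, yields precisely $V_A$ (the factor $4$ arising as $2$ from the symmetry of the quadratic form times $2$ from symmetrizing $(\alpha,\beta)$ onto $\{\alpha<\beta\}$). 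I would then establish conditional asymptotic normality by verifying the conditions of a central limit theorem for generalized quadratic forms (de~Jong, 1987): besides the variance convergence, this amounts to checking that no single term dominates, i.e.\ a fourth-moment negligibility condition, which follows from the kernel localization because each $W_{ij}$ is of order $n^{-1}h^{-d/2}$ and only $O(n\cdot nh^d)$ pairs interact. All of these conditions must be shown to hold in probability with respect to the sample.

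Finally I would combine the two parts: conditionally on the sample, $n h^{d/2}\widehat T^*_A-b_{h,A}$ converges weakly in probability to $N(0,V_A)$, and since the limiting law has a continuous distribution function this upgrades to convergence of the Kolmogorov distance, $d_K(\mathcal{L}^*(n h^{d/2}\widehat T^*_A-b_{h,A}),N(0,V_A))\stackrel{p}{\rightarrow}0$, by a Polya-type argument. The single-quantile statement for $\widehat T^*_\alpha$ is the same argument with the outer integral over $A$ removed. I expect the main obstacle to be twofold: establishing the conditional CLT for the degenerate quadratic form together with the exact matching of its conditional variance to $V_A$ uniformly over the continuum of levels $\alpha\in A$, and controlling the plug-in of $f^*$ so that the random centering agrees with the deterministic $b_{h,A}$ up to $o_P(1)$; the latter is exactly what (B6) is designed to deliver.
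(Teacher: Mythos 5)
Your proposal is correct and follows essentially the same route as the paper: the paper's (very terse) proof simply invokes de Jong's (1987) CLT for degenerate quadratic forms exactly as in its Lemma 9, using the observation that $I(U_i\le\alpha)$ has the same law (as a process in $\alpha$) as $I(\varepsilon_{i,\alpha}\le 0)$, which is precisely the diagonal/off-diagonal decomposition and conditional variance computation you carry out. Your treatment of the plug-in $f^*$ via (B6) and the Polya-type upgrade to Kolmogorov distance are details the paper leaves implicit, but they do not constitute a different approach.
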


Theorem \ref{testboot1} remains to hold if we replace (B1)--(B5) by weaker conditions. We do not pursuit this because we need for consistency of bootstrap that both, Theorem \ref{test} and Theorem \ref{testboot1}, hold.

\section{Numerical study}
In this section, we present the results of our numerical studies. In our first simulation we show that a direct application of the Bahadur representation is not accurate enough for studying the approximation of the distribution of our test statistics $\widehat T_{\alpha}$ and $\widehat T_A$. For this purpose, we compare the differences $D_1=\int |\widehat r_{\alpha}(x)^2 - \widetilde r_{\alpha}(x)^2|w(x)dx $ and $D_2=|\int \widehat r_{\alpha}(x)^2w(x)dx - \int \widetilde r_{\alpha}(x)^2w(x)dx| $.  Here $D_1$ is the integrated difference between the quantile regression and its Bahadur representation and $D_2$ is the difference between the test statistic and its approximation based on Bahadur representation. It is clear that $D_2 \leq D_1$. Our point is not that $D_2$ is smaller than $D_1$ but that the ratio $D_1/D_2$ is large and that it is decreasing for an increasing bandwidth.  This result supports our theory that a direct use of Bahadur expansions only works under very restrictive assumptions on the bandwidth. Table \ref{tab:diff1d} shows the results of $D_1$ and $D_2$ for the one dimensional case. We also simulated a two dimensional model, whose results are shown in Table \ref{tab:diff2d}. In the one dimensional model, we set $Y_i=X_i + (0.5X_i+0.5)\epsilon_i$, where $\epsilon_i$ has a standard normal distribution. This results in the  $\alpha^{th}$ quantile function $m_{\alpha}(x) = x+0.5 z_{\alpha} (x+1)$, where $z_{\alpha}$ stands for the  $\alpha^{th}$ quantile of the standard normal distribution. For the two dimensional model, we set $Y_i=X_{1i}+X_{2i} + (0.5X_{1i}-0.5X_{2i}+1)\epsilon_i$, where $\epsilon_i$ has a standard normal distribution and we get the $\alpha^{th}$ quantile function $m_{\alpha}(x_1,x_2) = x_1+2x_2+0.5 z_{\alpha} (x_1-x_2+2)$. For the one dimensional model, we generated $X_i$ from the uniform distribution supported on the unit interval $(0,1)$. For the two dimensional model we generated $(X_{1i},X_{2i})$ from a distribution on the unit square $(0,1)\times (0,1)$ which has uniform marginals  but where the joint distribution differs from a uniform distribution. This is done to allow for a dependence between the two regressors. We  generated random vectors from a bivariate normal distribution with correlations $\rho=0.2$ and 0.8 and then transformed them with their marginal distribution functions. We generated 400 data sets of size 200 and 400 for each model. For the one dimensional model, we used the bandwidths $h=0.05, 0.08, 0.1, 0.12 \mbox{ and } 0.15$ and we used the bandwidths $h= 0.125, 0.150, 0.175 \mbox{ and } 0.200$ for the two dimensional model.  We used the R package {\sl quantreg} for fitting quantile functions. From Table \ref{tab:diff1d} and Table \ref{tab:diff2d}, one can see that the ratio of $D_1$ over $D_2$ is large for small bandwidths and decreases as the bandwidth grows. This observation supports our approach for the asymptotic theory. This implies that when we approximate the test statistic it requires less strict assumptions on the bandwidth if we approximate the integrated function rather than when we approximate the quantile function itself.

\begin{table}[h!]
\begin{center}
\begin{tabular}{c r c| r r r r r }
\hline \multicolumn{3}{c|} {}&\multicolumn{5}{c}{Bandwidth}\\ \hline
          &                       &           & 0.05  &   0.08  & 0.1     & 0.12    & 0.15   \\ \hline
$n=200$  & $\alpha=0.25$ &$D_1$  &0.195&  0.116 &  0.091& 0.077& 0.059 \\
          &                       &$D_2$  & 0.067& 0.044&   0.037& 0.034&  0.028  \\
          &                       &Ratio    & \it 2.922& \it 2.653& \it 2.473& \it 2.284& \it 2.073 \\
          & $0.5$\phantom{0}&$D_1$  & 0.231 &  0.139& 0.110& 0.091& 0.071  \\
          &                            &$D_2$ &0.124&0.078& 0.065& 0.056& 0.046  \\
          &                       &Ratio    &\it 1.865& \it 1.779& \it 1.687& \it 1.635&\it 1.547   \\
          & $0.75$ &$D_1$ &   0.196& 0.117& 0.092& 0.077& 0.059\\
          &                       &$D_2$  & 0.065& 0.044& 0.037& 0.033& 0.029   \\
          &                       &Ratio    & \it  3.028& \it 2.696& \it 2.481& \it 2.305&\it  2.070    \\ \hline
$n=400$  & $\alpha=0.25$ &$D_1$ & 0.094& 0.057&  0.045& 0.037& 0.029  \\
          &                       &$D_2$ & 0.033& 0.022& 0.019& 0.017& 0.014   \\
          &                       &Ratio    & \it 2.899& \it 2.536& \it 2.338& \it 2.170& \it 2.027   \\
          & $0.5$\phantom{0}&$D_1$& 0.109& 0.068& 0.054& 0.045& 0.036  \\
          &                       &$D_2$  & 0.058& 0.039& 0.032& 0.028& 0.023 \\
          &                       &Ratio    & \it  1.867& \it 1.755& \it 1.684& \it 1.614& \it 1.552  \\
          & $0.75$ &$D_1$ &    0.094& 0.056& 0.045& 0.037& 0.029 \\
          &                       &$D_2$  &  0.032& 0.022& 0.019& 0.017& 0.015 \\
          &                       &Ratio    &\it 2.959&\it 2.604&\it  2.378& \it 2.192&\it 1.982\\ \hline

\end{tabular}
\end{center}
\caption{Difference between two approximations: $D_1$ is the integrated squared approximation error of a quantile estimator by its Bahadur representation and $D_2$ is the approximation error of the test statistic $\widehat T_{\alpha}$ by $\widetilde T_{\alpha}$. }
\label{tab:diff1d}
\end{table}

\begin{sidewaystable}
\begin{center}
\begin{tabular}{c r c| r r r r | r r r r }
\multicolumn{3}{c|} {}&\multicolumn{4}{c}{Weak dependence} &\multicolumn{4}{c}{Strong dependence} \\
\hline \multicolumn{3}{c|} {}&\multicolumn{8}{c}{Bandwidths}\\  \hline
        &              &      &   0.125 & 0.150& 0.175 & 0.200     &   0.125 & 0.150& 0.175 & 0.200   \\ \hline
$n=200$ & $\alpha=0.25$ & $D_1$ &0.249&0.139&0.086&0.058 &0.153&0.105&0.077&0.059  \\
         &                       & $D_2$ &0.081&0.051&0.036&0.029 &0.044&0.037&0.033&0.029  \\
         &                       & Ratio & \it 3.065&\it 2.716&\it 2.369&\it 1.983 &\it 3.445&\it 2.828&\it 2.371&\it 2.024  \\ \hline
         & $            0.5 $ & $D_1$&0.296&0.164&0.102&0.072 &0.180&0.126&0.094&0.072 \\
         &                       & $D_2$ &0.142&0.087&0.061&0.048 &0.089&0.069&0.056&0.048 \\
         &                       & Ratio &\it 2.093&\it 1.884&\it 1.667&\it 1.500 &\it 2.021&\it 1.832&\it 1.659&\it 1.504 \\ \hline
         & $           0.75$ & $D_1$&0.247&0.138&0.085&0.058 &0.153&0.106&0.077&0.059 \\
         &                       & $D_2$ &0.077&0.047&0.035&0.029 &0.045&0.037&0.032&0.029 \\
         &                       &Ratio    &\it 3.221&\it 2.929&\it 2.446&\it 1.990 &\it 3.355&\it 2.831&\it 2.378&\it 2.022 \\ \hline
$n=400$ &$\alpha=0.25$ &$D_1$&0.249&0.139&0.086&0.058 &0.074&0.051&0.038&0.029 \\
                      &&$D_2$& 0.081&0.051&0.036&0.029 &0.023&0.019&0.017&0.015 \\
                       &&Ratio&\it  3.066&\it 2.716&\it 2.369&\it 1.983 &\it 3.273&\it 2.716&\it 2.278&\it 1.958 \\ \hline
              &$0.5 $ &$D_1$& 0.296&0.166&0.102&0.072 &0.087&0.061&0.046&0.035 \\
                     &&$D_2$&  0.142&0.087&0.061&0.048 &0.044&0.034&0.028&0.024 \\
                      &&Ratio& \it  2.093&\it 1.884&\it 1.667&\it 1.500 &\it 1.991&\it 1.785&\it 1.610&\it 1.467 \\ \hline
            &$ 0.75$ &$D_1$&  0.247&0.138&0.085&0.058 &0.073&0.051&0.037&0.028 \\
                     &&$D_2$&  0.077&0.047&0.035&0.029 &0.022&0.019&0.017&0.015 \\
                      &&Ratio& \it  3.221&\it 2.929&\it 2.446&\it 1.990 &\it 3.274&\it 2.666&\it 2.230&\it 1.899 \\ \hline
\end{tabular}
\end{center}
\caption{Difference between two approximations under a two dimensional model: $D_1$ is the integrated squared approximation error of a quantile estimator by its Bahadur representation and $D_2$ is the approximation error of the test statistic $\widehat T_{\alpha}$ by $\widetilde T_{\alpha}$. The left panel in the table is the result for $\rho=0.2$ and the right panel shows the result for $\rho=0.8$.}
\label{tab:diff2d}
\end{sidewaystable}

The second simulation study is conducted to show the validity of our bootstrap procedure. We considered four scenarios:
\begin{enumerate}
\item[I.] All quantiles are linear:
$$ Y_i = m_0(X_i) +\sigma_0(X_i)\epsilon_i.$$
\item[II.] The median is linear and other quantiles are not linear:
$$ Y_i = m_0(X_i) +\sigma_1(X_i)\epsilon_i.$$
\item[III.] All quantiles are non-linear:
\begin{eqnarray*}
(a) & & Y_i = m_1(X_i) +\sigma_0(X_i)\epsilon_i;  \\
(b) & & Y_i = m_1(X_i) +\sigma_1(X_i)\epsilon_i.
\end{eqnarray*}
\end{enumerate}

Here $m_0(x)=x$, $m_1(x)=\sin(2\pi(x-0.5))$, $\sigma_0(x)=\frac 1 2 (1+x)$, and $\sigma_1(x)=2 (1.1+\sin(2\pi(x-0.5)))$. The covariates $X_i$ are generated from a uniform distribution on the unit interval $(0,1)$. We generated 200 samples of size 400. We generated 201 bootstrap samples for each data set. The three scenarios are shown in Figure \ref{fig:boots_models}. In the bootstrap procedure, we used a kernel density estimator for estimating the conditional density $f_{\varepsilon_\alpha|X}(0|x)$.

We tried three bandwidths 0.075, 0.100, and 0.125 for the test statistic, and fifteen choices of bandwidths $(0.1,0.2,0.3)\times(0.050,0.075,0.100,0.125,0.150)$ for estimating the conditional density of the error used in the bootstrap procedure. We applied the proposed bootstrap test for testing the linearity of the lower quartile, the median, and the upper quartile functions. We also tested the linearity hypothesis over these three different quantile levels. In our scenarios there are four models under the null hypothesis: all three quantiles in scenario I and the median in scenario II.  In Table \ref{tab:boots}, we report the summary statistics of rejection ratios of 45 different choices of bandwidths. Among 45 different choices of bandwidths, there was no case where the bootstrap test did not keep the significance level of 5\% under scenario I. In slightly more than half of the cases  the bootstrap test did not keep the significance level in testing the linearity of the median under scenario II. These cases appeared when we used large bandwidths. 
Concerning the power of the bootstrap test, we observed that almost all choices of the bandwidth showed a power near one under scenario III-(a). One exception is the case with the smallest bandwidths where we observe an empirical power around 0.8. One interesting observation is that the bootstrap test shows a poor power for the lower quartile in scenario III-(b) where the empirical power ranges from 0.045 to 0.27. This is however natural since the function is not so far from a linear function as one can see in Figure \ref{fig:boots_models}. We also observed that the median and the upper quartile in scenario III-(b) showed much stronger power. The result of the test based on the test statistic integrated over levels shows a similar result. In this case, only scenario I is in the null hypothesis and there was no case where the empirical size of the bootstrap test is bigger than 5\%. The power behavior is also similar. The test showed the strongest power with the bigger bandwidths.

\begin{figure}
\begin{center}
\scalebox{1.2}{\includegraphics{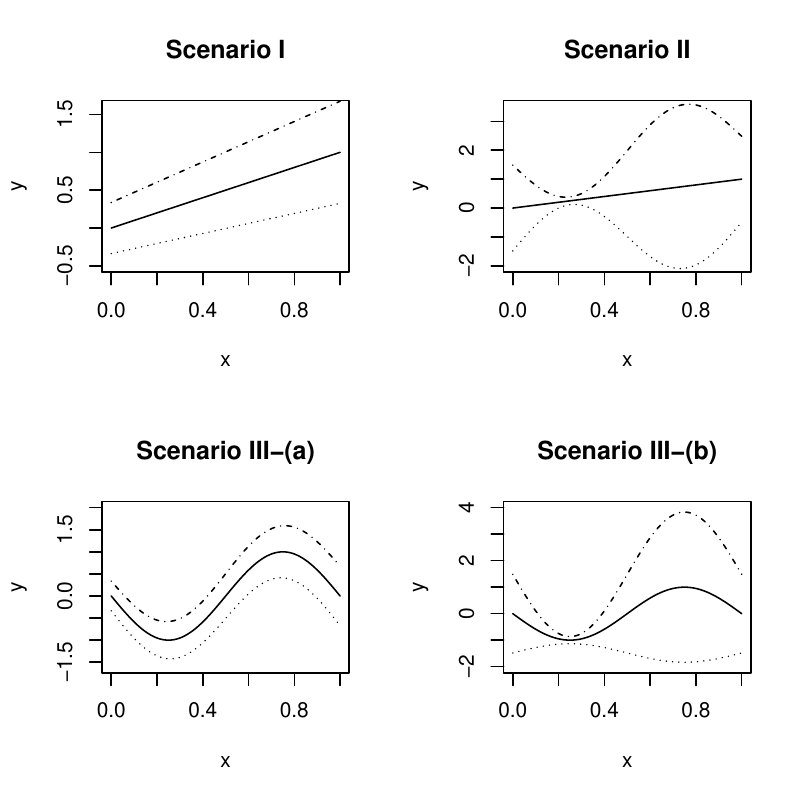}}
\end{center}
\vspace*{0cm}
\caption{Shape of quantile curves in each scenario. The three curves in each panel represent the $0.25$, $0.5$, and $0.75-$quantile curves in each scenario.}\label{fig:boots_models}
\end{figure}

\begin{table}[h!]
\begin{center}
\begin{tabular}{l  l| r r r r  }
\hline \multicolumn{2}{c|} {}&\multicolumn{4}{c}{Quartile functions}\\
         &       & 1st Quartile & Median & 3rd Quartile & Sum quartiles \\ \hline
Scenario I 
           & Q1    &0.005         &0.000   & 0.005        &0.000  \\
           & Med   &0.010         &0.005   &  0.010       &0.005  \\
           & Q3    &0.010         &0.010   & 0.015        &0.010      \\
Scenario II 
            & Q1     &0.470   &0.030 &0.450 &0.380 \\
            & Med  &0.575   &0.055 &0.545 &0.550 \\
            & Q3 &0.630   &0.105 &0.610 &0.695 \\
Scenario III (a) 
                 & Q1 &0.985   &1.000 &0.995 &1.000 \\
                 & Med &1.000   &1.000 &1.000 &1.000 \\
                 & Q3 &1.000   &1.000 &1.000 &1.000 \\
Scenario III (b) 
                & Q1    &0.065   &0.300 &0.735 &0.545 \\
                & Med &0.135   &0.385 &0.865 &0.770 \\
                 & Q3  &0.155   &0.440 &0.935 &0.895 \\
\end{tabular}
\end{center}
\caption{First quartile, median and third quartile (obtained from 45 choices of the bandwidth) of the rejection proportions based on 200 generated samples.}
\label{tab:boots}
\end{table}

Figure \ref{fig:pval} shows the distributions of estimated p-values by using the proposed bootstrap. The plots are based on 200 simulated data sets for scenario II. The left panel shows the distribution of estimated p-values for testing the linearity of the median, which lies in the null and the right panel shows the distribution of estimated p-values for testing the linearity of the upper quartile, which lies in the alternative. The distribution in the left panel is close to the uniform distribution which we expect for the null hypothesis and the right panel shows that the bootstrap p-values are close to zero, which we also expect.

To summarize our observations from this simulation study, in our setting the bootstrap test keeps the level well except for cases where we use too large bandwidths. On the other hand, too small bandwidths lead to relatively poor power. Interesting cases are scenario II and scenario III-(b). Under scenario II, the median is linear but both quartiles are not. The simulation result shows that the bootstrap test keeps the level for the median and has some power for the other quantiles. Under scenario III-(b), the lower quartile is non-linear but close to the null. We observe that here the bootstrap test has stronger power for the median and the upper quartile than for the lower quartile.

\begin{figure}
\begin{center}
\vspace*{0cm}
\scalebox{0.8}{\includegraphics{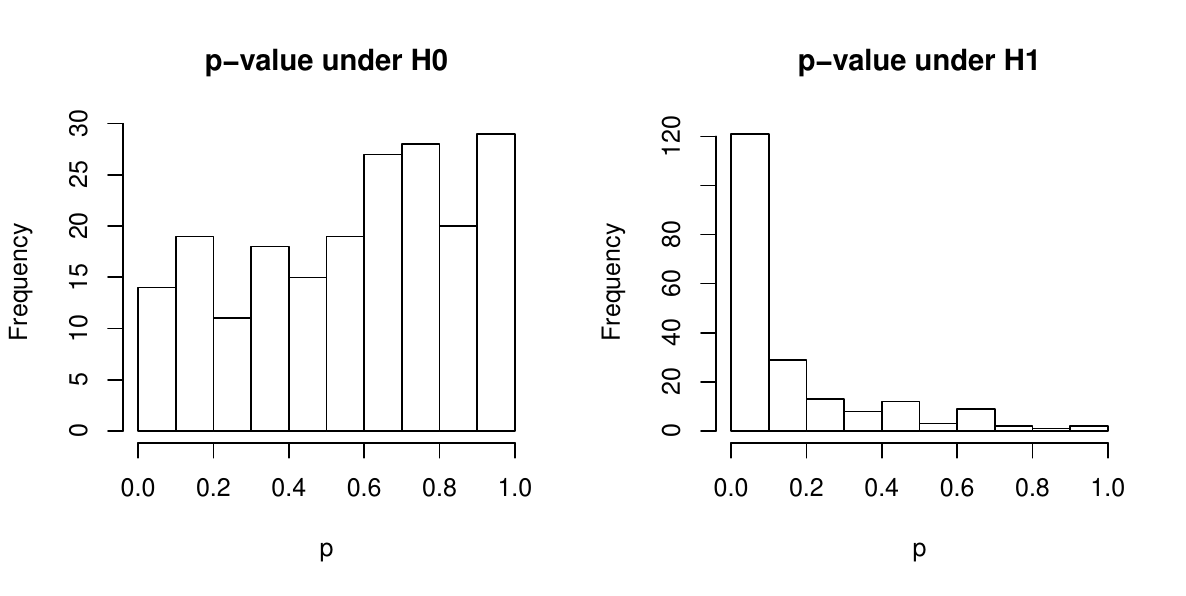}}
\end{center}
\vspace*{0cm}
\caption{The left panel shows the distribution of estimated p-values under the null  and the right panel shows the distribution of estimated p-values under the alternative.}
\label{fig:pval}
\end{figure}


In the last simulation study, we compared our test with the test proposed by Zheng (1998). We considered the same four scenarios as in the previous simulation study. 
In this simulation, we generated 500 data sets of 400 observations. For the bootstrap we generated 501 bootstrap samples. The other simulation settings are the same as in the previous simulation study. To choose the bandwidth for Zheng's test, we applied the function {\sl npregbw} in the {\sl R}-package {\sl np}, which is based on cross-validation with AIC. For the nonparametric quantile estimator in our procedure, we used the bandwidth proposed in Yu and Jones (1998). The bandwidths for the kernel estimator for the conditional density in the bootstrap procedure were chosen by a rule of thumb using the function {\sl bw.nrd} in {\sl R}. The level of the tests was set to $0.05$. We observe in Table \ref{tab:Zheng} that the level of our test is close to the nominal level. None of the two tests is always more powerful than the other. In the null model both tests keep the level well. In Scenario II, Zheng's test shows stronger power than the proposed test, whereas in Scenario III(b), the proposed test has higher power.

\begin{table}[h!]
\begin{center}
\begin{tabular}{l  l| r r r r  }
\hline \multicolumn{2}{c|} {}&\multicolumn{4}{c}{Quartile functions}\\
           &       & 1st Quartile &  Median& 3rd Quartile & Sum quartiles \\ \hline
Scenario I & Zheng   &0.006      &0.000   &0.000     & $\cdot$  \\
           & MVY   &0.024    &0.052    &0.048    &0.028  \\ \hline
Scenario II & Zheng   &0.954     &0.008      &0.950      &$\cdot$   \\
           & MVY   &0.696   &0.002     &0.650     &0.884  \\ \hline
Scenario III (a) & Zheng   &0.974      &0.994      &0.984      &$\cdot$   \\
           & MVY   &0.994     &0.992    &0.990     &1.000  \\ \hline
Scenario III (b) & Zheng   &0.034      &0.334    &0.460     &$\cdot$   \\
           & MVY   &0.048    &0.404     &0.992    &0.954  \\ \hline
\end{tabular}
\end{center}
\caption{Rejection proportions based on 500 generated samples.}
\label{tab:Zheng}
\end{table}

Finally, as an illustrating example, we applied the proposed test to a historic data set of Ernst Engel. The data set was used in  Koenker (2005), among many other publications. The data set was first presented by Engel (1857) to support his famous Engel's law. The data set has two variables, household income and food expenditure and it contains 235 observations. Figure \ref{fig:engel_data} shows the scatter plot of this dataset and the scatter plot of the data after a log transform with base 10. As one can see in  Figure \ref{fig:engel_data}, there is one outlier.  We removed this point from the data. Hence the analysis below is based on 234 observations.  We first analyzed the log transformed income versus the log transformed food expenditure. We used five different bandwidths for calculating the test statistic. The bandwidth for the conditional density estimator used in the bootstrap resampling was chosen by a rule of thumb. To obtain the bootstrap distribution, we resampled  the data set 1,001 times. We test the linearity of quantiles for $\alpha = $ 0.1,     0.2,      0.3,       0.4,    0.5,     0.6,     0.7,   0.8,    and 0.9.  As can be seen from Table \ref{tab:engel}, the test did not reject linearity of quantiles for any of these values at the significance level 5\%. This was the case for all five bandwidth choices.
We also used the log transformed income with the original untransformed food expenditure as a further example. Figure \ref{fig:engel_datanon} shows the scatter plot of this dataset  together with 0.1, 0.3, 0.5, 0.7, 0.9 linear quantile fits. The figure shows that high level quantiles deviate from their linear fits. This is also seen by our test, since it rejects the linearity for high level quantiles. The estimated p-values for the conditional quantiles of level $0.5$ or higher are smaller than $0.05$ for every bandwidth we used. 

\begin{table}
\begin{center}
\begin{tabular}{l r r r r r r r r r r r r r r r r r }
\multicolumn{10}{c} {$\log_{10}( \mbox{income}) ~vs~ \log_{10} (\mbox{food expenditure})$ }\\
\hline \multicolumn{1}{c} {}&\multicolumn{9}{l}{Quantile level}\\
        &0.1&0.2&0.3&0.4&0.5&0.6&0.7&0.8&0.9 \\
Bandwidth & & & & & & & & & \\                         \hline
0.050&0.289&0.246&0.629&0.999&0.996&1.000&0.999&1.000&1.000 \\
0.075&0.152&0.383&0.997&0.992&0.993&0.979&1.000&1.000&1.000 \\
0.100&0.105&0.745&0.997&0.964&0.971&0.970&0.999&1.000&0.993 \\
0.125&0.100&0.986&0.988&0.908&0.895&0.992&1.000&1.000&0.963 \\
0.150&0.149&0.996&0.976&0.894&0.843&0.997&0.999&1.000&0.935  \\ \hline \\
\multicolumn{10}{c} {$\log_{10}( \mbox{income}) ~vs~ \mbox{food expenditure}$ }\\  \hline
 \multicolumn{1}{c} {}&\multicolumn{9}{l}{Quantile level}\\

&0.1&0.2&0.3&0.4&0.5&0.6&0.7&0.8&0.9 \\
Bandwidth & & & & & & & & & \\                         \hline
0.050&0.305&0.675&0.413&0.135&0.031&0.008&0.012&0.004&0.000 \\
0.075&0.569&0.498&0.268&0.140&0.046&0.027&0.003&0.002&0.000 \\
0.100&0.640&0.324&0.181&0.047&0.015&0.004&0.003&0.001&0.000 \\
0.125&0.531&0.235&0.150&0.034&0.009&0.001&0.002&0.001&0.000  \\
0.150&0.664&0.197&0.088&0.012&0.003&0.002&0.002&0.003&0.000  \\ \hline
\end{tabular}
\caption{Estimated p-values for testing the linearity of conditional quantiles of Engel's data. The upper table shows the estimated p-values for testing the linearity of conditional quantiles of $\log_{10} (\mbox{food expenditure})$ as a function of $\log_{10}( \mbox{income})$ and the lower table shows the estimated p-values for testing the linearity of conditional quantiles of $\mbox{food expenditure}$ as a function of $\log_{10}( \mbox{income})$.}
\label{tab:engel}
\end{center}
\end{table}

\begin{figure}[!h]
\begin{center}
\vspace*{-1cm}
\scalebox{0.9}{\includegraphics{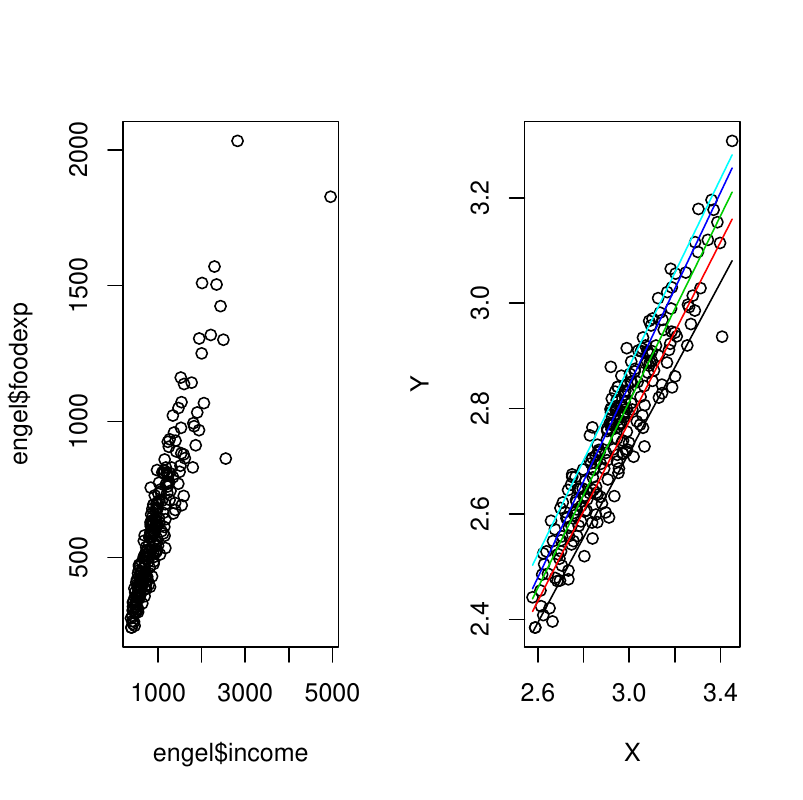}}
\vspace*{-1cm}
\end{center}
\caption{The left panel shows the scatter plot of the original Engel data. The right panel shows the scatter plot of log transformed data after removing one influential point. The lines in the right panel represent linear quantile fits of levels 0.1, 0.3, 0.5,   0.7, and 0.9.}
\label{fig:engel_data}
\end{figure}

\begin{figure}[!h]
\begin{center}
\vspace*{0cm}
\scalebox{0.8}{\includegraphics{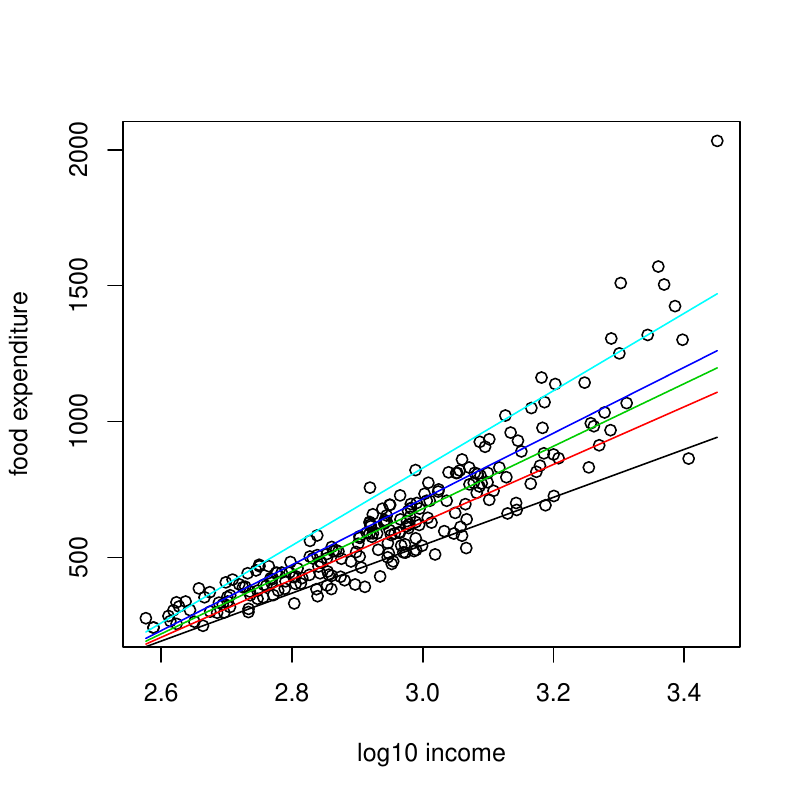}}
\end{center}
\vspace*{0cm}
\caption{The figure shows the scatter plot of log transformed income versus food expenditure after removing one influential point. The lines represent linear quantile fits of level 0.1, 0.3, 0.5, 0.7, and 0.9.}
\label{fig:engel_datanon}
\end{figure}

\section{Proof of Theorem \ref{theolem4}}

We need to show equations \eqref{July3}--\eqref{July3add}. Claim \eqref{July3add} follows from
\eqref{July3}--\eqref{July3add1} because of $\Delta_\alpha^h(x) = O(n^{-1/2}h^{-d/4})$. Furthermore,  \eqref{July3addqq} is a direct consequence of  \eqref{July3}, see the remark after the statement of the theorem. It remains to show \eqref{July3}--\eqref{July3add1}.

It holds that $P(cm_0 \leq N^-(x) \leq C m_0) \to 1$, where we use the shorthand notation $m_0=nh^d$.  At this point  and in the following proofs we will make use of our convention of using the symbols, $c,C,...$.

 First note that
\begin{eqnarray} \label{start} && \widehat r^{\Delta,-}_\alpha(x) \le um_0^{-1/2} \:\: \mbox{  if and only if } \\ \nonumber && \qquad \qquad \sum_{j\in {\cal N^{-}}(x)} K\Big(\frac{x-X_j}{h}\Big) \Big\{I(\varepsilon^\Delta_{j,\alpha} \le  \Delta_\alpha^h(x) + um_0^{-1/2}) - \alpha\Big\} \ge 0. \end{eqnarray}

Let
\begin{eqnarray*}
g_{x,\alpha}(u) &=& E^j_x \Big[K\Big(\frac{x-X_j}{h}\Big) \Big\{I(\varepsilon^\Delta_{j,\alpha} \le  \Delta_\alpha^h(x) + um_0^{-1/2}) - \alpha \Big\}  \Big] \\
&=& E^j_x \Big[K\Big(\frac{x-X_j}{h}\Big) \Big\{I(\varepsilon_{j,\alpha} \le  \Delta_\alpha^h(x,X_j) + um_0^{-1/2}) - \alpha \Big\}  \Big] \\
&=& um_0^{-1/2} E^j_x \Big[K\Big(\frac{x-X_j}{h}\Big) f_{\varepsilon_\alpha|X}(\Delta_\alpha^h(x,X_j)|X_j)  \Big]\\
&& + \frac12 u^2 m_0^{-1} E^j_x \Big[K\Big(\frac{x-X_j}{h}\Big) f'_{\varepsilon_\alpha|X}(\Delta_\alpha^h(x,X_j)|X_j)  \Big] \\
&& + O(L_n m_0^{-3/2} ),
\end{eqnarray*}
uniformly in $|u| \leq C^* L^*_n$, because of Assumption (B2).  Then, with
$$ \eta_{j,\alpha,u,x} = K\Big(\frac{x-X_j}{h}\Big) \Big\{I(\varepsilon^\Delta_{j,\alpha} \le   \Delta_\alpha^h(x)  +um_0^{-1/2}) - \alpha\Big\} - g_{x,\alpha}(u), $$
we have that
\begin{eqnarray*}
&& P\Big(\widehat r^{\Delta,-}_\alpha(x) \le um_0^{-1/2} \Big| {\cal N^{-}}(x), N^{-}(x)=m \Big) \\
&& = P \Big(m^{-1/2} \sum_{j \in {\cal N^{-}}(x)} \eta_{j,\alpha,u,x} \ge -m^{1/2}  g_{x,\alpha}(u) \Big|  {\cal N^{-}}(x), N^{-}(x)=m \Big). \end{eqnarray*}
We now argue that an Edgeworth expansion holds for the conditional density of $T_\eta=m^{-1/2} \sum_{j\in {\cal N^{-}}(x)} \eta_{j,\alpha,u,x}$, given  $ {\cal N^{-}}(x), N^{-}(x)=m$, that is of the form
\begin{eqnarray}\label{edge} \sigma^{-1} \sum_{r=0}^{s-3} m ^{-r/2}P_r(-\phi: \{ \chi_{\nu} \} ) ( \sigma^{-1} [\cdot - x])+ O(  m ^{-(s-2)/2} [1 + | \sigma^{-1} [\cdot - x ]|^s]^{-1})\end{eqnarray}
where, the error term holds  uniformly in $\alpha\in A$, $|u| \leq L_n^* $ and $x \in R_X$ for $C_1^* m_0\leq m \leq C_2^* m_0$ and constants $C_1^* < C_2^*$. Here, we use standard notation used e.g. in Bhattacharya and Rao (1976), p. 53. In particular, $\sigma^2 $ denotes the conditional variance of $\eta_{j,\alpha,u,x}$, given that $j \in {\cal N^{-}}(x)$, and $P_r(-\phi: \{ \chi_{\nu} \} ) $
denotes a product of a standard normal density $\phi$ with a polynomial that has coefficients
depending only on the conditional cumulants $\chi_\nu$ of $\eta_{j,\alpha,u,x}$ of order $\nu \leq s-1$, given that $j \in {\cal N^{-}}(x)$. Note that $\sigma^2 $ and $\chi_\nu$
depend on $u$, $\alpha$, $x$ and $n$ and that we do not indicate this in our notation. Furthermore, the cumulants and the variance converge to constants depending on $\alpha$, uniformly in $|u| \leq L_n^* $ and $x \in R_X$. Note that for $n \to \infty$ the conditional distribution of $\eta_{j,\alpha,u,x}$, given that $j \in {\cal N^{-}}(x)$, converges to the distribution of $K(U) (Z- \alpha)$ where $U$ and $Z$ are independent random variables, $U$ has a uniform distribution on $[-1,1]$ and $Z$ is $\{0,1\}$-valued with $P(Z=1) = \alpha$. This helps to understand that limit theorems hold uniformly. The function $P_r(-\phi: \{ \chi_{\nu} \} ) $ is defined as
$$P_r(-\phi: \{ \chi_{\nu} \} )  (u) = \sum_{m=1}^r \frac 1 {m!} (-1)^r\sum_{j_1+...+j_m=r} \frac{\chi_{j_1+2}}{(j_1+2)!}\cdot ... \cdot \frac{\chi_{j_m+2}}{(j_m+2)!} \phi^{(r+2m)}(u),$$
see Section 7 in in Bhattacharya
and Rao (1976).
In our case  expansion \eqref{edge} follows from Theorem 19.3 in Bhattacharya
and Rao (1976). For this claim we have to verify that their conditions (19.27), (19.29) and (19.30) hold. Our setting is slightly different from theirs, since we consider triangular arrays of independent identically distributed random variables instead of a sequence of independent random variables as is the case in Theorem 19.3 in Bhattacharya and
Rao (1976). But the same proof applies because
in our setting we can verify the following uniform versions of (19.27), (19.29) and (19.30):
\begin{eqnarray}\label{edgehelp1}  \sup_{\alpha\in A, |u| \leq L_n^*, x \in R_X, n \geq n_0}E_x^j \Big[|\eta_{j,\alpha,u,x}|^s \Big] &<& \infty, \\
\label{edgehelp2}  \sup_{\alpha\in A, |u| \leq L_n^*, x \in R_X, n \geq n_0} \int g_{\alpha,u,x}^q(t) \mathrm d t &<& \infty \mbox{ for some } q > 0, \\
\label{edgehelp3}   \sup_{\alpha\in A, |u| \leq L_n^*, x \in R_X, n \geq n_0}\{ g_{\alpha,u,x}(t): |t| \geq b\} &< &1 \mbox{ for all }  b > 0\end{eqnarray}
 with $g_{\alpha,u,x}(t) = | E^j_x [\exp(it \sigma^{-1} \eta_{j,\alpha,u,x})]|$ for some $n_0 > 0$. Note that $g_{\alpha,u,x}$, $\sigma$ and $ \eta_{j,\alpha,u,x}$ depend on $n$.

Claim \eqref{edgehelp1} follows by a direct argument using brute force bounds.
 For the proof of \eqref{edgehelp2}, we consider the conditional  density of $U_p= \sum_{j=1}^p (k(\frac{x_1-X_{1,j}}{h}), ...,k(\frac{x_d-X_{d,j}}{h}))^\intercal  \Big\{I(\varepsilon^\Delta_{j,\alpha} \le   \Delta_\alpha^h(x)  +um_0^{-1/2}) - \alpha\Big\}$  given the value of $\varepsilon^\Delta_{j,\alpha}$ and given that $X_j \in {\cal N^{-}}(x)$ for $j=1,...,p$. For $p=1$ this density evaluated at $(u_1,\ldots,u_d)$ can be bounded by a constant times $(u_1 \cdot ... \cdot u_d)^{-\kappa}((k(0) - u_1) \cdot ... \cdot(k(0) - u_d))^{-\kappa}$ by Assumptions (B1) and (B3). This bound holds uniformly over $\alpha$, $u$, $x$ and the value of $\varepsilon^\Delta_{1,\alpha}$.

 We now show that for every $\kappa^*> 0$, there exists $p^*>0$ such that  the density of $U_{p^*}$ can be bounded by a constant times $(u_1 \cdot ... \cdot u_d)^{\kappa^*}$. For simplification of notation we assume for the proof that $d=1$.
 For the proof of the claim, we show first that for $p \geq(1- \kappa)^{-1}$ we get that the conditional  density of $U_p$ is uniformly bounded. This follows by an evaluation of convolution integrals where one uses that $\int_0^v u^{-\kappa_1} (v-u) ^{-\kappa_2} \mathrm d u = v^{-(\kappa_1+\kappa_2-1)}\int_0^1 w^{-\kappa_1} (1-w) ^{-\kappa_2} \mathrm d w\leq C v^{-(\kappa_1+\kappa_2-1)}$. Applied to $U_2$ this gives that the density of $U_2$ is bounded by $C u_1 ^{-(2\kappa -1)}+ C |k(0) - u_1| ^{-(2\kappa -1)}  + C(2k(0) -u_1) ^{-(2\kappa -1)} $. Note that the density of $U_1$ is bounded by $Cu_1 ^{-\kappa } (k(0) - u_1)^{-\kappa } \leq Cu_1 ^{-\kappa } + C (k(0) - u_1)^{-\kappa }  $. For the density of $U_3$ we get the bound  $C u_1 ^{-(3\kappa -2)}+ C |k(0) - u_1| ^{-(3\kappa -2)} + C |2k(0) - u_1| ^{-(3\kappa -2)}  + C(3k(0) -u_1) ^{-(3\kappa -2)} $. Finally, for  $p \geq(1- \kappa)^{-1}$ it holds that $p \kappa- (p-1) \leq 0$ and we have that for  $p \geq(1- \kappa)^{-1}$, the density of $U_p$ is uniformly bounded. We now use that the $k$-fold convolution of a bounded density with bounded support $[0,z]$ for some $z> 0$ is bounded by $C |u_1|^{k-1}$. This gives that the density of $U_{p^*}$ can be bounded by a constant times $u_1 ^{\kappa^*}$ if $p^* \leq l_0 (\kappa^* +1)$ with $l_0 \geq(1- \kappa)^{-1}$, $ l \in \mathbb N$. This result can be easily extended to $d > 1$.

 From this result now we want to conclude that the conditional density   of $ \sum_{j=1}^p \eta_{j,\alpha,u,x}=\sum_{j=1}^p K(\frac{x-X_{j}}{h}) \Big\{I(\varepsilon^\Delta_{j,\alpha} \le   \Delta_\alpha^h(x) + um_0^{-1/2}) - \alpha\Big\}=\sum_{j=1}^p k(\frac{x_1-X_{1,j}}{h})\cdot ... \cdot k(\frac{x_d-X_{d,j}}{h}) \Big\{I(\varepsilon^\Delta_{j,\alpha} \le   \Delta_\alpha^h(x)  + um_0^{-1/2}) - \alpha\Big\}$ is uniformly bounded, given the value of $\varepsilon^\Delta_{j,\alpha}$ and given that $X_j \in {\cal N^{-}}(x)$ for $j=1,...,p$. This follows immediately from the following result. Suppose that $Z=(Z_1,...,Z_d)$ has support $[0,1]^d$ and a density $f$ that is bounded by
 $f(z) \leq D z_1 \cdot ... \cdot z_d$ then $Z_1 \cdot ... \cdot Z_d$ has a  density $g$ that is bounded by $D$.
 For a proof of this result, note that
 \begin{eqnarray*} g(u) &=& \partial_u \int_{z_1 \cdot ... \cdot z_d \leq u} f(z) \mathrm dz_1\   ... \ \mathrm dz_d\\ &=& \partial_u \int_{v \leq u} f\left ( \frac v {z_2\cdot ... \cdot z_d }, z_2,..., z_d\right) \frac 1 {z_2\cdot ... \cdot z_d } \mathrm d v\  \mathrm dz_2\   ...\ \mathrm dz_d\\ &=&  \int f\left ( \frac u {z_2\cdot ... \cdot z_d }, z_2,..., z_d\right) \frac 1 {z_2\cdot ... \cdot z_d }  \mathrm dz_2 \  ...\ \mathrm dz_d
 \\ &\leq&  \int f\left ( \frac u {z_2\cdot ... \cdot z_d }, z_2,..., z_d\right) \frac 1 {u }  \mathrm dz_2 \  ...\ \mathrm dz_d
  \\ &\leq&  \int D  \frac u {z_2\cdot ... \cdot z_d } \cdot  z_2 \cdot ... \cdot  z_d\cdot  \frac 1 {u }  \mathrm dz_2 \  ...\ \mathrm dz_d\\ &=& D.
 \end{eqnarray*}

 We now apply the result that for $p$ chosen large enough, the conditional density of  $\sum_{j=1}^p  \eta_{j,\alpha,u,x}$ is bounded, given that $X_j \in {\cal N^{-}}(x)$ for $j=1,...,p$, uniformly over $\alpha$, $u$ and $x$. This implies that the square of this conditional density is integrable and by the Fourier Inversion Theorem  (see Theorem 4.1 (vi) in Bhattacharya and
Rao (1976)) the same holds for the squared modulus of its Fourier transform. Thus the modulus of the Fourier transform of the conditional density of  $ \sum_{j=1}^{2p} \eta_{j,\alpha,u,x}$, given that $X_j \in {\cal N^{-}}(x)$ for $j=1,...,2p$, is integrable. This shows \eqref{edgehelp2} for $q=2p$.

For the proof of  \eqref{edgehelp3} one applies the Riemann-Lebesgue Lemma (see Theorem 4.1 in Bhattacharya and
Rao (1976)). Consider for simplicity the case where $d=1$.  For  $E_x^j [\exp(it  \eta_{j,\alpha,u,x})]$ one gets that \begin{eqnarray*} &&\exp[it g_{x,\alpha}(u)] E_x^j [\exp(it  \eta_{j,\alpha,u,x})]\\ && \qquad = \int _{x-h } ^{  x+h} \int_ {e \in
\mathbb{R}} \exp \Big [it K\left (\frac {x-z} h\right ) \{\mbox{I}(e \leq   \Delta_\alpha^h(x,z)  +u m_0^{-1/2} )- \alpha\} \Big] \\ && \qquad \qquad \times \ f_{\varepsilon_{\alpha}|X}(e |z) f_X(z)\ \mathrm d e \ \mathrm d z \bigg /  \int _{x-h } ^{  x+h} f_X(z)\mathrm d z\\
&& \qquad =  \int _{-1 } ^{  1} \int_ {e \in
\mathbb{R}} \exp \Big [it K\left (v\right ) \{\mbox{I}(e \leq   \Delta_\alpha^h(x,x+hv)  +u m_0^{-1/2} )- \alpha\} \Big] \\ && \qquad \qquad \times  \ f_{\varepsilon_{\alpha}|X}(e |x+hv) f_X(x+hv)\ \mathrm d e \ \mathrm d v  \bigg /  \int _{-1 } ^{  1} f_X(x+hv)\mathrm d v .\end{eqnarray*}
The right hand side of this equation converges to
\begin{eqnarray*}{\alpha} \int _{-1 } ^{  1 }  \exp[it (1 - \alpha) K(v)  ]dv + {(1-\alpha)} \int _{-1 } ^{  1 }  \exp[-it  \alpha K(v)  ]dv .\end{eqnarray*}
This convergence holds uniformly in $t \in \mathbb R$, $\alpha\in A$, $|u| \leq L_n^* m_0^{-1/2}$ and $x \in R_X$.
By using these facts we get  \eqref{edgehelp3}
from the Riemann-Lebesgue Lemma.

By applying Theorem 19.3 in Bhattacharya and
Rao (1976) with $s\geq 4$ we get that
\begin{eqnarray}
&& P\Big(\widehat r^{\Delta,-}_\alpha(x) \le um_0^{-1/2} \Big| {\cal N^{-}}(x), N^{-}(x)=m \Big)\label{edgeexp}\\ \nonumber
&& = 1 - \Phi\Big(\mu_\alpha(u)\Big) + m^{-1/2} \rho_\alpha(u) \Big(1-\mu_\alpha(u)^2\Big) \phi\Big(\mu_\alpha(u)\Big) + O\Big(m_0^{-1} (1+\mu_\alpha(u)^2)^{-s}\Big),
\end{eqnarray}
uniformly in $u$, $\alpha$ and $x$ for $C_1^* m_0\leq m \leq C_2^* m_0$ and constants $C_1^* < C_2^*$. Here we have used the fact that terms for $r= 2,..., s-3$ in the  expansion \eqref{edge} can be bounded by  $O\Big(m_0^{-1} (1+\mu_\alpha(u)^2)^{-s}\Big)$. We used the following notation
$$ \mu_\alpha(u) = -\frac{m^{1/2}  g_{x,\alpha}(u)}{\sigma_\alpha(u)} \:\: \mbox{ and } \:\: \rho_\alpha(u) = \frac {E_x^j(\eta_{j,\alpha,u,x}^3)}{6 \sigma_\alpha^3(u)}, $$
with $\sigma_\alpha^2(u) = E_x^j(\eta_{j,\alpha,u,x}^2)$. It is easy to show that, uniformly in $|u| \leq C^* L^*_n$,
\begin{eqnarray*}
 \sigma_\alpha^2(u) &=&    E_x ^j \Big[K^2\Big(\frac{x-X_i}{h}\Big)\Big(I(\varepsilon_{j,\alpha} \leq \Delta_\alpha^h (x, X_j)+ um_0^{-1/2} ) - \alpha\Big )^2 \Big]\\
 && \qquad \qquad+ O(L_n m_0^{-1} )\\&=&    A_1(\alpha) + um_0^{-1/2} A_2(\alpha)
 + O(L_n m_0^{-1} ),
\end{eqnarray*}
\begin{eqnarray*}
 E_x^j(\eta_{j,\alpha,u,x}^3) &=&
E_x ^j \Big[K^3\Big(\frac{x-X_i}{h}\Big)\Big(I(\varepsilon_{j,\alpha} \leq \Delta_\alpha^h (x, X_j)+ um_0^{-1/2} ) - \alpha\Big )^3 \Big]\\
 && \qquad \qquad+ O(L_n m_0^{-1/2} )\\
&& =A_3(\alpha)+ O(L_n m_0^{-1/2}),
\end{eqnarray*}
and that
\begin{eqnarray*}
\mu_\alpha(u)
&=& -um^{1/2}m_0^{-1/2} A_1(\alpha)^{-1/2} A_5(\alpha) - \frac12 u^2 m^{1/2}m_0^{-1} A_1(\alpha)^{-1/2} A_6(\alpha)\\
&& + \frac 1 2 u^2m^{1/2}m_0^{-1} A_1^{-3/2}(\alpha)A_2(\alpha) A_5(\alpha)+ O(L_n m_0^{-1} )
\end{eqnarray*}
with \begin{eqnarray*}
 A_1(\alpha) &=&    E_x ^j \Big[K^2\Big(\frac{x-X_i}{h}\Big)\Big((1-2\alpha) P(\varepsilon_{j,\alpha} \leq \Delta_\alpha^h (x, X_j) ) + \alpha^2\Big ) \Big],\\
  A_2(\alpha) &=& E_x ^j \Big[K^2\Big(\frac{x-X_i}{h}\Big)(1-2\alpha)  f_{\varepsilon_\alpha|X}( \Delta_\alpha^h (x, X_j) |X_j)\Big],\\
   A_3(\alpha) &=& E_x ^j \Big[K^3\Big(\frac{x-X_i}{h}\Big)\Big((1-3\alpha+ 3\alpha^2) P(\varepsilon_{j,\alpha} \leq \Delta_\alpha^h (x, X_j) ) - \alpha^3\Big ) \Big],\\
   A_4(\alpha) &=& E_x ^j \Big[K^2\Big(\frac{x-X_i}{h}\Big)\Big((1-2\alpha) P(\varepsilon_{j,\alpha} \leq \Delta_\alpha^h (x, X_j) ) + \alpha^2\Big ) \Big].,\\
   A_5(\alpha)&=& E^j_x \Big[K\Big(\frac{x-X_j}{h}\Big) f_{\varepsilon_\alpha|X}(\Delta_\alpha^h(x,X_j)|X_j)  \Big],\\
   A_6(\alpha)&=& E^j_x \Big[K\Big(\frac{x-X_j}{h}\Big) f'_{\varepsilon_\alpha|X}(\Delta_\alpha^h(x,X_j)|X_j)  \Big],\\
 \end{eqnarray*}
Note that $\mu_\alpha(-u)^2 = \mu_\alpha(u) ^2 + O(L_n  m_0^{-1/2})$. Thus we get that  uniformly in $|u| \leq C^* L^*_n$,
\begin{eqnarray}
\label{edga}
&& m^{-1/2} \rho_\alpha(u) \Big(1-\mu_\alpha(u)^2\Big) \phi\Big(\mu_\alpha(u)\Big)  - m^{-1/2} \rho_\alpha(-u) \Big(1-\mu_\alpha(-u)^2\Big) \phi\Big(\mu_\alpha(-u)\Big)  \\
\nonumber && \qquad= O(L_n  m_0^{-1} ),\\
\label{edgb}
&& m^{-1/2} \rho_\alpha(u) \Big(1-\mu_\alpha(u)^2\Big) \phi\Big(\mu_\alpha(u)\Big) = O(L_n m_0^{-1/2}  ).
\end{eqnarray}

Note also that with $u_m= u m^{1/2} m_0^{-1/2}$, uniformly in $|u| \leq C^* L^*_n$,
\begin{eqnarray*}
 1 - \Phi\Big(\mu_\alpha(u)\Big) &=& 1 - \Phi\Big(-u_m A_1^{-1/2}(\alpha)A_5(\alpha)\Big)\\
&&  + \phi \Big(-u_m A_1^{-1/2}(\alpha)A_5(\alpha)\Big) \frac{u_m^2}{2m^{1/2}}  (A_1^{-1/2}(\alpha)A_6(\alpha)\\ && \qquad \qquad - A_1^{-3/2}(\alpha)A_2(\alpha)A_5(\alpha)) + O(L_n  m_0^{-1} ).
\end{eqnarray*}
Hence, uniformly in $|u| \leq C^* L^*_n$,
\begin{eqnarray} \label{edg}
1-\Phi(\mu_\alpha(u)) + \Phi(-\mu_\alpha(-u)) &=& 2\Big[1-\Phi\Big(-u_m A_1^{-1/2}(\alpha)A_5(\alpha)\Big) \Big] + O(L_n  m_0^{-1} ).
\end{eqnarray}

{F}rom \eqref{edga}, (\ref{edg}) and the above calculations it now follows for $l \geq 1$  that with $D_m(\alpha)=m^{1/2} m_0^{-1/2} A_1^{-1/2}(\alpha)A_5(\alpha)$ and $ \widehat r^{\Delta,-}_\alpha(x) =  \widehat r^\Delta_\alpha(x) - \Delta_\alpha^h(x)$
\begin{eqnarray*}
&& E \Big\{m_0^{l}\widehat r^{\Delta,-}_\alpha(x)^{2l} I(|\widehat r^{\Delta,-}_\alpha(x)| \le  L^*_n m_0^{-1/2}) \Big| N^{-}(x) =m\Big\} \\
&& = 2l  \int_0^{L^*_n} v^{2l-1} P \Big(\widehat r^{\Delta,-}_\alpha(x) > vm_0^{-1/2} \Big| N^{-}(x)=m\Big) \, \mathrm dv \\
&& \qquad- {2l}   \int_{-L^*_n}^0 v^{{2l}-1}   P\Big(\widehat r^{\Delta,-}_\alpha(x) \le vm_0^{-1/2} \Big| N^{-}(x)=m \Big) \, \mathrm dv \\
&& = {2l}   \int_0^{L^*_n} v^{{2l}-1}  \Big[P \Big(\widehat r^{\Delta,-}_\alpha(x) > vm_0^{-1/2} \Big| N^{-}(x)=m\Big) \\
&& \qquad + P \Big(\widehat r^{\Delta,-}_\alpha(x) \le -vm_0^{-1/2} \Big| N^{-}(x) =m\Big)\Big] \,\mathrm dv \\
&& = {2l}  \int_0^{L^*_n} v^{{2l}-1}  \Big[ \Phi\Big(\mu_\alpha(v)\Big) - m^{-1/2} \rho_\alpha(v) \Big(1-\mu_\alpha(v)^2\Big) \phi\Big(\mu_\alpha(v)\Big)\\
&& \qquad +1- \Phi\Big(\mu_\alpha(-v)\Big) + m^{-1/2} \rho_\alpha(-v) \Big(1-\mu_\alpha(-v)^2\Big) \phi\Big(\mu_\alpha(-v)\Big)\Big] \mathrm d v + O(L_n m_0^{-1})
 \\
&& = 4{l} \int_0^{L^*_n} v^{{2l}-1}   \Phi\Big(-v D_m(\alpha)\Big)  \, \mathrm dv  + O(L_n m_0^{-1})\\
&& = 4 {l} D_m(\alpha)^{-{2l}} \int_0^{L^*_nD_m(\alpha)} w^{{2l}-1}   \Phi(-w )  \, \mathrm dw  + O(L_n m_0^{-1})\\
&& = 2  \Big[(L^*_n)^{2l} \Phi\Big(-L^*_n D_m(\alpha)\Big) + D_m(\alpha)^{-{2l}} \int_0^{L^*_n D_m(\alpha)} v^{2l}  \phi(v) \, dv \Big]+ O(L_n m_0^{-1})\end{eqnarray*}
uniformly in $C_1^* m_0\leq m \leq C_2^* m_0$ with constants $C_1^* < C_2^*$.
If $L_n^* =  (\log n)^{\gamma}$ is chosen with $\gamma > 0$ large enough we get that the right hand side of the last equation is equal to $D_m(\alpha)^{-{2l}} \int_{-\infty}^{\infty} v^{2l}  \phi(v) \, dv + O(L_n m_0^{-1})$. This follows since it can be easily shown that \begin{eqnarray*}
&&
2\int_0^{L^*_n D_m(\alpha)} z^{2l} \phi(z) \, dz - \int_{-\infty}^\infty z^{2l} \phi(z) \, dz = o(L_n n^{-C^*}) = o(m^{-C^*}),\\
&&(L^*_n)^{2l} \Phi\Big(-L^*_n D_m(\alpha)\Big)= o(m^{-C^*})
\end{eqnarray*} with $\gamma $ chosen depending on $C^*$. Thus we get for  $l \in \mathbb N$ that
\begin{eqnarray}\label{remain0}
&& E \Big\{\widehat r^{\Delta,-}_\alpha(x)^{2l} I(|\widehat r^{\Delta,-}_\alpha(x)| \le  L^*_n m_0^{-1/2}) \Big| N^{-}(x) =m\Big\} \\ \nonumber
&& = m^{-l} \frac{A_1^{l}(\alpha)}{A_5^{2l}(\alpha)}  \int_{-\infty}^\infty z^{2l} \phi(z) \, dz + O(L_n (nh^d)^{-l-1}  ).
\end{eqnarray} With similar arguments one can show that
\begin{eqnarray}\label{remain0add}
&& E \Big\{\widehat r^{\Delta,-}_\alpha(x)^{2l-1} I(|\widehat r^{\Delta,-}_\alpha(x)| \le  L^*_n m_0^{-1/2}) \Big| N^{-}(x) =m\Big\} \\ \nonumber
&& = m^{-(2l-1)/2} \frac{A_1^{(2l-1)/2}(\alpha)}{A_5^{2l-1}(\alpha)}  \int_{-\infty}^\infty z^{2l-1}\phi(z) \, dz + O(L_n (nh^d)^{-l}  )\\ \nonumber
&& = O(L_n (nh^d)^{-l}  ).
\end{eqnarray}
 In this case one applies  \eqref{edgb} instead of  \eqref{edga}.

For the proof of (\ref{July3}) and (\ref{July3add1}) it remains to show that uniformly in $C_1^* m_0\leq m \leq C_2^* m_0$, with constants $C_1^* < C_2^*$, and
for  $l \in \mathbb N$
\begin{eqnarray}\label{remain1}
&& E \Big\{\widetilde r^{\Delta,-}_\alpha(x)^{2l}\Big| N^{-}(x) =m\Big\} \\ \nonumber
&& = m^{-\kappa/2} \frac{A_1^{\kappa/2}(\alpha)}{A_5^\kappa(\alpha)}  \int_{-\infty}^\infty z^{2l} \phi(z) \, dz + O(L_n (nh^d)^{-l-1}  ),
\\
\label{remain2}
&& E \Big\{\widetilde r^{\Delta,-}_\alpha(x)^{2l-1} \Big| N^{-}(x) =m\Big\} \\ \nonumber
&& = O(L_n (nh^d)^{-l}  ).
\end{eqnarray}

It remains to show \eqref{remain1} -- \eqref{remain2}. For the proof of \eqref{remain1}  note that for independent random variables $Z_1,...,Z_m$ with mean zero, variance 1 and bounded $2l$-th absolute moment it holds that 
$$E\Big\{(m^{-1/2}\sum_{i=1}^m Z_i )^{2l} \Big\} = \int_{-\infty}^\infty z^{2l} \phi(z) \, dz+O(m^{-1}), $$
because for $Z_1^*,\ldots,Z_m^* \stackrel{iid}{\sim} N(0,1)$ one has
\begin{eqnarray*}
E\Big\{(m^{-1/2}\sum_{i=1}^m Z_i )^{2l} \Big\} &=& m^{-l} \textstyle{\sum^*} E(Z_{i_1} \ldots Z_{i_{2l}}) + O(m^{-1}) \\ 
&=& m^{-l} \textstyle{\sum^*} E(Z^*_{i_1} \ldots Z^*_{i_{2l}}) + O(m^{-1}) \\
&=& E\Big\{(m^{-1/2}\sum_{i=1}^m Z_i^*)^{2l} \Big\}+O(m^{-1}) \\
&=& E((Z_1^{*})^{2l}) + O(m^{-1})\\
&=& \int_{-\infty}^\infty z^{2l} \phi(z) \, dz+O(m^{-1}), 
\end{eqnarray*}
where the sum $\textstyle{\sum}^*$ runs over all indices $i_1,\ldots,i_{2l}$ that are such that each value of an index appears exactly two times.  
For the proof of \eqref{remain2}  one applies that for independent random variables $Z_1,...,Z_m$ with mean zero, variance 1 and bounded $2l+1$-th absolute moment it holds that 
\begin{eqnarray*}
E\Big\{(m^{-1/2}\sum_{i=1}^m Z_i )^{2l-1} \Big\} &=& m^{-l+1} m^{-1/2} E(\sum_{i=1}^m Z_i)^{2l-1} \\
&=& m^{-l+1} m^{-1/2} \textstyle{\sum^{**}} E(Z_{i_1} \ldots Z_{i_{2l-1}}) + O(m^{-3/2}) = O(m^{-1/2}), 
\end{eqnarray*}
where the sum $\textstyle{\sum^{**}}$ runs over all indices that are such that one value of an index appears three times and for all other $2l-4$ indices each value appears exactly two times.
This concludes the proof of the theorem.

\section{Proof of Theorem \ref{test}}

For the proof of Theorem \ref{test} we will use the following corollary of Theorem \ref{theolem4}.
For the statement of the corollary we have to define another construction of local neighborhoods. For their  definition suppose first that $X$ is one-dimensional. Then the support $R_X$ is a compact interval. For arbitrary $j$ and for $k\in \{1,2,3\}$, we can then define
$$ I_{jk} = [(3j+k-1)h,(3j+k)h], \hspace*{1cm} \mbox{and} \hspace*{1cm} I_{jk}^* = [(3j+k-2)h,(3j+k+1)h]. $$
The set of indices of the $X_i$ ($i=1,\ldots,n$) that fall inside the interval $I_{jk}^*$ is denoted by ${\cal N}_{jk}$. We write $N_{jk}$ for the number of elements of ${\cal N}_{jk}$.  An arbitrary $x \in R_X$ belongs to a unique $I_{jk}$ and we define ${\cal N}(x) = {\cal N}_{jk}$ and $N(x) = N_{jk}$.  Thus ${\cal N}(x)$ is an interval of length $3h$, such that $x$ lies in the middle subinterval of ${\cal N}(x)$ of length $h$. If the dimension of $X$ is larger than one, this partition of the support into small intervals can be generalized in an obvious way.

\begin{corollary} \label{corlem4}
Assume (B1)--(B3). Then, for natural numbers $l \geq 1$,
\begin{eqnarray*}   E\Big\{\overline r_{\alpha}^{\Delta,-}(x)^{2l} - \widetilde r_{\alpha}^{\Delta,-}(x)^{2l} \Big| N(x)=m \Big\} = O(L_n (nh^d)^{-l-1}  ) ,\\
 E\Big\{\overline r_{\alpha}^{\Delta,-}(x)^{2l-1} - \widetilde r_{\alpha}^{\Delta,-}(x)^{2l-1} \Big| N(x)=m \Big\} = O(L_n (nh^d)^{-l}  ) ,\end {eqnarray*}
uniformly in $x \in R_X$, $\alpha \in A$ and $C_1^* nh^d\leq m \leq C_2^* nh^d$,  where $N(x)$ is the random number of $X_i$'s that lie in ${\cal N}(x)$, and where
$ \overline r^{\Delta,-}_\alpha(x) =  \overline r^\Delta_\alpha(x) - \Delta_\alpha^h(x)$. For the second moments of  the uncentered estimators $\overline r_{\alpha}^{\Delta}$ and $ \widetilde r_{\alpha}^{\Delta}$  we have that
\begin{eqnarray*}   E\Big\{\overline r_{\alpha}^{\Delta}(x)^{2} - \widetilde r_{\alpha}^{\Delta}(x)^{2} \Big| N(x)=m \Big\} = O(L_n n^{-3/2}h^{-5d/4}   ).
\end {eqnarray*}
Under the additional assumption that $\Delta_\alpha \equiv 0$ we get that 
\begin{eqnarray*} E\Big\{\overline r_{\alpha}^{\Delta}(x)^{2} - \widetilde r_{\alpha}^{\Delta}(x)^{2} \Big| N(x)=m \Big\}  = O(L_n (nh^d)^{-2}  ).
\end {eqnarray*}

 \end{corollary}

\begin{proof}[Proof of Corollary \ref{corlem4}] For $m^{+} \geq m$ we have by a simple argument with $\kappa = 2l$ or $\kappa = 2l+1$ that $ E\Big\{\overline r_{\alpha}^{\Delta,-}(x)^\kappa - \widetilde r_{\alpha}^{\Delta,-}(x)^\kappa\Big| N(x)=m^{+} , N^{-}(x)=m \Big\} = E\Big\{\overline r_{\alpha}^{\Delta,-}(x)^\kappa - \widetilde r_{\alpha}^{\Delta,-}(x)^\kappa \Big| N^{-}(x)=m \Big\}$. Note that $N^{-}(x) \leq N(x)$ because of ${\cal N}^{-}(x) \subset {\cal N}(x)$.
Using  (\ref{July3})  and
$$P \left( N^{-}(x) \leq {m^+\over 4} \Big | N(x) =m^+\right ) \leq C \exp(-cnh^d),$$
uniformly in $m^+ \geq {1\over 2} 3^d f_X(x) nh^d$ we conclude that
$$ E\Big\{\overline r_{\alpha}^{\Delta,-}(x)^\kappa - \widetilde r_{\alpha}^{\Delta,-}(x)^\kappa \Big| N(x)=m^{+}  \Big\}  = O(L_n (nh^d)^{-l-1}),$$
uniformly in $x \in R_X$, $\alpha \in A$ and $ {1\over 2} 3^d f_X(x) nh^d \leq m^+ \leq 2\  3^d f_X(x) nh^d$.

Since
$$P\left ( {1\over 2} 3^d f_X(x) nh^d \leq N(x) \leq 2\  3^d f_X(x) nh^d \mbox { for all } x \in R_X\right ) \to 1, $$ we get the statement of the corollary.
\end{proof}

We now come to the proof of Theorem \ref{test}.

We only prove the statement for $ \widehat T_A$. The asymptotic result for $ \widehat T_\alpha$ follows similarly.  We need to introduce a few more notations. With $\delta_{\theta,\alpha} (x) = -(\theta(\alpha)-\theta_0(\alpha))^\top \gamma_\alpha(x)+ n^{-1/2} h^{-d/4} \Delta_\alpha(x)$ and $\varepsilon_{i,\alpha}^\Delta= \varepsilon_{i,\alpha} + n^{-1/2} h^{-d/4} \Delta_\alpha(X_i)$
we define $ \widetilde r^\Delta_{\alpha}$ as in \eqref{rtilde} and we put
\begin{eqnarray*}
 \widehat r^\Delta_{\alpha,\theta}(x) &= & \arg \min_r \sum_{i=1}^n K\left ( {x-X_i \over h}\right ) \tau_{\alpha} (\varepsilon_{i.\alpha} +\delta_{\theta,\alpha} (X_i)-r).
\end{eqnarray*}
	Note that $\widehat r^\Delta_{\alpha}(x) =  \widehat r^\Delta_{\alpha,\theta_0}(x)$, and that $\widehat r_{\alpha}(x) = \widehat r^\Delta_{\alpha,\widehat \theta}(x) + O_P(n^{-1/2-c})$ by Assumption (B4). We also define $ \overline r_{\alpha}^\Delta$ as in  \eqref{rtildeadd2}.
Let also
\begin{eqnarray*}
W_{ni}(x,h) &=& K_h(x-X_i)/[\sum_j K_h(x-X_j)],
\end{eqnarray*}
with $K_h(\cdot) = K(\cdot/h)/h^d$.

The proof of Theorem \ref{test} will make use of the following lemmas.
\begin{lemma} \label{lem1}
Suppose that the assumptions of Theorem \ref{test} are satisfied. Then,
\begin{eqnarray}
&&\sup_{\alpha \in A} \sup_{x \in R_X} \Big|\widehat r_{\alpha}(x)  \Big| = O_P((nh^d)^{-1/2} L_n), \label{eq1}\\
&&\sup_{\alpha \in A} \sup_{x \in R_X} \Big|\widehat r^\Delta_\alpha(x)  \Big| = O_P((nh^d)^{-1/2} L_n) \label{eq2}.
\end{eqnarray}
\end{lemma}

\begin{proof}[Proof of Lemma \ref{lem1}] As is known for the case where there is no parametric part and where $\Delta_\alpha\equiv 0$, one has that $$ \sup_{\alpha \in A} \sup_{x \in R_X} \Big|\widehat r_{\alpha,\theta_0}^{\Delta}(x)  -\widetilde r_\alpha(x) \Big| = O_P((nh^d)^{-3/4} L_n)$$
with $\widetilde r_\alpha$ defined as in \eqref{addhd}.
For a proof see Theorem 2   in Guerre and Sabbah (2012). By standard smoothing theory we have that (still when $\Delta_\alpha \equiv 0$)
\begin{eqnarray} \label{tildestar}
\sup_{\alpha \in A} \sup_{x \in R_X} \Big|\widetilde r_\alpha(x) \Big| = O_P((nh^d)^{-1/2} L_n).
\end{eqnarray}
This shows (\ref{eq2}) when $\Delta_\alpha \equiv 0$.  
We can move from this case to  $\Delta_\alpha \not = 0$ by adding to the observations  terms of order $O_P(n^{-1/2} h^{-d/4})$.  This changes the local quantiles  by at most this amount, and hence (\ref{eq2}) still holds when $\Delta_\alpha \neq 0$.  

In the case of $\widehat r_{\alpha}(x)=\widehat r_{\alpha, \widehat\theta}^\Delta(x)+O_P(n^{-1/2-c})$ we have to add to the observations terms of the order $ O_P(L_nn^{-1/2} h^{-d/4})= O_P((nh^d)^{-1/2} L_n)$. This shows the first statement of the lemma. \end{proof}

\begin{lemma} \label{lem2a}
Suppose that the assumptions of Theorem \ref{test} are satisfied. Then,
\begin{eqnarray*}
&&\sup_{\alpha \in A} \sup_{x \in R_X} \Big|\widehat r_\alpha(x) - \widehat r_\alpha^\Delta(x) + (\widehat \theta(\alpha)
-  \theta_0(\alpha))^\top \gamma_\alpha(x) \Big| = O_P(n^{-{1 \over 2} -c} ).
\end{eqnarray*}
\end{lemma}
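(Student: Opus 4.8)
The plan is to pass from $\widehat r_\alpha$ to the intermediate quantity $\widehat r_{\alpha,\widehat\theta(\alpha)}$ introduced in (\ref{rtilde}), and then to compare $\widehat r_{\alpha,\widehat\theta(\alpha)}$ with $\widehat r^0_\alpha=\widehat r_{\alpha,\theta_0}$. Throughout I would use the same elementary monotonicity device already invoked in the proof of Lemma \ref{lem1}: the weighted local $\alpha$-quantile $\arg\min_r\sum_i K((x-X_i)/h)\tau_\alpha(Z_i-r)$ is nondecreasing in each observation $Z_i$, the kernel weights being nonnegative. Hence, if two data sets satisfy $|Z_i-Z_i'|\le\rho$ for every $i$ with $X_i$ in the kernel window, the corresponding minimizers differ by at most $\rho$. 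This reduces the whole lemma to controlling, uniformly in $\alpha$ and $x$, the discrepancies between the effective observations entering the three minimization problems.

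First I would rewrite the observations of $\widehat r_\alpha$. Writing $Y_i=m_\alpha(X_i)+\varepsilon_{i,\alpha}$ and inserting the local alternative (\ref{expm}) gives $Y_i-m_{\alpha,\widehat\theta(\alpha)}(X_i)=\varepsilon_{i,\alpha}+n^{-1/2}h^{-d/4}\Delta_\alpha(X_i)+[m_{\alpha,\theta_0(\alpha)}(X_i)-m_{\alpha,\widehat\theta(\alpha)}(X_i)]$. By (B4) the bracket equals $-(\widehat\theta(\alpha)-\theta_0(\alpha))^\top\gamma_\alpha(X_i)$ up to a remainder that is $O_P(n^{-1/2-c})$ uniformly in $i$ and $\alpha$. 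Thus the effective observations of $\widehat r_\alpha$ coincide with $\varepsilon_{i,\alpha}+\delta_{\widehat\theta,\alpha}(X_i)$, i.e.\ those defining $\widehat r_{\alpha,\widehat\theta(\alpha)}$, up to a uniform perturbation of order $O_P(n^{-1/2-c})$, and the monotonicity device yields $\sup_{\alpha,x}|\widehat r_\alpha(x)-\widehat r_{\alpha,\widehat\theta(\alpha)}(x)|=O_P(n^{-1/2-c})$. It then remains to analyze $\widehat r_{\alpha,\widehat\theta(\alpha)}-\widehat r^0_\alpha$.

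For this second step the crucial point is that the $\Delta$-terms cancel: $\delta_{\widehat\theta,\alpha}(x)-\delta_{\theta_0,\alpha}(x)=-(\widehat\theta(\alpha)-\theta_0(\alpha))^\top\gamma_\alpha(x)$, so the observations of $\widehat r_{\alpha,\widehat\theta(\alpha)}$ are exactly those of $\widehat r^0_\alpha$ shifted by $-(\widehat\theta(\alpha)-\theta_0(\alpha))^\top\gamma_\alpha(X_i)$. I would split this shift as $-(\widehat\theta(\alpha)-\theta_0(\alpha))^\top\gamma_\alpha(x)$ plus $-(\widehat\theta(\alpha)-\theta_0(\alpha))^\top[\gamma_\alpha(X_i)-\gamma_\alpha(x)]$. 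The first piece is constant in $i$ and therefore transfers verbatim to the minimizer, producing precisely the term $-(\widehat\theta(\alpha)-\theta_0(\alpha))^\top\gamma_\alpha(x)$ appearing in the statement. The second piece is a genuine $i$-dependent perturbation, but only indices with $\|X_i-x\|\le\sqrt d\,h$ contribute, and for those the H\"older bound in (B4) gives $|\gamma_\alpha(X_i)-\gamma_\alpha(x)|\le C h^\delta$; combined with $\sup_\alpha\|\widehat\theta(\alpha)-\theta_0(\alpha)\|=O_P(n^{-1/2+c})$ from (B5) this perturbation is uniformly $O_P(n^{-1/2+c}h^\delta)$, which the monotonicity device converts into an error of the same order in the quantile.

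Collecting the two steps gives a total error $O_P(n^{-1/2-c})+O_P(n^{-1/2+c}h^\delta)$. The main obstacle, and the only place where care is needed, is to verify that this is $O_P(n^{-1/2-c'})$ for some $c'>0$: the H\"older smoothing gain $h^\delta$ must outweigh the rate deficit $n^{c}$ of $\widehat\theta$. Since $h\to 0$ at a polynomial rate and the constant $c$ in (B5) is a generic ``small enough'' constant, one fixes $c$ below $\delta$ times the decay exponent of $h$, so that $n^{c}h^\delta$ is itself polynomially small and furnishes the exponent $c'$. The remainder is purely bookkeeping of uniformity in $(\alpha,x)$, inherited from the uniform statements in (B4) and (B5) and from the fact that the kernel window has diameter of order $h$ at every $x$.
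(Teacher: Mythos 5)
Your argument is correct and is essentially the paper's own proof, merely routed through the intermediate estimator $\widehat r_{\alpha,\widehat\theta(\alpha)}$ instead of the paper's shifted estimator $\widehat r_{\alpha,mod}$: both rest on the same monotone-perturbation device for weighted local quantiles, use (B4) to absorb the nonlinearity of $m_{\alpha,\widehat\theta(\alpha)}-m_{\alpha,\theta_0(\alpha)}$ at cost $O_P(n^{-1/2-c})$, and split the linear term into the exact constant shift $-(\widehat\theta(\alpha)-\theta_0(\alpha))^\top\gamma_\alpha(x)$ plus the H\"older variation of $\gamma_\alpha$ over the window, bounded by $O_P(n^{-1/2+c}h^\delta)$ via (B5). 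Your closing remark that one needs $n^{c}h^\delta$ to be polynomially small is the same (implicit) step the paper takes when it asserts $O_P(n^{-1/2+c}h^\delta)=O_P(n^{-1/2-c})$ for $c$ small enough.
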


\begin{proof}[Proof of Lemma \ref{lem2a}]
First note that $\widehat r_\alpha(x) + (\widehat \theta(\alpha) - \theta_0(\alpha))^\top \gamma_\alpha(x)$ is equal to the quantile estimator we would obtain when we shift all observations $Y_i$ in the window around $x$ by the amount $(\widehat \theta(\alpha) - \theta_0(\alpha))^\top \gamma_\alpha(x)$, and hence we need to show that the distance between this latter estimator (say $\widehat r_{\alpha,mod}(x)$) and $\widehat r_\alpha^\Delta(x)$ is $O_P(n^{-{1 \over 2} -c})$ uniformly in $\alpha$ and $x$.

Next, note that if now in addition we perturb all observations in the window around $x$ by adding $m_{\alpha,\widehat \theta(\alpha)}(X_i) - m_{\alpha,\theta_0(\alpha)}(X_i) - (\widehat \theta(\alpha) - \theta_0(\alpha))^\top \gamma_\alpha(X_i)$, the quantile estimator $\widehat r_{\alpha,mod}(x)$ will get perturbed by at most the maximal perturbation of the observations, which is of the order $O_P(n^{-1/2-c})$ by Assumption (B4).

After these two perturbations, the quantile estimator is now based on $Y_i - m_{\alpha,\theta_0(\alpha)}(X_i) + (\widehat \theta(\alpha) - \theta_0(\alpha))^\top (\gamma_\alpha(x) - \gamma_\alpha(X_i))$ instead of $Y_i - m_{\alpha,\widehat \theta(\alpha)}(X_i)$.  Finally note that if we apply one more perturbation by subtracting $(\widehat \theta(\alpha) - \theta_0(\alpha))^\top (\gamma_\alpha(x) - \gamma_\alpha(X_i))$ for all $X_i$ in the window around $x$, the estimator changes by at most $O_P(h^{-\delta} n^{-1/2-\rho}h^\delta) = O_P(n^{-1/2-\rho})$ by Assumption (B5).   The so-obtained estimator equals $\widehat r_\alpha^\Delta(x)$, which shows the statement of the lemma.
\end{proof}

\begin{lemma} \label{lem2}
Suppose that the assumptions of Theorem \ref{test} are satisfied. Then,
\begin{eqnarray*} &&\sup_{\alpha \in A} \sup_{x \in R_X} \Big|\widehat r_\alpha^\Delta(x) - \widetilde r^\Delta_\alpha(x) \Big| = O_P((nh^d)^{-3/4} L_n).\end{eqnarray*}
\end{lemma}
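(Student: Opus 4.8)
The plan is to read $\widehat r^0_\alpha(x)$ as the Nadaraya--Watson local-constant $\alpha$-quantile estimator built from the shifted errors $\varepsilon^\Delta_{i,\alpha}=\varepsilon_{i,\alpha}+n^{-1/2}h^{-d/4}\Delta_\alpha(X_i)$, and $\widetilde r_\alpha$ (defined in (\ref{rtilde})) as its Bahadur linearization: up to the discreteness of the indicator, $\widehat r^0_\alpha(x)$ solves $\sum_i K((x-X_i)/h)\{I(\varepsilon^\Delta_{i,\alpha}\le r)-\alpha\}=0$, whose one-step linearization at $r=0$ is exactly $\widetilde r_\alpha(x)$. Given $X=x$, the variable $\varepsilon^\Delta_{i,\alpha}$ has conditional $\alpha$-quantile $q^\Delta_\alpha(x):=n^{-1/2}h^{-d/4}\Delta_\alpha(x)$, which is uniformly of order $o((nh^d)^{-1/2})$. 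I would first invoke the uniform Bahadur representation for kernel quantile estimators, Theorem 2 of Guerre and Sabbah (2012) --- already used for the case $\Delta_\alpha\equiv0$ in the proof of Lemma \ref{lem1} --- now applied to the array $\varepsilon^\Delta_{i,\alpha}$. The conditions of that theorem hold uniformly in $n$ and $\alpha$ by Assumptions (B1)--(B3): (B1) is stated precisely for the conditional density of $X$ given $\varepsilon_\alpha+n^{-1/2}h^{-d/4}\Delta_\alpha(X)$, and the Lipschitz bound in (B2) passes to the conditional law of $\varepsilon^\Delta_{i,\alpha}$. This delivers a Bahadur expansion of $\widehat r^0_\alpha$ with remainder $O_P((nh^d)^{-3/4}L_n)$, uniformly in $x\in R_X$ and $\alpha\in A$.

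It then remains to match the linear term of that expansion, which is centered at the true conditional quantile $q^\Delta_\alpha(x)$ and uses $f_{\varepsilon^\Delta_\alpha|X}$ in the denominator, with $\widetilde r_\alpha$, which is centered at $0$ and uses $f_{\varepsilon_\alpha|X}(0|\cdot)$. I would re-expand the linear term around $r=0$ and bound the three resulting discrepancies. (i) The denominator swap: $f_{\varepsilon^\Delta_\alpha|X}(0|X_i)=f_{\varepsilon_\alpha|X}(-q^\Delta_\alpha(X_i)|X_i)$ differs from $f_{\varepsilon_\alpha|X}(0|X_i)$ by $O(n^{-1/2}h^{-d/4})$ uniformly by (B2); together with $\widetilde r_\alpha=O_P((nh^d)^{-1/2}L_n)$ (a standard smoothing bound, as in (\ref{tildestar}) and Lemma \ref{lem1}), this contributes $O_P(n^{-1}h^{-3d/4}L_n)=o_P((nh^d)^{-3/4}L_n)$. (ii) The first-order deterministic part of the re-centering reproduces exactly the kernel-weighted conditional mean already present in $\widetilde r_\alpha$ (and the smoothing bias coming from the non-constancy of $q^\Delta_\alpha$), and therefore cancels, leaving only a deterministic second-order term governed by the curvature of $F_{\varepsilon_\alpha|X}$, of order $(q^\Delta_\alpha)^2=O(n^{-1}h^{-d/2})=o((nh^d)^{-3/4})$. (iii) The remaining stochastic piece is a kernel-weighted empirical-process increment over an interval of length $|q^\Delta_\alpha|=O(n^{-1/2}h^{-d/4})$; its standard deviation is of order $(nh^d|q^\Delta_\alpha|)^{1/2}$, so after division by the denominator (of order $nh^d$) it is $O_P((nh^d)^{-3/4}h^{d/8}L_n)=o_P((nh^d)^{-3/4}L_n)$.

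Finally, every bound must hold uniformly in $(x,\alpha)$, which I would obtain by the discretization used throughout the paper: cover $R_X$ and $A$ by grids of polynomial cardinality with mesh a negative power of $n$, control the increments of $\widehat r^0_\alpha(x)$ and $\widetilde r_\alpha(x)$ in $x$ through the Lipschitz property of $K$ and (B2), and in $\alpha$ through the continuity assumptions of (B4)--(B5), and then pass from the grid to all $(x,\alpha)$ via these moduli of continuity. The main obstacle is the uniform control of the kernel-weighted empirical process inside the Bahadur step, i.e.\ the oscillation bound that produces the exponent $3/4$; this requires a Bernstein-type exponential inequality for the local indicator sums combined with a chaining argument over $(x,\alpha)$ and over the localization window $|r|\le (nh^d)^{-1/2}L_n$. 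This is precisely the technical core of Theorem 2 in Guerre and Sabbah (2012), so the only genuinely new work here is to verify that the uniformly small shift $q^\Delta_\alpha$ preserves their hypotheses and that the discrepancy terms (i)--(iii) are of smaller order, which the estimates above confirm.
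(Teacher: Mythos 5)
Your proposal is correct in substance and all your order computations check out, but it is organized differently from the paper's argument. The paper does not re-apply Guerre and Sabbah (2012) to the shifted array and then reconcile centerings; instead it gives a short direct argument: (a) since $\widehat r^0_\alpha(x)$ solves the local estimating equation up to one jump of the weighted empirical distribution function, $\bigl|\widehat F_{\varepsilon^\Delta_\alpha|X}(\widehat r^0_\alpha(x)|x)-\alpha\bigr|=O_P((nh^d)^{-1})$; (b) a uniform expansion of $\widehat F_{\varepsilon^\Delta_\alpha|X}(y|x)-\widehat F_{\varepsilon^\Delta_\alpha|X}(0|x)$ over $|y|\le a_n$ yields the linear term $y\sum_i W_{ni}(x,h)f_{\varepsilon_\alpha|X}(0|X_i)$ plus remainders $O_P((nh^d)^{-1/2}L_n a_n^{1/2}+a_n^2)+O_P(n^{-1/2}h^{-d/4}a_n)$; (c) plugging in $a_n=(nh^d)^{-1/2}L_n$ and $y=\widehat r^0_\alpha(x)$ (licensed by Lemma \ref{lem1}) gives the $(nh^d)^{-3/4}L_n$ rate. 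Because the paper defines $\widehat F_{\varepsilon^\Delta_\alpha|X}$ directly with the shifted indicators, there is no re-centering step at all: the $\Delta_\alpha$-shift enters only through replacing $f_{\varepsilon_\alpha|X}(u-n^{-1/2}h^{-d/4}\Delta_\alpha(X_i)|X_i)$ by $f_{\varepsilon_\alpha|X}(u|X_i)$, producing the single term $O_P(n^{-1/2}h^{-d/4}a_n)$, which is exactly your item (i)/(ii) collapsed into one line. Your route instead treats the Bahadur representation as a black box for the triangular array $\varepsilon^\Delta_{i,\alpha}$ and then bounds three discrepancies from the change of centering; this is workable (your bounds $O_P(n^{-1}h^{-3d/4}L_n)$, $O(n^{-1}h^{-d/2})$ and $O_P((nh^d)^{-3/4}h^{d/8}L_n)$ are all correct and of smaller order), but it carries an extra burden you only assert rather than discharge, namely that the hypotheses of Guerre and Sabbah hold uniformly for the $n$-dependent shifted errors, and it requires care in step (ii) that both the representation and $\widetilde r_\alpha$ are centered at the same kernel-weighted local average (a pointwise centering at $q^\Delta_\alpha(x)$ would leave a smoothing-bias residual of order $n^{-1/2}h^{-d/4}$ times the modulus of continuity of $\Delta_\alpha$, which is not in general $o((nh^d)^{-3/4})$). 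What your approach buys is modularity; what the paper's approach buys is that the shift and the linearization are handled in one uniform expansion, avoiding the centering bookkeeping entirely. Both proofs rest on the same two ingredients: the preliminary rate of Lemma \ref{lem1} and the oscillation bound for the local empirical process that produces the exponent $3/4$.
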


\begin{proof}[Proof of Lemma \ref{lem2}]
Write
\begin{eqnarray}
&& |\widehat r^\Delta_\alpha(x) - \widetilde r^\Delta_\alpha(x)| \nonumber \\
&& \le \frac{1}{\inf_{x,\alpha} f_{\varepsilon_\alpha|X}(0|x)} \Big|\sumi W_{ni}(x,h) f_{\varepsilon_\alpha|X}(0|X_i)
\widehat r^\Delta_\alpha(x) + \sumi W_{ni}(x,h) \big(I(\varepsilon^\Delta_{i,\alpha} \le 0) - \alpha \big) \Big| \nonumber \\
&& =  \frac{1}{\inf_{x,\alpha} f_{\varepsilon_\alpha|X}(0|x)} \Big|\sumi W_{ni}(x,h) f_{\varepsilon_\alpha|X}(0|X_i) \widehat r^\Delta_\alpha(x)  - \widehat F_{\varepsilon^\Delta_\alpha|X}(\widehat r^\Delta_\alpha(x)
|x) + \widehat F_{\varepsilon^\Delta_\alpha|X}(0|x) \Big| \nonumber \\
&& \qquad  + O_P((nh^d)^{-1} ),  \label{form}
\end{eqnarray}where $\widehat F_{\varepsilon^\Delta_\alpha|X}(y|x) = \sum_i W_{ni}(x,h) I(\varepsilon^\Delta_{i,\alpha} \le y)$.  The latter equality follows from the fact that
\begin{eqnarray*}
|\widehat F_{\varepsilon^\Delta_\alpha|X}(\widehat r^\Delta_\alpha(x)|x) - \alpha| & \le & |\widehat F_{\varepsilon^\Delta_\alpha|X}(\widehat r^\Delta_\alpha(x)|x) - \widehat F_{\varepsilon^\Delta_\alpha|X}(\widehat r^\Delta_\alpha(x)-|x) | \\
& = & O_P((nh^d)^{-1} ).
\end{eqnarray*}
The following expansion follows from standard kernel smoothing theory, uniformly for $x \in R_X, \alpha \in A, |y|\le a_n$ and for sequences $a_n$ with $a_n^{-1}=  O(nh^d)$ :
\begin{eqnarray*}
&& \widehat F_{\varepsilon^\Delta_\alpha|X}(y|x) -  \widehat F_{\varepsilon^\Delta_\alpha|X}(0|x)
\nonumber \\
&& \qquad =  \sum_i W_{ni}(x,h) \int_0^y f_{\varepsilon_\alpha|X}(u-n^{-1/2} h^{-d/4} \Delta_\alpha(X_i)|X_i) du + O_P((nh^d)^{-1/2} L_n a_n^{1/2}) \\
&& \qquad =  \sum_i W_{ni}(x,h) \int_0^y f_{\varepsilon_\alpha|X}(u|X_i) du + O_P((nh^d)^{-1/2} L_n a_n^{1/2}) + O_P(n^{-1/2} h^{-d/4} a_n) \\
&& \qquad =  y \sum_i W_{ni}(x,h) f_{\varepsilon_\alpha|X}(0|X_i)  + O_P((nh^d)^{-1/2} L_n a_n^{1/2}+ a_n^2) + O_P(n^{-1/2} h^{-d/4} a_n).
\end{eqnarray*}
We now apply this bound to $a_n = (nh^d)^{-1/2} L_n$ and $y = \widehat r^\Delta_\alpha(x)$, which is possible thanks to Lemma \ref{lem1}.  This combined with (\ref{form}) shows the statement of the lemma.
\end{proof}

\smallskip
\bigskip

For proving Theorem \ref{test}, we will make use of the following decomposition, which follows from Lemma \ref{lem2a} :
\begin{eqnarray*}
\widehat T_A  \hspace*{-.5cm} && =   \int_A \int_{R_X} \Big[  \widehat r^\Delta_\alpha(x)-  (\widehat \theta(\alpha)
-  \theta_0(\alpha))^\top \gamma_\alpha(x) \Big]^2 w(x,\alpha) \, dx \, d\alpha + o_P(n^{-1} h^{-d/2})\\
&& =   \int_A \int_{R_X} \Big[  \widehat r^\Delta_\alpha(x)^2-\overline r^\Delta_{\alpha}(x) ^2\Big] w(x,\alpha) \, dx \, d\alpha \\
&& \qquad + \int_A \int_{R_X} E\Big \{\overline r^\Delta_{\alpha}(x) ^2- \widetilde r^\Delta_{\alpha}(x) ^2\Big| N(x) \Big\} w(x,\alpha) \, dx \, d\alpha \\
&& \qquad + \int_A \int_{R_X}  \Big[ \overline r^\Delta_{\alpha}(x) ^2- \widetilde r^\Delta_{\alpha}(x) ^2  -E\Big \{\overline r^\Delta_{\alpha}(x) ^2- \widetilde r^\Delta_{\alpha}(x) ^2\Big| N(x) \Big\} \Big] w(x,\alpha) \, dx \, d\alpha \\
&& \qquad - 2   \int_A \int_{R_X} \Big[  (\widehat r^\Delta_\alpha(x)- \widetilde r^\Delta_{\alpha}(x)) \big\{ (\widehat \theta(\alpha)
-  \theta_0(\alpha))^\top \gamma_\alpha(x) \big\}\Big] w(x,\alpha) \, dx \, d\alpha \\
&& \qquad - 2   \int_A \int_{R_X} \Big[   \widetilde r^\Delta_{\alpha}(x)\big\{ (\widehat \theta(\alpha)
-  \theta_0(\alpha))^\top \gamma_\alpha(x) \big\}\Big] w(x,\alpha) \, dx \, d\alpha \\
&& \qquad +   \int_A \int_{R_X} \Big[  (\widehat \theta(\alpha)
-  \theta_0(\alpha))^\top \gamma_\alpha(x) \Big]^2 w(x,\alpha) \, dx \, d\alpha \\
&& \qquad + \int_A \int_{R_X}  \widetilde r^\Delta_{\alpha}(x)^2 w(x,\alpha) \, dx \, d\alpha + o_P(n^{-1} h^{-d/2})\\
&& = T_{n1} + ... + T_{n7} + o_P(n^{-1} h^{-d/2}).
\end{eqnarray*}

\begin{lemma} \label{lem3}
Suppose that the assumptions of Theorem \ref{test} are satisfied. Then,
$$ T_{n1} = o_P(a_n), $$
for any sequence $\{a_n\}$ of positive constants tending to zero as $n \rightarrow \infty$.
\end{lemma}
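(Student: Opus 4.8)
The key observation is that $T_{n1}$ measures precisely the contribution of the truncation built into $\widehat r_\alpha^\star$, and this truncation is, with probability tending to one, never activated. First I would rewrite $T_{n1}$ using the definition of $\widehat r_\alpha^\star(x)$: since $\widehat r_\alpha^\star(x)$ coincides with $\widehat r^0_\alpha(x)$ on the event $\{|\widehat r^0_\alpha(x)| \le L_n (nh^d)^{-1/2}\}$ and equals zero otherwise, the integrand $\widehat r^0_\alpha(x)^2 - \widehat r_\alpha^\star(x)^2$ vanishes on the former event and equals $\widehat r^0_\alpha(x)^2$ on its complement. Hence
$$ T_{n1} = \int_A \int_{R_X} \widehat r^0_\alpha(x)^2 \, I\!\big(|\widehat r^0_\alpha(x)| > L_n (nh^d)^{-1/2}\big) \, w(x,\alpha) \, dx \, d\alpha. $$

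Next I would invoke Lemma \ref{lem1}, which gives $\sup_{\alpha \in A}\sup_{x\in R_X} |\widehat r^0_\alpha(x)| = O_P((nh^d)^{-1/2} L_n)$. The one point requiring care is the matching of the two sequences named $L_n$: the threshold $L_n = (\log n)^{C}$ in the definition of $\widehat r_\alpha^\star$ may be taken with a strictly larger exponent $C$ than the $L_n$ appearing in the bound of Lemma \ref{lem1}, which is legitimate under the stated convention that $C$ is chosen large enough. With such a choice $(\log n)^{C}$ dominates any fixed multiple of the smaller power of $\log n$ coming from Lemma \ref{lem1}, so that the event
$$ E_n = \Big\{ \sup_{\alpha\in A}\sup_{x\in R_X} |\widehat r^0_\alpha(x)| \le L_n (nh^d)^{-1/2} \Big\} $$
satisfies $P(E_n) \to 1$.

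On $E_n$ the indicator in the displayed expression for $T_{n1}$ is identically zero for all $\alpha$ and $x$, so $T_{n1} = 0$ on $E_n$. Consequently $P(T_{n1} = 0) \ge P(E_n) \to 1$, i.e.\ $T_{n1}$ vanishes with probability tending to one. Finally, for any deterministic sequence $a_n \downarrow 0$ and any $\varepsilon > 0$ one has $P(|T_{n1}| > \varepsilon a_n) \le P(T_{n1} \ne 0) = 1 - P(E_n) \to 0$, which is exactly the claim $T_{n1} = o_P(a_n)$.

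I would expect no genuine obstacle here: the whole argument reduces to the bookkeeping of the two $L_n$ sequences, and the substantive work — the uniform rate for $\widehat r^0_\alpha$ — is already delivered by Lemma \ref{lem1}. Indeed the purpose of introducing the truncated quantity $\widehat r_\alpha^\star$ seems to be precisely to make this step automatic, so that in the more delicate later terms $T_{n2}$ and $T_{n3}$ one can work throughout with a deterministically bounded object.
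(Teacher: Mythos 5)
Your proof is correct and follows essentially the same route as the paper: both reduce $T_{n1}$ to the integral of $\widehat r^0_\alpha(x)^2$ over the set where the truncation is active and then use Lemma \ref{lem1} to conclude that this set is empty with probability tending to one. Your explicit remark about choosing the exponent in the truncation threshold $L_n$ larger than the one delivered by Lemma \ref{lem1} is a point the paper leaves implicit under its stated convention on the constants $C$, but it is the same argument.
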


\begin{proof}[Proof of Lemma \ref{lem3}] Note that
\begin{eqnarray*}
T_{n1} \hspace*{-.5cm} && \le \sup_{\alpha \in A} \sup_{x \in R_X} |\widehat r^\Delta_\alpha(x)|^2 \int_A \int_{R_X} I \Big(|\widehat r^\Delta_\alpha(x)| > L_n (nh^d)^{-1/2} \Big) w(x,\alpha) \, dx \, d\alpha.
\end{eqnarray*}
It is easily seen from Lemma \ref{lem1} that
$$ \int_A \int_{R_X} I \Big(|\widehat r^\Delta_\alpha(x)| > L_n (nh^d)^{-1/2} \Big) w(x,\alpha) \, dx \, d\alpha  = o_P(a_n), $$
for any $a_n \rightarrow 0$, since the indicator inside the integral will be zero from some point on.  \end{proof}

From Corollary \ref{corlem4} we get the following result.
\begin{lemma} \label{lem4}
Suppose that the assumptions of Theorem \ref{test} are satisfied.  Then,
\begin{eqnarray*}
\sup_{\alpha \in A} \sup_{x \in R_X}  \Big|E\Big\{\overline r^\Delta_{\alpha}(x)^2 - \widetilde r^\Delta_{\alpha}(x)^2 \Big| N(x) \Big\} \Big| = o_P((nh^{d/2})^{-1}),
\end{eqnarray*}
and hence, $
T_{n2} =  o_P((nh^{d/2})^{-1}).$
\end{lemma}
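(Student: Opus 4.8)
The plan is to bypass the pointwise Bahadur bound of Lemma \ref{lem2}. Squaring it gives a difference of order $(nh^d)^{-3/4}(nh^d)^{-1/2}=(nh^d)^{-5/4}$, which is only $o((nh^{d/2})^{-1})$ when $nh^{3d}\to\infty$, a stronger requirement than the assumed $nh^{3d/2}/L_n\to\infty$ of (B3). Instead I would evaluate the conditional second moment $E\{\widehat r^{\star}_\alpha(x)^2\mid N(x)\}$ directly, by inverting the quantile problem into a statement about a weighted empirical distribution function and then matching its Edgeworth expansion, term by term, with that of $E\{\widetilde r_\alpha(x)^2\mid N(x)\}$.

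First I would fix $x$ and $\alpha$ and condition on $N(x)=\ell$ together with the design points $\{X_i:i\in{\cal N}(x)\}$; since $K$ has support $[-1,1]$ this fixes all weights $W_{ni}(x,h)$ and renders the indicators $I(\varepsilon^\Delta_{i,\alpha}\le t)$ independent Bernoulli variables with success probabilities $F_{\varepsilon^\Delta_\alpha|X}(t|X_i)$. Thus both $\widehat F_{\varepsilon^\Delta_\alpha|X}(t|x)=\sum_i W_{ni}(x,h)I(\varepsilon^\Delta_{i,\alpha}\le t)$ and the numerator of $\widetilde r_\alpha(x)$ are weighted sums of independent, non-identically distributed Bernoullis whose effective number of summands is of order $nh^d$. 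Writing $S(t)$ for the centered version of $\widehat F_{\varepsilon^\Delta_\alpha|X}(t|x)$, the quantile identity already used in the proof of Lemma \ref{lem2} gives, up to boundary jumps of order $(nh^d)^{-1}$, the inversion $P(\widehat r^0_\alpha(x)>t\mid N(x))=P(\widehat F_{\varepsilon^\Delta_\alpha|X}(t|x)<\alpha\mid N(x))$.

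Next I would represent the second moment through the tail formula $E\{\widehat r^{\star}_\alpha(x)^2\mid N(x)\}=\int_0^{c_n}2t\,[P(\widehat r^0_\alpha(x)>t\mid N(x))+P(\widehat r^0_\alpha(x)<-t\mid N(x))]\,dt$, with truncation level $c_n=L_n(nh^d)^{-1/2}$, and insert the inversion. To each probability $P(\widehat F_{\varepsilon^\Delta_\alpha|X}(t|x)<\alpha\mid N(x))$ I would apply a higher-order Edgeworth expansion for the weighted sum $S(t)$; the weights $W_{ni}(x,h)$ are distinct, which smears the lattice of the individual Bernoullis and legitimises the expansion. Integrating in $t$ produces an explicit expansion of $E\{\widehat r^{\star}_\alpha(x)^2\mid N(x)\}$ in the conditional mean of $\widehat F_{\varepsilon^\Delta_\alpha|X}(\cdot|x)$, the variance of $S(0)$, and the higher cumulants, with remainder $O(n^{-2}h^{-2d})$. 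Carrying out the identical computation for the linear statistic $\widetilde r_\alpha(x)$ — whose law is governed by the same conditional mean and the same variance of $S(0)$ — I would check that all the explicit terms coincide, so that only the Edgeworth remainder survives in the difference; the effect of replacing $\widehat r^0_\alpha$ by $\widehat r^\star_\alpha$ and of the corresponding tail of $\widetilde r_\alpha$ is negligible, since each exceeds $c_n$ only with negligibly small probability by the choice of $L_n$ in Lemma \ref{lem1}. This yields $\sup_{\alpha\in A}\sup_{x\in R_X}|E\{\widehat r^{\star}_\alpha(x)^2-\widetilde r_\alpha(x)^2\mid N(x)\}|=O_P(n^{-2}h^{-2d})$, and $n^{-2}h^{-2d}=o((nh^{d/2})^{-1})$ exactly because (B3) gives $nh^{3d/2}/L_n\to\infty$, which is the first display. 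Finally, $T_{n2}=\int_A\int_{R_X}E\{\widehat r^{\star}_\alpha(x)^2-\widetilde r_\alpha(x)^2\mid N(x)\}w(x,\alpha)\,dx\,d\alpha$ is bounded by this uniform rate times $\int_A\int_{R_X}|w(x,\alpha)|\,dx\,d\alpha<\infty$, so $T_{n2}=o_P((nh^{d/2})^{-1})$.

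The main obstacle is the Edgeworth step: one must establish the expansion for the triangular array of weighted, non-identically distributed Bernoulli indicators uniformly in $x\in R_X$, $\alpha\in A$, and $t$ in the shrinking band $|t|\le c_n$, with a remainder small enough (order $(nh^d)^{-2}$ after integration in $t$) that the lower-order terms genuinely cancel against those of $\widetilde r_\alpha$. Controlling the lattice effect and securing this uniformity — rather than any single algebraic identity — is where the real work lies, and is precisely the reason a direct Bahadur argument, which discards this cancellation, only delivers the weaker requirement $nh^{3d}\to\infty$.
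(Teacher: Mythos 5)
Your overall strategy is the paper's: invert the event $\{\widehat r^0_\alpha(x)\le t\}$ into an event about a weighted empirical distribution function, write the conditional second moment as a tail integral over the band $|t|\le L_n(nh^d)^{-1/2}$, apply a higher-order Edgeworth expansion, and let the explicit terms cancel against those of $\widetilde r_\alpha(x)^2$. But there is a genuine gap at exactly the point you flag as ``the main obstacle'', and your proposed fix would fail. By conditioning on the design points $\{X_i : i\in{\cal N}(x)\}$ you freeze the weights, so each summand $W_{ni}(x,h)\{I(\varepsilon^\Delta_{i,\alpha}\le t)-F_{\varepsilon^\Delta_\alpha|X}(t|X_i)\}$ is a two-point random variable and the whole sum is supported on a finite set. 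Cram\'er's condition then fails, and no Edgeworth expansion with remainder $O((nh^d)^{-1})$ --- let alone the $O((nh^d)^{-2})$ you need after integration --- is available; ``the weights are distinct, which smears the lattice'' is not a theorem (it is false for equal weights, and for a smooth kernel the attainable error depends on the arithmetic structure of the realized weights, which cannot be controlled uniformly in $x$ and $\alpha$). The paper resolves this by \emph{not} conditioning on the design: it conditions only on $X_i\in{\cal N}^-(x)$ and $N^-(x)=m$, so that $K((x-X_i)/h)$ stays random with a density; the otherwise odd-looking requirement in (B3) that $k^\prime(k^{-1}(u))\ge c\min\{u^{\kappa},(k(0)-u)^{\kappa}\}$ exists precisely to bound the density of these summands and verify conditions (19.27), (19.29) and (19.30) of Theorem 19.3 in Bhattacharya and Rao (1976) uniformly.

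Two further points are glossed over. First, the cancellation against $E\{\widetilde r_\alpha(x)^2\mid\cdot\}$ is not merely a matter of matching mean and variance: the $m^{-1/2}$ skewness term in the Edgeworth expansion has no counterpart in the second moment of the linear statistic and, taken at face value, contributes at order $L_n(nh^d)^{-3/2}$, which is \emph{not} $o(n^{-1}h^{-d/2})$ under (B3) alone (it would require $nh^{2d}\to\infty$). It disappears only because the tail formula integrates $P(\widehat r^-_\alpha(x)>vm_0^{-1/2})+P(\widehat r^-_\alpha(x)\le -vm_0^{-1/2})$ and the odd-in-$u$ correction cancels between the two tails, as in (\ref{edg}); your proposal supplies no mechanism for this. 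Second, under the local alternative one must first recenter by the kernel-weighted drift $\Delta^h_\alpha(x)$ and control the resulting cross term separately, as in (\ref{July1})--(\ref{July2}). With these repairs the paper gets the remainder $O(L_n n^{-3/2}h^{-5d/4})$ rather than your claimed $O(n^{-2}h^{-2d})$; both suffice for the lemma under $nh^{3d/2}/L_n\to\infty$, but the route through frozen-design Bernoulli sums does not close.
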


At this point we needed the additional assumption $n h^{3d/2}/L_n^*\to \infty$ for the case that $\Delta_\alpha\not \equiv 0$. We now shortly outline what happens if we are on the alternative and if this assumption does not hold. Note that for $|m-m_0| = o(m_0)$ \begin{eqnarray*} &&E\Big\{\overline r^\Delta_{\alpha}(x)^2 -\widetilde r^\Delta_{\alpha}(x)^2 \Big| N(x)=m \Big\} \\ &&=  E\Big\{(\overline r^{\Delta,-}_{\alpha}(x)+\Delta_\alpha^h(x)) ^2 -(\widetilde r^{\Delta,-}_{\alpha}(x)+\Delta_\alpha^h(x))^2 \Big| N(x)=m \Big\}  \\ &&=  E\Big\{\overline r^{\Delta,-}_{\alpha}(x)^2 -\widetilde r^{\Delta,-}_{\alpha}(x)^2 \Big| N(x)=m \Big\} + 2  \Delta_\alpha^h(x) E\Big\{\overline r^{\Delta,-}_{\alpha}(x) \Big| N(x)=m \Big\} .
\end{eqnarray*}
For the first term on the right hand side we get from Corollary \ref{corlem4} that it is of order $o((nh^{d/2})^{-1})$. For $\Delta_\alpha^h(x)$ one can show that it is equal to $n^{-1/2} h^{-d/4} \Delta_\alpha(x) + O(L_n n^{-1/2} h^{-d/4} h^2)$. For the term $E\Big\{\overline r^{\Delta,-}_{\alpha}(x) \Big| N(x)=m \Big\}$ one can show that it is equal to $(nh^d)^{-1} \rho(x) + O(L_n (nh^d)^{-3/2})$ for some function $\rho$ that does not depend on  the function $\Delta_\alpha$. This can be done by using the arguments based on Edgeworth expansions that were central in the proof of Theorem  \ref{theolem4}. This gives that 
$$T_{n2} = n^{-3/2} h^{-5d/4}  \int_A \int_{R_X} \Delta_\alpha(x)\rho(x) w(x,\alpha) \, dx \, d\alpha + o(n^{-3/2} h^{-5d/4}) + o((nh^{d/2})^{-1}).$$
Suppose now that $n h^{3d/2} \to 0$. Then it holds that  $ (nh^{d/2})^{-1}= o(n^{-3/2} h^{-5d/4} )$ and 
using Lemma \ref {lem9} we get that 
\begin{eqnarray*} T_n &=& n^{-1} h^{-d} K^{(2)}(0)  \int_A \alpha(1-\alpha) \int_{R_X} \frac{w(x,\alpha)}{f_X(x) f^2_{\varepsilon_\alpha|X}(0|x)} dx \, d\alpha\\ && \qquad  + n^{-3/2} h^{-5d/4}  \int_A \int_{R_X} \Delta_\alpha(x)\rho(x) w(x,\alpha) \, dx \, d\alpha + o_P(n^{-3/2} h^{-5d/4}).\end{eqnarray*}
This implies that the test rejects for large values of $ \int_A \int_{R_X} \Delta_\alpha(x)\rho(x) w(x,\alpha) \, dx \, d\alpha$. Thus, in this high-dimensional setting the test behaves like a linear test and not like an omnibus test.

\begin{lemma} \label{lem5}
Suppose the assumptions of Theorem \ref{test} are satisfied. Then,
\begin{eqnarray*}
T_{n3} = O_P( L_n n^{-5/4} h^{-3d/4}) =  o_P((nh^{d/2})^{-1}).
\end{eqnarray*}
\end{lemma}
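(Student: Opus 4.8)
The plan is to exploit that $T_{n3}$ is a sum of \emph{centered} contributions and bound it through its variance. Writing $Z_\alpha(x) = \widehat r_\alpha^\star(x)^2 - \widetilde r_\alpha(x)^2 - E\{\widehat r_\alpha^\star(x)^2 - \widetilde r_\alpha(x)^2 \mid N(x)\}$ we have $T_{n3} = \int_A\int_{R_X} Z_\alpha(x)\, w(x,\alpha)\,dx\,d\alpha$, and since $E\{Z_\alpha(x)\mid N(x)\}=0$ it follows that $E(Z_\alpha(x))=0$ and $E(T_{n3})=0$. The stated order then follows from Chebyshev once I show $\Var(T_{n3}) = O(n^{-5/2}h^{-3d/2}L_n)$; the final claim $T_{n3}=o_P((nh^{d/2})^{-1})$ then holds because $L_n n^{-5/4}h^{-3d/4} = L_n (nh^d)^{-1/4}(nh^{d/2})^{-1} = o((nh^{d/2})^{-1})$ by Assumption (B3), exactly as in Lemma \ref{lem4}.

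For the variance I would split $R_X$ along the cells $I_{jk}$ and set $U_{jk}^\alpha = \int_{I_{jk}\cap R_X} Z_\alpha(x)\, w(x,\alpha)\,dx$, so that
\[
\Var(T_{n3}) = \int_A\int_A \sum_{j,k}\sum_{j',k'} E\big(U_{jk}^\alpha U_{j'k'}^\beta\big)\, d\alpha\, d\beta.
\]
The key step is that $E(U_{jk}^\alpha U_{j'k'}^\beta)=0$ whenever the enlarged cells $I_{jk}^*$ and $I_{j'k'}^*$ are disjoint. Indeed, for $x\in I_{jk}$ both $\widehat r_\alpha^\star(x)$ and $\widetilde r_\alpha(x)$ depend on the sample only through the observations indexed by ${\cal N}_{jk}$, and $N(x)=N_{jk}$ is the count of that same cell. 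Conditioning on the entire vector of cell counts, observations falling in disjoint cells are independent, while the centering in $Z_\alpha$ subtracts precisely $E\{\cdot\mid N_{jk}\}$, so that $E(U_{jk}^\alpha\mid\text{all counts})=0$. The conditional expectation of the product therefore factorizes to zero, and so does the unconditional one. This orthogonality is exactly what conditioning on $N(x)$ in the decomposition of $\widehat T_A$ was designed to produce.

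Since each $I_{jk}^*$ has side $3h$, it meets only $O(1)$ other enlarged cells, and there are $O(h^{-d})$ cells in total, so at most $O(h^{-d})$ pairs survive. For each such pair Cauchy--Schwarz gives $|E(U_{jk}^\alpha U_{j'k'}^\beta)|\le \max_{j,k,\alpha} E((U_{jk}^\alpha)^2)$, and because $I_{jk}$ has volume $h^d$,
\[
E\big((U_{jk}^\alpha)^2\big) \le C h^{2d}\, \sup_{x\in R_X,\ \alpha\in A} E\big(Z_\alpha(x)^2\big) \le C h^{2d}\, \sup_{x,\alpha} E\big((\widehat r_\alpha^\star(x)^2 - \widetilde r_\alpha(x)^2)^2\big).
\]
Integrating the bounded variables $\alpha,\beta$ over $A$ then yields $\Var(T_{n3}) = O\big(h^{-d}\cdot h^{2d}\cdot \sup_{x,\alpha}E((\widehat r_\alpha^\star(x)^2-\widetilde r_\alpha(x)^2)^2)\big)$.

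The remaining and genuinely technical ingredient, which I expect to be the main obstacle, is the moment estimate $\sup_{x,\alpha} E\big((\widehat r_\alpha^\star(x)^2 - \widetilde r_\alpha(x)^2)^2\big) = O((nh^d)^{-5/2}L_n)$, an $L^2$-strengthening of the in-probability bound of Lemma \ref{lem2}. I would factor $\widehat r_\alpha^\star(x)^2 - \widetilde r_\alpha(x)^2 = (\widehat r_\alpha^\star(x) - \widetilde r_\alpha(x))(\widehat r_\alpha^\star(x) + \widetilde r_\alpha(x))$ and work on the event $G$ that all cell counts are of the expected order $\asymp nh^d$ and that $\sup_{x,\alpha}(|\widehat r_\alpha^0(x)|\vee|\widetilde r_\alpha(x)|)\le L_n(nh^d)^{-1/2}$. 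On $G$ the truncation does not bind, so $\widehat r_\alpha^\star=\widehat r_\alpha^0$, the sum factor is deterministically $O(L_n(nh^d)^{-1/2})$, and a second-moment version of Lemma \ref{lem2} gives $E((\widehat r_\alpha^0-\widetilde r_\alpha)^2\mathbf{1}_G)=O((nh^d)^{-3/2}L_n)$; the product of the two is $(nh^d)^{-5/2}L_n$. On $G^c$, whose probability can be made smaller than any power of $n$ via the exponential count bound used in Lemma \ref{lem4} together with the maximal bounds of Lemma \ref{lem1}, the integrand is bounded by a constant and contributes negligibly. Plugging this back gives $\Var(T_{n3}) = O(h^d(nh^d)^{-5/2}L_n) = O(n^{-5/2}h^{-3d/2}L_n)$, hence $T_{n3}=O_P(n^{-5/4}h^{-3d/4}L_n)$. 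The delicate point is precisely obtaining the moment (rather than in-probability) version of Lemma \ref{lem2}, uniformly in $x$ and $\alpha$, while keeping the rare bad events below order $(nh^d)^{-5/2}$; this is where the truncation defining $\widehat r_\alpha^\star$ and the restriction to cells carrying $\asymp nh^d$ points do the real work.
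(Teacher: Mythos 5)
Your proposal is correct in outline and rests on exactly the same two ideas as the paper's proof: the partition of $R_X$ into cells $I_{jk}$ of side $h$, and the observation that the conditional centering by $E\{\cdot\mid N(x)\}$ makes contributions from cells whose enlarged versions $I_{jk}^*$ are disjoint uncorrelated, so that the naive bound improves by a factor of order $h^{d/2}$. The execution differs. The paper splits $T_{n3}=T_{n31}+T_{n32}+T_{n33}$, where for each fixed $k$ the enlarged cells $\{I_{jk}^*\}_j$ form an (essentially) disjoint family; each $T_{n3k}$ is then a sum of $O(h^{-d})$ conditionally independent, conditionally centered summands, each uniformly of order $O_P(L_n n^{-5/4}h^{-d/4})$ by Lemma \ref{lem2}, Lemma \ref{lem4} and the sup bound on $\widetilde r_\alpha$, which yields $O_P(L_n n^{-5/4}h^{-3d/4})$ without ever computing a second moment. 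You instead bound $\Var(T_{n3})$ directly via pairwise orthogonality and Chebyshev. What your route buys is a cleaner bookkeeping of the cross terms; what it costs is the need for a genuine $L^2$ version of Lemma \ref{lem2}, i.e.\ $\sup_{x,\alpha}E\big((\widehat r^0_\alpha(x)-\widetilde r_\alpha(x))^2\mathbf{1}_G\big)=O((nh^d)^{-3/2}L_n)$. You correctly flag this as the main obstacle, and your good-event strategy works provided the Bahadur bound of Guerre and Sabbah holds outside an event of probability $O(n^{-C})$ for $C$ arbitrarily large (which is what makes the $L_n=(\log n)^C$ convention effective); this is precisely the extra work the paper's $O_P$-based argument sidesteps.

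One imprecision worth fixing: your claim that $E(U_{jk}^\alpha\mid\mbox{all counts})=0$ is not literally correct, because the enlarged cells $I^*_{j1},I^*_{j2},I^*_{j3}$ overlap, and because conditioning on the counts of all basic cells gives the observations in $I^*_{jk}$ a finer conditional law than conditioning on $N_{jk}$ alone, so the centering term $E\{\cdot\mid N_{jk}\}$ is not the conditional mean under that richer conditioning. The orthogonality you actually need survives: for two cells whose enlarged versions are disjoint, condition only on the pair $(N_{jk},N_{j'k'})$; given these, the observations in the two enlarged cells are independent with the laws used in the centering, so $E(U_{jk}^\alpha U_{j'k'}^\beta)=E\big[E(U_{jk}^\alpha\mid N_{jk})\,E(U_{j'k'}^\beta\mid N_{j'k'})\big]=0$. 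The paper's device of grouping the cells into three families indexed by $k$ is exactly what makes the relevant enlarged cells within each family disjoint, so that this coarser conditioning is available simultaneously for a whole family.
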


\begin{proof}[Proof of Lemma \ref{lem5}] For simplicity of exposition of the argument, let us assume that $X_i$ is one-dimensional. For arbitrary $j$ and for $k\in\{1,2,3\}$, define
$$U_{jk} = \int_A \int_{I_{jk}} \Big[\overline r^\Delta_{\alpha}(x)^2 -\widetilde r^\Delta_{\alpha}(x)^2 - E\Big\{\overline r^\Delta_{\alpha}(x)^2 -\widetilde r^\Delta_{\alpha}(x)^2 \Big| N(x) \Big\} \Big] w(x, \alpha) \, dx \, d\alpha. $$
Then we can write $T_{n3} = T_{n31} + T_{n32} + T_{n33}$ with $T_{n3k} = \sum_j U_{jk}$ ($k=1,2,3$).  The terms $T_{n31}$, $T_{n32}$ and $T_{n33}$ are sums of $O(h^{-1})$ conditionally independent summands. The summands are uniformly bounded by a term of order $O_P(L_n n^{-5/4} h^{-1/4})$.  This follows from Lemma \ref{lem4}, from the fact that $\sup_{\alpha \in A}\sup_x|\widetilde r^\Delta_\alpha(x)| = O_P(L_n(nh)^{-1/2})$, see also (\ref{tildestar}), and from the Bahadur representation for $\overline r^\Delta_{\alpha}(x)$, given in Lemma \ref{lem2}.   It now follows that $T_{n3k}=O_P(L_n n^{-5/4} h^{-3/4})$, which implies the statement of the lemma for $d=1$. For $d>1$ one can use the same approach.  \end{proof}

\begin{lemma} \label{lem6}
Suppose the assumptions of Theorem \ref{test} are satisfied. Then,
\begin{eqnarray*}
T_{n4} =    o_P((nh^{d/2})^{-1}).
\end{eqnarray*}
\end{lemma}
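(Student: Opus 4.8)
\noindent The plan is to control $T_{n4}$ by the crude product of uniform bounds on its two factors; no cancellation in the spatial integral is needed here. Recall from the decomposition preceding Lemma \ref{lem3} that
$$T_{n4} = -2\int_A \int_{R_X} \Big[\big(\widehat r^0_\alpha(x) - \widetilde r_\alpha(x)\big)\big\{(\widehat\theta(\alpha) - \theta_0(\alpha))^\top \gamma_\alpha(x)\big\}\Big]\, w(x,\alpha)\, dx\, d\alpha.$$
By Assumption (B4) the functions $\gamma_\alpha(x)$ and $w(x,\alpha)$ are continuous on the compact set $A\times R_X$, hence uniformly bounded there, and $A\times R_X$ has finite measure. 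Pulling these out of the integral gives
$$|T_{n4}| \le C\,\Big(\sup_{\alpha\in A}\sup_{x\in R_X}\big|\widehat r^0_\alpha(x) - \widetilde r_\alpha(x)\big|\Big)\Big(\sup_{\alpha\in A}\big\|\widehat\theta(\alpha) - \theta_0(\alpha)\big\|\Big).$$

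Next I would insert the two rates that are already available: the uniform Bahadur bound $\sup_{\alpha,x}|\widehat r^0_\alpha(x) - \widetilde r_\alpha(x)| = O_P((nh^d)^{-3/4} L_n)$ of Lemma \ref{lem2}, and the parametric rate $\sup_{\alpha}\|\widehat\theta(\alpha) - \theta_0(\alpha)\| = O_P(n^{-1/2+c})$ of Assumption (B5). Their product gives $T_{n4} = O_P(n^{-5/4+c} h^{-3d/4} L_n)$, and it then remains only to check that this is $o_P((nh^{d/2})^{-1})$, that is, that $n^{-1/4+c} h^{-d/4} L_n \to 0$. This is exactly where the bandwidth condition of Assumption (B3) enters: since $nh^{3d/2}/L_n\to\infty$, eventually $h^{-3d/2}\le n/L_n$, so that $h^{-d/4}=(h^{-3d/2})^{1/6}\le (n/L_n)^{1/6}$, and therefore $n^{-1/4+c} h^{-d/4} L_n \le n^{-1/12+c} L_n^{5/6}\to 0$ for $c$ small enough, $L_n^{5/6}$ being only polylogarithmic. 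This establishes the lemma.

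I do not expect a genuine obstacle in this argument. The only delicate point worth flagging is the contrast with the neighbouring term $T_{n5} = -2\int_A\int_{R_X} \widetilde r_\alpha(x)\{(\widehat\theta(\alpha)-\theta_0(\alpha))^\top\gamma_\alpha(x)\} w(x,\alpha)\,dx\,d\alpha$: there the factor $\widetilde r_\alpha$ is of the larger order $(nh^d)^{-1/2}$, and the crude product bound $n^{-1+c} h^{-d/2} L_n$ fails to beat $n^{-1}h^{-d/2}$, so that term genuinely requires exploiting the asymptotic independence of the kernel smoother across $x$. For $T_{n4}$ the Bahadur remainder $\widehat r^0_\alpha - \widetilde r_\alpha$ is smaller by the factor $(nh^d)^{-1/4}$, and this extra decay is precisely what makes the sup-norm argument sufficient here, without any averaging over the spatial integral.
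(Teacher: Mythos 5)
Your proof is correct and follows exactly the paper's (one-line) argument: the paper likewise bounds $T_{n4}$ by $O_P(L_n (nh^d)^{-3/4} n^{-1/2+c})$ via Lemma \ref{lem2} and Assumption (B5), and concludes it is $o_P((nh^{d/2})^{-1})$. You have merely spelled out the bandwidth arithmetic that the paper leaves implicit.
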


\begin{proof}[Proof of Lemma \ref{lem6}]  This is obvious, since $T_{n4} = O_P(L_n (nh^d)^{-3/4} n^{-{1\over 4}}h^{{d\over 4}}/L_n^*)=o_P((nh^{d/2})^{-1})$, thanks to Assumption (B5) and Lemma \ref{lem2}.
\end{proof}

\begin{lemma} \label{lem7}
Suppose the assumptions of Theorem \ref{test} are satisfied. Then,
\begin{eqnarray*}
T_{n5} =   o_P((nh^{d/2})^{-1}).
\end{eqnarray*}
\end{lemma}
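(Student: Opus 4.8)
The plan is to exploit the bilinear structure
\[ T_{n5} = -2\int_A \big(\widehat\theta(\alpha)-\theta_0(\alpha)\big)^\top V_\alpha \, d\alpha, \qquad V_\alpha := \int_{R_X} \widetilde r_\alpha(x)\,\gamma_\alpha(x)\,w(x,\alpha)\,dx, \]
and to estimate the two factors separately. By Assumption (B5) the parametric factor satisfies $\sup_{\alpha\in A}\|\widehat\theta(\alpha)-\theta_0(\alpha)\| = O_P(n^{-1/2+c})$, so it remains to show that $\sup_{\alpha\in A}\|V_\alpha\|$ is of much smaller order than a crude pointwise estimate would give. Indeed, simply inserting $\sup_{x,\alpha}|\widetilde r_\alpha(x)| = O_P((nh^d)^{-1/2}L_n)$ from (\ref{tildestar}) would leave an uncontrolled factor $h^{-d/2}L_n n^{c}$; the required gain must come from integrating the Bahadur term $\widetilde r_\alpha$ over $x$, which is exactly the variance-reduction mechanism underlying the whole paper.

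Accordingly, the core of the proof is a sharp bound on $V_\alpha$, obtained by splitting $\widetilde r_\alpha(x)$ into its conditional mean and a fluctuation, $\widetilde r_\alpha(x) = E\{\widetilde r_\alpha(x)\mid X_1,\dots,X_n\} + \zeta_\alpha(x)$. A one-term Taylor expansion of $F_{\varepsilon_\alpha|X}(\cdot|x)$ at $0$, together with $q_\alpha(\varepsilon_\alpha|X)=0$ and $\varepsilon^\Delta_{i,\alpha}=\varepsilon_{i,\alpha}+n^{-1/2}h^{-d/4}\Delta_\alpha(X_i)$, yields $E\{\widetilde r_\alpha(x)\mid X_1,\dots,X_n\} = n^{-1/2}h^{-d/4}\Delta_\alpha(x)(1+o_P(1)) + O_P(n^{-1}h^{-d/2})$, so the conditional-mean part contributes $O_P(n^{-1/2}h^{-d/4})$ to $V_\alpha$. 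For the fluctuation I would compute the conditional variance of $\int_{R_X}\zeta_\alpha(x)\gamma_\alpha(x)w(x,\alpha)\,dx$ given $X_1,\dots,X_n$: because $\widetilde r_\alpha(x)$ and $\widetilde r_\alpha(x')$ use disjoint kernel windows whenever $\|x-x'\|>2h$, the integrand is conditionally uncorrelated at such pairs, while on the band $\|x-x'\|\le 2h$ the covariance is $O((nh^d)^{-1})$; integrating over the $O(h^d)$-thick band gives total variance $O(n^{-1})$, hence a fluctuation of order $O_P(n^{-1/2})$.

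Combining the two pieces, the conditional mean dominates and $\sup_{\alpha\in A}\|V_\alpha\| = O_P(n^{-1/2}h^{-d/4})$, where uniformity in $\alpha$ follows from the continuity and Lipschitz conditions in (B4)--(B5) by a standard maximal-inequality (chaining) argument, at the price of the usual $L_n$ factors. Plugging this back gives
\[ T_{n5} = O_P(n^{-1/2+c})\cdot O_P(n^{-1/2}h^{-d/4}) = O_P(n^{-1+c}h^{-d/4}), \]
and since the ratio of this to $(nh^{d/2})^{-1}=n^{-1}h^{-d/2}$ equals $n^{c}h^{d/4}\to 0$ for $c$ small enough (using (B3) to control the interplay between $n^{c}$ and the powers of $h$, exactly as in Lemma \ref{lem6}), we conclude $T_{n5}=o_P((nh^{d/2})^{-1})$. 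I expect the fluctuation bound to be the main obstacle: making rigorous that integration over $x$ lowers the pointwise order $(nh^d)^{-1/2}$ to $n^{-1/2}$ --- the near-independence of $\widetilde r_\alpha$ at points more than $2h$ apart --- and carrying this through uniformly over $\alpha\in A$. The delicate part is the $2h$ disjoint-support bookkeeping for the covariance, which is precisely the device on which Theorem \ref{test} rests.
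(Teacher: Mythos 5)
Your proof is correct and arrives at exactly the same final bound $O_P(n^{-1+c}h^{-d/4})$ as the paper, but by a somewhat different technical route. The paper first replaces the random denominator $\sum_i K((x-X_i)/h)\,f_{\varepsilon_\alpha|X}(0|X_i)$ by its deterministic counterpart $n\,g_{h,\alpha}(x)$ and then applies Fubini, so that your $V_\alpha$ collapses to $n^{-1}\sum_i \rho_{h,\alpha}(X_i)\{I(\varepsilon^\Delta_{i,\alpha}\le 0)-\alpha\}$ with $\rho_{h,\alpha}(v)=\int_{R_X} K((x-v)/h)\,\gamma_\alpha(x)w(x,\alpha)\,g_{h,\alpha}(x)^{-1}\,dx$ a bounded weight; viewed as a weighted empirical distribution function at $0$, the centred part is immediately $O_P(n^{-1/2})$ and the bias $F_{\varepsilon^\Delta_\alpha}(0)-\alpha=O(n^{-1/2}h^{-d/4})$ supplies the dominant contribution. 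You instead keep the double integral, condition on the design, and obtain the same two pieces: the conditional mean yields the $n^{-1/2}h^{-d/4}\Delta_\alpha$ bias, and the conditional covariance, which vanishes for $\|x-x'\|>2h$ and is $O((nh^d)^{-1})$ on the band of measure $O(h^d)$, integrates to a variance of order $n^{-1}$. The two computations are equivalent --- integrating your covariance over the band is precisely the variance of the paper's weighted sum --- but the Fubini step makes the $n^{-1/2}$ fluctuation rate immediate (a normalized sum of conditionally independent bounded summands) and avoids the explicit $2h$-bookkeeping, while your conditioning handles the random denominator without the separate replacement step. Both arguments leave the uniformity in $\alpha$ at a comparable level of detail, and both rely in the final step on $n^{c}h^{d/4}\to 0$ for $c$ small enough, which the paper likewise invokes without further comment.
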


\begin{proof}[Proof of Lemma \ref{lem7}]
Write
\begin{eqnarray}
T_{n5} &=& 2\int_A \int_{R_X}   {\sumi K\Big(\frac{x-X_i}{h}\Big) \{I(\varepsilon^\Delta_{i,\alpha}\leq 0) - \alpha\} \over \sumi K\Big(\frac{x-X_i}{h}\Big) f_{\varepsilon_\alpha|X}(0|X_i)}
(\widehat \theta(\alpha) - \theta_0(\alpha))^\top \gamma_\alpha(x)
w(x,\alpha) \, dx \, d\alpha \nonumber \\
&=& {2 \over n} \int_A \int_{R_X}   {\sumi K\Big(\frac{x-X_i}{h}\Big) \{I(\varepsilon^\Delta_{i,\alpha}\leq 0) - \alpha\} \over g_{h,\alpha}(x)} \nonumber \\
&& \qquad \times ( \widehat \theta(\alpha)   -  \theta_0(\alpha)) ^\top \gamma_{\alpha}(x)
w(x,\alpha) \, dx \, d\alpha + o_P((nh^{d/2})^{-1}) \nonumber
\\
&=& 2 \int_A ( \widehat \theta(\alpha)   -  \theta_0(\alpha)) ^\top {1 \over n} \sumi \rho_{h,\alpha}(X_i) \{I(\varepsilon^\Delta_{i,\alpha}\leq 0) - \alpha\} \, d\alpha + o_P((nh^{d/2})^{-1}), \label{lem7claim}
\end{eqnarray}
with $ g_{h,\alpha}(x) = E \big[K\big(\frac{x-X}{h}\big) f_{\varepsilon_\alpha|X}(0|X) \big]$ and
\begin{eqnarray*}
&&\rho_{h,\alpha}(v)  = \int_{R_X}  K\Big(\frac{x-v}{h}\Big) {\gamma_{\alpha}(x)
w(x,\alpha)   \over g_{h,\alpha}(x)}   \, dx.
\end{eqnarray*}
Using the notations $Q_{h,\alpha}(X_i) = \frac{\rho_{h,\alpha}(X_i)}{\sumj \rho_{h,\alpha}(X_j)}$, $\widehat  F_{\varepsilon_\alpha^\Delta}(y) = \sumi Q_{h,\alpha}(X_i) I(\varepsilon_{i,\alpha}^\Delta \le y)$ and $F_{\varepsilon_\alpha^\Delta}(y) = P(\varepsilon_\alpha^\Delta \le y)$, we have that
\begin{eqnarray*}
&& {1 \over n} \sumi \rho_{h,\alpha}(X_i) \{I(\varepsilon^\Delta_{i,\alpha}\leq 0) - \alpha\} \\
&& = \Big[\widehat  F_{\varepsilon_\alpha^\Delta}(0) - \alpha \Big] \Big({1 \over n} \sumi \rho_{h,\alpha}(X_i)\Big) \\
&& = \Big[\widehat  F_{\varepsilon_\alpha^\Delta}(0) - F_{\varepsilon_\alpha^\Delta}(0)\Big] \Big({1 \over n} \sumi \rho_{h,\alpha}(X_i)\Big) + \Big[F_{\varepsilon_\alpha^\Delta}(0) - \alpha \Big] \Big({1 \over n} \sumi \rho_{h,\alpha}(X_i)\Big) \\
&& = O_P(n^{-1/2}) + O_P(n^{-1/2} h^{-d/4}),
\end{eqnarray*}
uniformly in $\alpha \in A$, and hence the statement of the lemma holds because of (\ref{lem7claim}) and (B5).
\end{proof}

\begin{lemma} \label{lem8}
Suppose the assumptions of Theorem \ref{test} are satisfied. Then,
\begin{eqnarray*}
T_{n6} = o_P((nh^{d/2})^{-1}).
\end{eqnarray*}
\end{lemma}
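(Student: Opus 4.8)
The plan is to bound $T_{n6}$ directly. In contrast to the cross terms $T_{n4}$ and $T_{n5}$, the term
$$T_{n6}=\int_A\int_{R_X}\big[(\widehat\theta(\alpha)-\theta_0(\alpha))^\top\gamma_\alpha(x)\big]^2 w(x,\alpha)\,dx\,d\alpha$$
contains no stochastic smoothing object such as $\widetilde r_\alpha$ or $\widehat r^0_\alpha$; it is a pure quadratic form in the parametric estimation error $\widehat\theta(\alpha)-\theta_0(\alpha)$. Consequently no cancellation or fine analysis of a kernel average is needed, and a crude deterministic bound on the integrand, combined with the uniform rate of $\widehat\theta(\alpha)$, should already give the claim.

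First I would bound the integrand pointwise in $(x,\alpha)$ by Cauchy--Schwarz,
$$\big[(\widehat\theta(\alpha)-\theta_0(\alpha))^\top\gamma_\alpha(x)\big]^2\le \|\widehat\theta(\alpha)-\theta_0(\alpha)\|^2\,\|\gamma_\alpha(x)\|^2.$$
By (B1) the support $R_X$ is compact and $A$ is a closed subinterval of $(0,1)$, hence compact, so the continuity of $\gamma_\alpha(x)$ and $w(x,\alpha)$ assumed in (B4) makes both functions uniformly bounded on $A\times R_X$. Taking the supremum of $\|\widehat\theta(\alpha)-\theta_0(\alpha)\|$ over $\alpha$ outside the integral and integrating the bounded weight over the finite region $A\times R_X$ then yields
$$T_{n6}\le C\Big(\sup_{\alpha\in A}\|\widehat\theta(\alpha)-\theta_0(\alpha)\|\Big)^2=O_P(n^{-1+2c}),$$
where the last equality uses the uniform rate $\sup_{\alpha\in A}\|\widehat\theta(\alpha)-\theta_0(\alpha)\|=O_P(n^{-1/2+c})$ from (B5).

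It then remains only to verify the rate comparison $n^{-1+2c}=o(n^{-1}h^{-d/2})$, that is $n^{2c}h^{d/2}\to0$, which holds for $c$ small enough by the bandwidth condition in (B3). This is precisely the bookkeeping already carried out at the end of the proof of Lemma \ref{lem7} for $T_{n5}$; in fact, since $T_{n5}=O_P(n^{-1+c}h^{-d/4})$ and $T_{n6}=O_P(n^{-1+2c})$, the bound on $T_{n6}$ is dominated by that on $T_{n5}$ whenever $n^{c}\le h^{-d/4}$, so the conclusion $T_{n6}=o_P((nh^{d/2})^{-1})$ follows a fortiori. I do not anticipate a genuine obstacle: the only delicate point is this final rate comparison, and it is identical in nature to the one already resolved for $T_{n5}$, depending only on (B3) and the freedom to take $c$ small.
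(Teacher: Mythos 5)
Your argument is exactly the paper's: the authors dispose of $T_{n6}$ in one line by noting that (B5) gives $T_{n6}=O_P(n^{-1+2c})=o_P((nh^{d/2})^{-1})$ for $c$ small enough, which is precisely your Cauchy--Schwarz bound plus the rate comparison. Your write-up just makes explicit the uniform boundedness of $\gamma_\alpha$ and $w$ on the compact set $A\times R_X$ and the final bookkeeping, so it is correct and essentially identical to the paper's proof.
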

\begin{proof}[Proof of Lemma \ref{lem8}]
The statement of the lemma follows from (B5).

\end{proof}

\begin{lemma} \label{lem9}
Suppose the assumptions of Theorem \ref{test} are satisfied. Then,
\begin{eqnarray*}
n h^{d/2}T_{n7} - b_{h,A} \stackrel{d}{\rightarrow} N(D_A,V_A).\end{eqnarray*}
\end{lemma}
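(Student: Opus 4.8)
The plan is to isolate from $\widetilde r_\alpha(x)$ a deterministic drift and a centered stochastic part, show that the drift produces the mean $D_A$, and then recognise that the stochastic part, after subtracting the bias $b_{h,A}$, is a degenerate $U$-statistic obeying a central limit theorem with variance $V_A$. First I would write $I(\varepsilon^\Delta_{i,\alpha}\le 0)-\alpha = \xi_{i,\alpha} + \{F_{\varepsilon_\alpha|X}(-n^{-1/2}h^{-d/4}\Delta_\alpha(X_i)|X_i)-\alpha\}$, where $\xi_{i,\alpha} = I(\varepsilon^\Delta_{i,\alpha}\le 0) - P(\varepsilon^\Delta_{i,\alpha}\le 0|X_i)$ satisfies $E[\xi_{i,\alpha}|X_i]=0$. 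A one-term Taylor expansion of $F_{\varepsilon_\alpha|X}$ around $0$, using $F_{\varepsilon_\alpha|X}(0|X_i)=\alpha$ and (B2), turns the second bracket into $-n^{-1/2}h^{-d/4}\Delta_\alpha(X_i)f_{\varepsilon_\alpha|X}(0|X_i)+O(n^{-1}h^{-d/2})$. Inserting this into (\ref{rtilde}) and using that the resulting ratio of kernel averages tends to $\Delta_\alpha(x)$ yields $\widetilde r_\alpha(x) = n^{-1/2}h^{-d/4}\Delta_\alpha(x) + S_\alpha(x) + R_\alpha(x)$, with the centered stochastic term $S_\alpha(x) = -[\sumi K((x-X_i)/h)\xi_{i,\alpha}]/[\sumi K((x-X_i)/h)f_{\varepsilon_\alpha|X}(0|X_i)]$ and a remainder $R_\alpha$ of uniformly smaller order.

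Expanding $\widetilde r_\alpha(x)^2$ and integrating, the term $nh^{d/2}\int_A\int_{R_X}n^{-1}h^{-d/2}\Delta_\alpha^2 w$ converges to $D_A$, while the cross term $2nh^{d/2}\int_A\int_{R_X}n^{-1/2}h^{-d/4}\Delta_\alpha S_\alpha w = 2n^{1/2}h^{d/4}\int_A\int_{R_X}\Delta_\alpha S_\alpha w$ is $O_P(h^{d/4})=o_P(1)$, since $\int_A\int_{R_X}\Delta_\alpha S_\alpha w$ is an average of $n$ i.i.d.\ centered bounded variables and hence $O_P(n^{-1/2})$. It therefore remains to analyse $nh^{d/2}\int_A\int_{R_X}S_\alpha(x)^2 w(x,\alpha)\,dx\,d\alpha$. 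Here I would replace the random denominator by its expectation $\approx nh^d f_X(x)f_{\varepsilon_\alpha|X}(0|x)$ (the relative error is $O_P((nh^d)^{-1/2})$ and, using the moment estimates behind Lemma \ref{lem4}, contributes only lower-order terms), and split the numerator square into diagonal ($i=j$) and off-diagonal ($i\ne j$) parts. For the diagonal part, $E[\xi_{i,\alpha}^2|X_i]=\alpha(1-\alpha)+O(n^{-1/2}h^{-d/4})$ together with $\int K^2 = K^{(2)}(0)$ shows its conditional mean equals $b_{h,A}$ up to an error of order $n^{-1/2}h^{-3d/4}=o(1)$, where $nh^{3d/2}\to\infty$ from (B3) is used, while its variance is $O((nh^d)^{-1})=o(1)$; thus the diagonal contributes precisely the centering $b_{h,A}$.

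The off-diagonal part, after the $x$-integration $\int K((x-X_i)/h)K((x-X_j)/h)g(x)\,dx\approx h^d g(X_i)K^{(2)}((X_i-X_j)/h)$, is the completely degenerate $U$-statistic
\[
W_n = \frac{1}{nh^{d/2}}\sum_{i\ne j}\int_A \frac{w(X_i,\alpha)}{f_X(X_i)^2\, f_{\varepsilon_\alpha|X}(0|X_i)^2}\,K^{(2)}\!\Big(\frac{X_i-X_j}{h}\Big)\,\xi_{i,\alpha}\xi_{j,\alpha}\,d\alpha ,
\]
whose kernel is degenerate because $E[\,\cdot\,|Z_i]=E[\,\cdot\,|Z_j]=0$, writing $Z_i=(X_i,\{\xi_{i,\alpha}\}_{\alpha\in A})$. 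By degeneracy, only index pairs with $\{k,l\}=\{i,j\}$ contribute to $\mathrm{Var}(W_n)$, so that, with $E[\xi_{i,\alpha}\xi_{i,\beta}|X_i]=\min(\alpha,\beta)-\alpha\beta$ and $\int (K^{(2)})^2 = K^{(4)}(0)$, one obtains $\mathrm{Var}(W_n)\to V_A$; the factor $4$ and the restriction $\alpha<\beta$ arise from symmetrising the $(\alpha,\beta)$-integral and from the two ways of matching the pair.

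The final and main step is the central limit theorem for $W_n$. I would invoke a CLT for degenerate $U$-statistics with $n$-dependent kernels, for instance de Jong's (1987) theorem for generalised quadratic forms or a martingale-difference decomposition along $\sigma(Z_1,\dots,Z_k)$, for which it suffices that $\mathrm{Var}(W_n)$ converges and that the fourth moment matches the Gaussian one, $E[W_n^4]/\mathrm{Var}(W_n)^2\to 3$, together with a negligible maximal-influence condition. I expect this fourth-moment verification to be the hard part: because $K^{(2)}((X_i-X_j)/h)$ vanishes unless $\|X_i-X_j\|\le 2h$, each observation interacts with only $O(nh^d)$ others and the effective number of asymptotically independent contributions is of order $h^{-d}$ — this is precisely the $2h$-independence and the block structure $I_{jk},I_{jk}^*$ exploited earlier — so the combinatorial counting of index tuples reduces to showing that tuples which are not pairwise matched are of negligible order. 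The remaining technical burden, namely controlling the denominator replacement, the Taylor remainder $R_\alpha$, and the $\Delta$-perturbation uniformly in $\alpha\in A$ and $x\in R_X$, is routine given the moment bounds in the proof of Lemma \ref{lem4}, again under the assumption $nh^{3d/2}\to\infty$.
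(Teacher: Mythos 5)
Your proposal is correct and follows essentially the same route as the paper's proof: split off the deterministic drift (which yields $D_A$), show the drift--noise cross term is negligible, separate the diagonal terms of the squared stochastic part (which yield the centering $b_{h,A}$) from the off-diagonal ones, and apply de Jong's (1987) CLT for degenerate quadratic forms with the same variance, maximal-influence and fourth-moment ($\to 3$) verifications. The only cosmetic difference is that you center the indicators at $P(\varepsilon^\Delta_{i,\alpha}\le 0\,|\,X_i)$, whereas the paper centers at $\alpha$ and disposes of the resulting extra cross term $T_{n73}$ separately, as in H\"ardle and Mammen (1993).
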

\begin{proof}[Proof of Lemma \ref{lem9}]  The proof is very similar to the proof of e.g.\ Proposition 1 in
H\"ardle and Mammen (1993). Write
\begin{eqnarray*}
T_{n7} &=& n^{-2} \sum_{i,j} \int_A \int_{R_X} K\Big(\frac{x-X_i}{h}\Big) K\Big(\frac{x-X_j}{h}\Big) \{I(\varepsilon^\Delta_{i,\alpha}\leq 0) - \alpha\} \{I(\varepsilon^\Delta_{j,\alpha}\leq 0) - \alpha\} \\
&& \qquad \times \widehat g_\alpha (x)^{-2} w(x,\alpha) \, dx \, d\alpha,
\end{eqnarray*}
where $\widehat g_\alpha (x)= n^{-1} \sumi K\Big(\frac{x-X_i}{h}\Big) f_{\varepsilon_\alpha|X}(0|X_i)$.  By writing $I(\varepsilon^\Delta_{i,\alpha}\leq 0) - \alpha = \big[I(\varepsilon^\Delta_{i,\alpha}\leq 0) - I(\varepsilon_{i,\alpha}\leq 0)\big] + \big[I(\varepsilon_{i,\alpha}\leq 0) - \alpha\big]$, we can decompose $T_{n7}$ into $T_{n7} = T_{n71} + T_{n72} + 2T_{n73}$.  As in H\"ardle and Mammen (1993), $T_{n73}$ is negligible.  Straightforward calculations show that $T_{n71} = (n h^{d/2})^{-1} (D_A + o_P(1))$.  Indeed,
\begin{eqnarray*}
&& E(T_{n71}|X_1,\ldots,X_n) \\
&& = n^{-2} \sum_{i,j} \int_A \int_{R_X} K\Big(\frac{x-X_i}{h}\Big) K\Big(\frac{x-X_j}{h}\Big) \big[F_{\varepsilon_\alpha|X}(-n^{-1/2} h^{-d/4} \Delta_\alpha(X_i)|X_i) - F_{\varepsilon_\alpha|X}(0|X_i)\big]  \\
&& \qquad \times \big[F_{\varepsilon_\alpha|X}(-n^{-1/2} h^{-d/4} \Delta_\alpha(X_j)|X_j) - F_{\varepsilon_\alpha|X}(0|X_j)\big] \widehat g_\alpha (x)^{-2} w(x,\alpha) \, dx \, d\alpha \\
&& = n^{-2} \sum_{i,j} \int_A \int_{R_X} K\Big(\frac{x-X_i}{h}\Big) K\Big(\frac{x-X_j}{h}\Big) f_{\varepsilon_\alpha|X}(0|X_i) f_{\varepsilon_\alpha|X}(0|X_j) \\
&& \qquad \times n^{-1} h^{-d/2} \Delta_\alpha(X_i) \Delta_\alpha(X_j) \widehat g_\alpha (x)^{-2} w(x,\alpha) \, dx \, d\alpha \: (1+o_P(1)) \\
&& = n^{-1} h^{-d/2} \int_A \int_{R_X} \Delta_\alpha^2(x) w(x,\alpha) \, dx \, d\alpha \: (1+o_P(1)) \\
&& = n^{-1} h^{-d/2} D_A \: (1+o_P(1)).
\end{eqnarray*}
Next, write $T_{n72} = T_{n72a}+ T_{n72b}$ with
\begin{eqnarray*}
T_{n72a} &=& {1 \over n^2} \sumi U_{nii}, \\
T_{n72b} &=& {1 \over n^2} \sum_{i\not = j} U_{nij},
\end{eqnarray*}
where
\begin{eqnarray*} U_{nij}&=& \int_A \int_{R_X}  K\Big(\frac{x-X_i}{h}\Big)K\Big(\frac{x-X_j}{h}\Big) \{I(\varepsilon_{i,\alpha}\leq 0) - \alpha\} \{I(\varepsilon_{j,\alpha}\leq 0) - \alpha\} \\
&& \qquad \times \widehat g_\alpha (x)^{-2} w(x,\alpha) \, dx \, d\alpha.
\end{eqnarray*}
By calculating its mean and variance it can be checked that $n h^{d/2} T_{n72a} = b_{h,A} + o_P(1)$. Thus for the lemma it remains to check that $n h^{d/2} T_{n72b} \stackrel{d}{\rightarrow} N(0,V_A)$. For the proof of this claim one can proceed as in H\"ardle and Mammen (1993) and apply the central limit theorem for U-statistics of de Jong (1987).
For this purpose one has to verify that
$n^2 h^{d} \mbox{Var}(T_{n72b}) \to V_A$, $\max_{1\leq i \leq n} \sum_{j=1}^n \mbox{Var}(U_{nij}) / \mbox{Var}(T_{n72b}) \to 0$ and
$E[T_{n72b}^4] / (\mbox{Var}(T_{n72b}))^2 \linebreak \to 3$. This can be done by straightforward but tedious calculations.
 \end{proof}

\vspace*{.3cm}

\noindent
\begin{proof}[Proof of Theorem \ref{test}]  The theorem follows immediately from Lemmas \ref{lem3}--\ref{lem9}. Lemmas \ref{lem3}--\ref{lem8} imply the negligibility of the terms $T_{n1}$, .., $T_{n6}$. Lemma \ref{lem9} shows the asymptotic normality of  $n h^{d/2}T_{n7}$.
\end{proof}

\section{Proof of Theorem \ref{testboot1}} The theorem can be shown by verification of the conditions of the central limit theorem for U-statistics of de Jong (1987), in the same way as was done in the proof of Lemma \ref{lem9}. The crucial point in the proof is to note that $I(U_i \le \alpha)$ has the same distribution as $I(\varepsilon_{i,\alpha} \le 0)$, and hence the calculations in the proof of Lemma \ref{lem9} go through in this proof.

\section*{Acknowledgments}

Research of the first author was prepared within the framework of a subsidy granted to the HSE by the Government of the Russian Federation for the implementation of the Global Competitiveness Program and it was supported by Deutsche Forschungsgemeinschaft
through the Research Training Group RTG 1953. The research of the second author was supported by the European Research Council (2016-2021, Horizon 2020 / ERC grant agreement No.\ 694409), and by IAP research network
grant nr.\ P7/06 of the Belgian government (Belgian Science Policy). 

\section*{References}

\begin{description}

\item[] Ait-Sahalia, Y., Fan, J. and Peng, H. (2009).
Nonparametric transition-based tests for diffusions.
{\it Journal of the American Statistical Association} {\bf 104} 1102-1116.

\item[] Angrist, J., Chernozhukov, V. and Fernández-Val, I. (2006).
Quantile regression under misspecification, with an application to the U.S.\ wage structure.
{\it Econometrica} {\bf 74} 539-563.

\item [] Bhattacharya,~R.~and~Rao,~R.\
(1976). {\it Normal Approximations and Asymptotic Expansions.}
John Wiley \& Sons, New York.


\item[] Bierens, H.J. and Ginther, D. (2001). Integrated conditional moment testing of quantile regression models. {\it Empirical Economics} {\bf 26} 307-324.

\item[] Chaudhuri, P. (1991). Nonparametric estimates of regression quantiles and their local Bahadur representation. {\it Annals of Statistics} {\bf 19} 760-777.

\item[] Conde-Amboage, M., S\'anchez-Sellero, C. and Gonz\'alez-Manteiga, W. (2015). A lack-of-fit test for quantile regression models with high-dimensional covariates. {\it Computational Statistics and Data Analysis} {\bf 88} 128-138.

\item[] De Backer, M., El Ghouch, A. and Van Keilegom, I. (2017). Semiparametric copula quantile regression for complete or censored data. {\it Electronic Journal of Statistics} {\bf 11} 1660-1698. 

\item[] de Jong, P.\ (1987). A central limit theorem for generalized quadratic forms. {\it Probability Theory and Related Fields} {\bf 75} 261-277.

\item[] Dette, H.\ and Sprekelsen, I. (2004). Some comments on specification tests in nonparametric absolutely regular processes. {\it Journal of Time Series Analysis} {\bf 25} 159-172.

\item[] El Ghouch, A.\  and Van Keilegom, I.\ (2009). Local linear quantile regression with dependent censored data. {\it Statistica Sinica} {\bf 19} 1621-1640.

\item[] Fan, J., Zhang, C. and Zhang, J. (2001).
Generalized likelihood ratio statistics and Wilks phenomenon.
{\it Annals of Statistics} {\bf 29} 153-193.

\item[] Gao, J. and Hong, Y.\ (2008). Central limit theorems for generalized U-statistics with applications in nonparametric specification. {\it  Journal of Nonparametric Statistics} {\bf  20} 61-76.

\item[] Gonz\'alez-Manteiga, W.\ and Cao-Abad, R.\ (1993).
Testing the hypothesis of a general linear model
using nonparametric regression estimation. {\it Test} {\bf 2} 161-188.

\item[] Guerre, E. and Lavergne, P. (2002). Optimal minimax rates for nonparametric specification testing in regression models. {\it Econometric Theory} {\bf 18} 1139-1171.

\item[] Guerre, E.\ and Sabbah, C. (2012). Uniform bias study and Bahadur representation for local polynomial estimators of the conditional quantile function. {\it Econometric Theory} {\bf 28} 87-129.

\item[] Haag, B. (2008).
Non-parametric regression tests using dimension reduction techniques. {\it Scandinavian Journal of Statistics}
{\bf 35} 719-738.

\item[] H\"{a}rdle, W.\ and Mammen, E.\ (1993). Testing parametric
versus nonparametric regression. {\it Annals of Statistics} {\bf 21} 1926-1947.

\item[] He, X. and Ng, P. (1999). Quantile splines with several covariates. {\it Journal of Statistical
Planning and Inference} {\bf 75} 343-352.

\item[] He, X., Ng, P. and Portnoy, S. (1998). Bivariate quantile smoothing splines. {\it Journal
of the Royal Statistical Society - Series B} {\bf 60} 537-550.

\item[] He, X. and Zhu, L.-X. (2003).
A lack of fit test for quantile regression.
{\it Journal of the American Statistical Association} {\bf 98} 1013-1022.

\item[] Hjellvik, V., Yao, Q.\ and Tj\o stheim, D.\ (1998). Linearity testing using local polynomial
approximation. {\it Journal of Statistical Planning and Inference} {\bf 68} 295-321.

\item[] Hoderlein, S. and Mammen, E. (2009). Identification and estimation of local average derivatives in non-separable models without monotonicity. {\it Econometrics Journal} {\bf 12} 1-25.

\item[] Hong, S.Y. (2003). Bahadur representation and its applications for local polynomial estimates in non-parametric M-regression. {\it Journal of Nonparametric Statistics}  {\bf15} 237-251.

\item[] Horowitz, J.L. and Spokoiny, V.G. (2002).
An adaptive, rate-optimal test of linearity for median regression models.
{\it Journal of the American Statistical Association} {\bf 97} 822-835.

\item[] Ingster, Y.I. (1993).
Asymptotically minimax hypothesis testing for nonparametric alternatives I, II, III.
{\it Math.\ Methods of Statistics} {\bf 2} 85-114, 171-189, 249-268.

\item[] Koenker, R. (2005).
{\it Quantile Regression.}
Cambridge University Press.

\item[] Koenker, R. and Machado, J.A.F. (1999).
Goodness of fit and related inference processes for quantile regression.
{\it Journal of the American Statistical Association} {\bf 94} 1296-1310.

\item[] Koenker, R. and Xiao, Z. J. (2002).
Inference on the quantile regression process.
{\it Econometrica} {\bf 70} 1583-1612.

\item[] Kong, E., Linton, O. and Xia, Y. (2010).
Uniform Bahadur representation for local polynomial estimates of M-regression and its application to the additive model.
{\it Econometric Theory} {\bf 26} 1529-1564.

\item[] Kreiss, J.P., Neumann, M.H.\ and Yao, Q.\ (2008).
Bootstrap tests for simple structures in nonparametric time series regression.
{\it Statistics and its Interface} {\bf 1} 367-380.

\item[] Lee, K.L. and Lee, E.R. (2008).
Kernel methods for estimating derivatives of conditional quantiles.
{\it Journal of the Korean Statistical Society} {\bf 37} 365-373.

\item[] Leucht, A. (2012).
Degenerate $U$- and $V$-statistics under weak dependence: Asymptotic theory and bootstrap consistency.
{\it Bernoulli} {\bf 18} 552-585.

\item[] Li, Q. and Racine, J.S. (2008).
Nonparametric estimation of conditional CDF and quantile functions with mixed categorical and continuous data.
{\it Journal of Business \& Economic Statistics} {\bf 26} 423-434.

\item[] Rothe, C. and Wied, D. (2013).
Misspecification testing in a class of conditional distributional models.
{\it Journal of the American Statistical Association} {\bf 108} 314-324.

\item[] Su, L. and White, H.L. (2012).
Conditional independence specification testing for dependent processes with local polynomial quantile regression.
{\it Advances in Econometrics} {\bf 29} 355-434.

\item[] Truong, Y.K. (1989).
Asymptotic properties of kernel estimators based on local medians.
{\it Annals of Statistics } {\bf 17} 606-617.

\item[] Volgushev, S., Birke, M., Dette, H. and Neumeyer, N. (2013).
Significance testing in quantile regression.
{\it Electronic Journal of Statistics} {\bf 7} 105-145.

\item[]   Yu, K.  and Jones,  M. C.   (1998).
Local Linear Quantile Regression.
{\it Journal of the American Statistical Association} {\bf 93} 228-237.

\item[] Zheng, J.X. (1996).
A consistent test of a functional form via nonparametric estimation techniques.
{\it Journal of Econometrics} {\bf 75} 263-289.

\item[] Zheng, J.X. (1998).
A consistent nonparametric test of parametric models under conditional quantile regressions.
{\it Econometric Theory } {\bf 14} 223-238.

\end{description}

\end{document}